\documentclass[final,onefignum,onetabnum]{siamonline250211}
\newif\ifcombinedpaper\combinedpapertrue
\newif\ifsupplementpaper\supplementpaperfalse
\usepackage{amssymb,mathtools}
\usepackage{microtype}
\usepackage{enumitem}
\usepackage{booktabs}
\usepackage{graphicx}
\usepackage{placeins}
\usepackage{float}
\graphicspath{{figures/}}
\hypersetup{pdftitle={Minimax Rates for Conditional Generative Learning under Wasserstein Loss},pdfauthor={Sungwon Park, Kyoung-Sook Moon, and Hongjoong Kim}}
\allowdisplaybreaks
\newsiamthm{assumption}{Assumption}
\newsiamremark{remark}{Remark}

\newcommand{\X}{\mathcal X}
\newcommand{\Y}{\mathcal Y}
\newcommand{\Pcal}{\mathcal P}
\newcommand{\Qcal}{\mathcal Q}
\newcommand{\E}{\mathbb E}
\newcommand{\1}{\mathbf 1}

\newcommand{\TV}{\operatorname{TV}}
\newcommand{\Id}{\operatorname{Id}}
\newcommand{\KR}{\mathrm{KR}}

\headers{Minimax Conditional Generative Learning}{S. Park, K.-S. Moon, and H. Kim}
\title{Minimax Rates for Conditional Generative Learning under Wasserstein Loss}
\author{
{Sungwon Park}\thanks{Department of Mathematics, Korea University
(\email{sungwpark@korea.ac.kr},
 \email{hongjoong@korea.ac.kr}).}
\and
{Kyoung-Sook Moon}\thanks{Department of Finance and Big Data, Gachon University
(\email{ksmoon@gachon.ac.kr}).}
\and
{Hongjoong Kim}\footnotemark[1]
}
\date{}

\newcommand{\AppendixRef}[1]{%
  \ifcombinedpaper Appendix~\ref{#1}%
  \else\ifsupplementpaper Section~\ref{#1}%
  \else Supplementary Material, Section~\ref{#1}%
  \fi\fi}

\begin{document}
\maketitle

\begin{abstract}
	We study minimax rates for conditional generative learning under integrated Wasserstein loss, with H\"older continuous conditional laws and an unrestricted covariate distribution. We evaluate unrestricted generators through their conditional output distributions. For expected Wasserstein loss, the minimax rate agrees with the rate under uniform covariates and follows from balancing localization bias against unconditional distribution estimation error. Higher moments of the loss can instead be dominated by unobserved covariate regions, which force an additional risk contribution of order $n^{-1}$, where $n$ is the sample size. An adaptive localization argument gives matching upper bounds. The results cover unrestricted response laws and possibly vanishing Besov densities for $W_1$, and extend to higher Wasserstein orders under uniform response density bounds and integrability conditions. A multiscale lower bound closes the logarithmic gap in unconditional $W_1$ minimax rates in dimension two for both unrestricted distributions and Besov density classes. We also establish minimax rates for conditional Brenier maps under density, integrability, and uniform curvature conditions.
\end{abstract}

\begin{keywords}
conditional generative learning; conditional distribution estimation; Wasserstein distance; minimax rates; statistical optimal transport
\end{keywords}

\begin{MSCcodes}
62C20, 62G05, 49Q22
\end{MSCcodes}

\section{Introduction}\label{sec:intro}

Many prediction problems require a conditional distribution rather than a conditional mean, for example when the response is intrinsically random or multimodal. Such problems arise in probabilistic forecasting \cite{RaspLerch2018,SalinasEtAl2020}, simulation-based Bayesian inference \cite{PapamakariosMurray2016,GreenbergEtAl2019}, and probabilistic surrogate modeling for scientific systems \cite{BulteSchollKutyniok2025,ParkEtAl2026}. We formulate conditional generative learning as nonparametric estimation of conditional output distributions and study its minimax rates under Wasserstein loss.

Let $(X_1,Y_1),\ldots,(X_n,Y_n)$ be i.i.d. observations with $X\in[0,1]^{d_x}$ and $Y\in[0,1]^{d_y}$, and let
$$
	x\longmapsto Q_x=P_{Y\mid X=x}
$$
be a regular conditional distribution of $Y$ given $X=x$. We integrate $W_p^\kappa(\widehat Q_x,Q_x)$ with respect to the marginal law $P_X$. We impose no density, doubling, or lower mass condition on $P_X$, which may be discrete, continuous, singular, or highly nonuniform. For $p=1$ we allow unrestricted distributions and densities in Besov classes that need not be bounded away from zero. For $1<p<\infty$ we assume that the response densities are uniformly bounded away from zero and infinity, while conditional Brenier maps are treated under a separate geometric stability class.

For the uniform covariate distribution, the benchmark rate follows from the usual local bias--variance balance. Pooling observations over a covariate neighborhood of diameter $h$ incurs H\"older error of order $h^\beta$, while the effective local sample size is $nh^{d_x}$. The stochastic term is therefore the minimax error for estimating one distribution from $nh^{d_x}$ observations. Balancing these terms defines $A_n(\tau)$, with $\tau=0$ for unrestricted distributions and $\tau>0$ for densities in a Besov class.

Our first result shows that $A_n(\tau)$ remains sharp for integrated Wasserstein loss over all Borel covariate marginals. Weighting local errors by cell probability allows them to be summed uniformly over $P_X$. Higher moments behave differently: an atom of mass of order $n^{-1}$ is unobserved with probability bounded away from zero, forcing an additional $n^{-1}$ contribution to the integrated $W_p^\kappa$ risk. The resulting rate is $A_n(\tau)^\kappa\vee n^{-1}$.

For $p>1$, a comparison with a negative Sobolev norm gives the local distribution estimation rate under uniform response density bounds. The corresponding bounds extend to every fixed finite moment. The choices $\kappa=1$ and $\kappa=p$ give Wasserstein loss and its $p$th power, respectively.

The conditional results rely on sharp unconditional distribution estimation rates. We prove the missing $W_1$ lower bound of order $\sqrt{\log m/m}$ for unrestricted distributions and Besov density classes in dimension two. A single resolution gives only $m^{-1/2}$; the logarithmic factor comes from combining perturbations across a logarithmic number of admissible resolutions.

A conditional generator transforms a reference noise law into a conditional response law. We also study conditional Brenier maps, the quadratic optimal transport maps from normalized Lebesgue measure $\rho$ on the response cube. Uniform curvature bounds transfer squared $W_2$ error for the target distribution to squared $L^2(\rho)$ map error. A construction based on inverse potentials gives the matching lower bound, yielding the minimax map rate $A_n(\tau)^2$.

\subsection{Related work}

Existing minimax results for conditional density and distribution estimation use several notions of loss and regularity. Efromovich \cite{Efromovich2007} develops minimax theory for conditional density estimation under fixed and random designs, and Brunel, Comte, and Lacour \cite{BrunelComteLacour2010} study conditional distribution functions. Li, Neykov, and Balakrishnan \cite{LiNeykovBalakrishnan2022} obtain minimax rates without a marginal density assumption under total variation regularity in the covariate, while Bilodeau, Foster, and Roy \cite{BilodeauFosterRoy2023} derive rates from empirical Hellinger entropy. This setting is also related to distribution-free nonparametric regression \cite{GyörfiKohlerKrzyzakWalk2002}; for example, Kohler and Krzy\.zak \cite{KohlerKrzyzak2013} show that irregular random designs can create an $n^{-1}$ barrier in a noiseless interpolation problem.

The closest Wasserstein results differ mainly in their assumptions on the covariate and response distributions. Dombry, Modeste, and Pic \cite{DombryModestePic2025} prove universal consistency in $W_p$ and obtain $W_1$ minimax rates for univariate responses on particular classes without restricting the covariate marginal. Pic et al. \cite{PicEtAl2023} study minimax rates under the CRPS. For multivariate responses, B\'en\'ezet, Cheng, and Jaimungal \cite{BenezetChengJaimungal2025} analyze local empirical estimators under a uniform lower bound on the covariate law relative to Lebesgue measure. Tang and Yang \cite{TangYang2025} obtain minimax rates, up to logarithmic factors, under H\"older integral probability metrics for smooth density models and distributions supported on manifolds. We impose H\"older continuity directly on $x\mapsto Q_x$ and obtain matching multivariate Wasserstein minimax bounds over all Borel covariate marginals.

Wasserstein-based conditional generators and conditional diffusion models are studied in \cite{LiuEtAl2021,TangLinYang2025,SongEtAl2026}. Kumar, Yang, and Lin \cite{KumarYangLin2025} analyze likelihood-based conditional generative models. Chemseddine et al. \cite{ChemseddineEtAl2025} study conditional Wasserstein distances. Tamano and Imaizumi \cite{TamanoImaizumi2026} derive minimax bounds for causal conditional distribution learning under a regular target design and a smooth latent pushforward model. Balakrishnan \cite{Balakrishnan2026} shows that estimating a valid transport map includes estimating its induced distribution. Our results give analogous statistical limits for conditional generators on the H\"older classes considered here.

For the unconditional distribution estimation problem we use empirical Wasserstein bounds \cite{FournierGuillin2015,SinghPoczos2018,WeedBach2019,BobkovLedoux2021} and results for smooth densities under Wasserstein and related adversarial losses \cite{SinghEtAl2018,UppalSinghPoczos2019,Liang2021,NilesWeedBerthet2022,Divol2022}. Dedecker and Merlev\`ede \cite{DedeckerMerlevede2017} study higher moments of empirical $W_1$ errors, and Bobkov and Ledoux \cite{BobkovLedoux2021} give a Fourier analytic proof of the critical two-dimensional empirical rate. Niles-Weed and Berthet \cite{NilesWeedBerthet2022} obtain the optimal polynomial rates for smooth densities and identify the role of a density lower bound for $p>1$. Their lower bound leaves a logarithmic gap in the critical two-dimensional case. Divol \cite{Divol2022} gives the corresponding critical upper rate for densities on smooth manifolds without boundary. Section~\ref{sec:response} establishes the matching critical lower bound for unrestricted distributions and the Besov classes considered here.

The Brenier map result builds on minimax and plugin estimation of smooth transport maps \cite{HutterRigollet2021,ManoleEtAl2024,DivolNilesWeedPooladian2025,VacherEtAl2024} and on recent stability and minimax comparison results \cite{BalakrishnanManole2026,Balakrishnan2026}. Covariate-dependent constructions appear in \cite{TabakTrigilaZhao2020,TabakTrigilaZhao2021,HosseiniHsuTaghvaei2025}, including the conditional Brenier estimator of Baptista et al. \cite{BaptistaEtAl2025} and conditional map error bounds of Al-Jarrah et al. \cite{AlJarrahEtAl2025Error}. Combining these stability results with our localization bounds gives an upper bound uniform over all $P_X$. The inverse potential construction gives the matching conditional lower bound in the stated class.

Section~\ref{sec:model-main} states the model and main results. Section~\ref{sec:conditionalization-main} proves the localization argument for unrestricted covariate distributions, Section~\ref{sec:response} gives the multiscale response lower bound, and Section~\ref{sec:brenier-main} treats conditional Brenier maps. \ifcombinedpaper The appendices contain\else The supplementary material contains\fi{} the remaining unconditional estimates, adaptive localization, bounds for higher Wasserstein orders, measurability arguments, and class verifications.

\section{Main results}\label{sec:model-main}

We define the losses and regularity classes, then state the minimax rates for conditional distributions and Brenier maps.

\subsection{Statistical model and losses}\label{subsec:model-loss}

Let $d_x,d_y\ge1$, $\X=[0,1]^{d_x}$, and $\Y=[0,1]^{d_y}$. Suppose that $(X_1,Y_1),\ldots,(X_n,Y_n)$ are i.i.d. with joint law $P$.
We denote the marginal law of $X$ by $P_X$, write $S_X=\operatorname{supp}(P_X)$, and fix a Borel regular conditional distribution
$$
	Q_x=P_{Y\mid X=x}
$$
of $Y$ given $X=x$.

For probability measures $\mu,\nu$ on $[0,1]^d$, let $\Pi(\mu,\nu)$ denote the probability measures on $[0,1]^d\times[0,1]^d$ with marginals $\mu$ and $\nu$. For $1\le p<\infty$, define
\begin{equation}\label{eq:wasserstein-intro}
	W_p^p(\mu,\nu)
	=\inf_{\pi\in\Pi(\mu,\nu)}
	\int |y-y'|^p\,\pi(dy,dy').
\end{equation}
Thus $W_p^p(\mu,\nu)$ is the minimum expected transportation cost over all couplings of $\mu$ and $\nu$. For $p=1$, the Kantorovich--Rubinstein theorem gives
\begin{equation}\label{eq:kr-duality-prelim}
	W_1(\mu,\nu)
	=\sup_{\operatorname{Lip}(f)\le1}
	\left|\int f\,d(\mu-\nu)\right|.
\end{equation}
For a finite signed measure $\sigma$ with total mass zero, we similarly set
\begin{equation}\label{eq:kr-norm-prelim}
	\|\sigma\|_{\KR}
	=\sup_{\operatorname{Lip}(f)\le1}
	\left|\int f\,d\sigma\right|.
\end{equation}
Hence $W_1(\mu,\nu)=\|\mu-\nu\|_{\KR}$. We use this dual representation in the unconditional lower bound of Section~\ref{sec:response}.

For $1\le p<\infty$, $1\le\kappa<\infty$, and an estimator $x\mapsto\widehat Q_x$, define the integrated $W_p^\kappa$ risk by
\begin{equation}\label{eq:risk}
	\mathcal R_{n,p,\kappa}(\widehat Q;P)
	=\E_{P^{\otimes n}}\!\left[
	\int_\X W_p^\kappa(\widehat Q_x,Q_x)\,P_X(dx)
	\right].
\end{equation}
For a class $\Pcal$ of joint distributions, the corresponding minimax risk is
\begin{equation}\label{eq:minimax}
	\mathcal R^\star_{n,p,\kappa}(\Pcal)
	=\inf_{\widehat Q}\sup_{P\in\Pcal}
	\mathcal R_{n,p,\kappa}(\widehat Q;P).
\end{equation}
The $\kappa$th root of the risk in \eqref{eq:risk} is the $L^\kappa$ norm of the Wasserstein error over the training sample and an independent test covariate. The infimum is over probability kernels measurable jointly in the training sample and the query covariate.

A measurable conditional generator driven by independent noise induces such a kernel. Conversely, since $\Y$ is a standard Borel space, every such kernel can be realized by a measurable function of the training sample, the query covariate, and an independent uniform random variable \cite[Lemma~3.22]{Kallenberg2002}. Thus conditional distribution estimation and conditional generative learning have the same minimax risk when generators are unrestricted and evaluated through their output laws.

For the integrated $W_p^\kappa$ risk, the cases $\kappa=1$ and $\kappa=p$ give ordinary $W_p$ loss and the usual $W_p^p$ transport cost, respectively. For $p=1$ the results hold on the broad distribution classes introduced below, whereas for $1<p<\infty$ they use the density bounds in Assumption~\ref{ass:wp-class}. All regularity assumptions on $x\mapsto Q_x$ are imposed only on $S_X$, since values of a conditional version outside the support do not affect \eqref{eq:risk}.

For conditional map estimation, let $\rho$ be normalized Lebesgue measure on $\Y$. For a measurable map $T:\Y\to\Y$, $T_\#\rho$ denotes the law of $T(Z)$ for $Z\sim\rho$. Since $\rho$ is absolutely continuous, the quadratic optimal transport map $T_x=\nabla\varphi_x$ from $\rho$ to $Q_x$ exists and is unique $\rho$-almost everywhere, with $\varphi_x$ convex \cite{Brenier1991,Santambrogio2015}. Map estimators $\widehat T_x:\Y\to\Y$ are based on the training sample and are required to be measurable jointly in the sample and covariate as $L^2(\rho)$-valued functions. Define their integrated squared $L^2(\rho)$ risk by
\begin{equation}\label{eq:map-risk-main}
	\mathcal R_n^{\mathrm{map}}(\widehat T;P)
	=\E_{P^{\otimes n}}\!\left[\int_\X
	\|\widehat T_x-T_x\|_{L^2(\rho)}^2\,P_X(dx)\right].
\end{equation}

We use $a\vee b=\max\{a,b\}$ and $t_+=\max\{t,0\}$. We write $a\lesssim b$ when $a\le Cb$ for a constant $C$ independent of both the sample size and the covariate distribution, and $a\asymp b$ when $a\lesssim b$ and $b\lesssim a$. The dimensions and model parameters are fixed in their stated ranges. Implicit constants and sample-size thresholds may depend on them, on each fixed loss exponent $\kappa$, and on the chosen wavelet system.

\subsection{Regularity classes}\label{subsec:regularity}

The assumptions separate regularity in the covariate from regularity of the response law. In all four classes below, $P_X$ ranges over all Borel probability measures on $\X$. Fix $0<\beta\le1$ and $0<L_X<\infty$. For a specified Wasserstein order $1\le p<\infty$, we use the covariate H\"older condition
\begin{equation}\label{eq:cond-holder}
	W_p(Q_x,Q_{x'})\le L_X|x-x'|^\beta,
	\qquad x,x'\in S_X.
\end{equation}
Each assumption below specifies the order $p$ and any additional regularity of the response law.

\begin{assumption}[Unrestricted response $W_1$ model]\label{ass:free-class}
	There is a Borel version $x\mapsto Q_x$ of the conditional distribution for which \eqref{eq:cond-holder} holds with $p=1$. No density or response smoothness condition is imposed on the individual distributions $Q_x$.
\end{assumption}
Denote this class by $\Pcal^{\mathrm{free}}_\beta$.

For densities with smoothness $\tau>0$, let $1\le r,v\le\infty$ and use the standard Besov space $B^\tau_{r,v}([0,1]^d)$; see \cite{Cohen2003,TriebelDomains2008}. Here $\tau$ is the smoothness index in the response variable, while $r$ and $v$ determine aggregation within and across resolutions. We use the equivalent characterization by boundary wavelets recalled in \AppendixRef{supp:guide}. For each response dimension $d$, fix $L_Y>\|1\|_{B^\tau_{r,v}([0,1]^d)}$, taking $d=d_y$ in the conditional models.

\begin{assumption}[Besov density model for $W_1$]\label{ass:class}
	There is a jointly Borel function $(x,y)\mapsto q_x(y)$ such that $Q_x=q_x\,dy$ for $x\in S_X$, \eqref{eq:cond-holder} holds with $p=1$, and
	\begin{equation}\label{eq:besov-bound}
		\sup_{x\in S_X}\|q_x\|_{B^\tau_{r,v}(\Y)}\le L_Y.
	\end{equation}
	No pointwise lower or upper bound on $q_x$ is assumed.
\end{assumption}
Denote the resulting class by $\Pcal^B_{\beta,\tau,r,v}$. For the associated unconditional problems, let $\Qcal_d^{\mathrm{free}}$ be the class of all Borel probability measures on $[0,1]^d$ and put
\begin{equation}\label{eq:smooth-response-class}
	\Qcal^B_{d,\tau,r,v}
	=\left\{q\,dy:q\ge0,\ \int q=1,\ \|q\|_{B^\tau_{r,v}}\le L_Y\right\}.
\end{equation}
For higher Wasserstein orders we assume that the conditional densities are uniformly bounded away from zero and infinity. Fix $1<p<\infty$, $\tau\ge0$, $1\le r,v\le\infty$, and constants $0<\underline q<1<\overline q<\infty$.

\begin{assumption}[$W_p$ density bounds]\label{ass:wp-class}
	There is a jointly Borel function $(x,y)\mapsto q_x(y)$ such that $Q_x=q_x\,dy$ for $x\in S_X$, \eqref{eq:cond-holder} holds for the fixed order $1<p<\infty$, and
	\begin{equation}\label{eq:wp-density-bounds}
		\underline q\le q_x(y)\le\overline q,
		\qquad x\in S_X,\quad\text{for a.e. }y\in\Y.
	\end{equation}
	When $\tau>0$, we also impose \eqref{eq:besov-bound} and require $r\ge p$ if $d_y>2$; when $\tau=0$, no Besov condition is imposed and the indices $r,v$ are ignored.
\end{assumption}
Denote this class by $\Pcal^{(p)}_{\beta,\tau,r,v}(\underline q,\overline q)$, and let $\Qcal^{(p)}_{d,\tau,r,v}(\underline q,\overline q)$ be the corresponding unconditional distribution class on $[0,1]^d$, with $r\ge p$ when $\tau>0$ and $d>2$. The lower density bound is used in the comparison with a negative Sobolev norm for $W_p$, and the upper bound controls the stochastic wavelet error. These conditions are sufficient for the argument below and are not claimed to be minimal.

For the conditional Brenier map class, fix $\tau\ge0$, $1\le r,v\le\infty$, $1<\overline q<\infty$, and $\Lambda>1$.

\begin{assumption}[Brenier map regularity]\label{ass:brenier-class}
	There is a jointly Borel function $(x,y)\mapsto q_x(y)$ such that $Q_x=q_x\,dy$ for every $x\in S_X$, \eqref{eq:cond-holder} holds with $p=2$, and
	\begin{equation}\label{eq:brenier-class-main}
		0<q_x(y)\le\overline q,
		\qquad x\in S_X,\quad\text{for a.e. }y\in\Y.
	\end{equation}
	When $\tau>0$, we also impose \eqref{eq:besov-bound} and require $r\ge2$ if $d_y>2$; when $\tau=0$, the Besov indices are ignored. In addition, $\varphi_x\in C^{1,1}(\Y)$ and $\operatorname{Lip}(\nabla\varphi_x)\le\Lambda$ uniformly in $x\in S_X$.
\end{assumption}
Denote the resulting class by $\Pcal^{\mathrm{Br}}_{\beta,\tau,r,v}(\overline q,\Lambda)$.

The parameters $\beta$ and $\tau$ play different roles. The former controls the approximation error from pooling nearby covariates, whereas the latter determines the difficulty of estimating a response distribution from a local sample.

\subsection{Conditional distribution estimation}

We begin with the unconditional distribution estimation problem. Let $a_{d,\tau}(m)$ denote the minimax $W_1$ rate for estimating a distribution on $[0,1]^d$ from $m$ observations, with $\tau=0$ for unrestricted laws and $\tau>0$ for the Besov class above. When $\tau=0$, this notation is only a convention for the unrestricted response model and does not impose a $B^0_{r,v}$ condition. We use the following expression, extended to real $m\ge1$ for localization:
\begin{equation}\label{eq:response-root-intro}
	a_{d,\tau}(m)=
	\begin{cases}
		m^{-1/2}, & d=1,\\[1mm]
		\{\log(em)/m\}^{1/2}, & d=2,\\[1mm]
		m^{-(\tau+1)/(2\tau+d)}, & d>2.
	\end{cases}
\end{equation}
The rate is parametric in dimension one, has the critical logarithmic correction in dimension two, and improves with smoothness in the response variable when $d>2$. We use this benchmark throughout the conditional analysis.

Except for the critical two-dimensional lower bound, these rates follow from existing results on empirical Wasserstein convergence and smooth density estimation. Theorem~\ref{thm:response-benchmark} in Section~\ref{sec:response} states the unconditional benchmark and establishes the critical two-dimensional lower bound; \AppendixRef{supp:w1-upper} extends the first-moment bounds to every fixed finite $L^\kappa$ risk.

Under the uniform covariate distribution, a cell of side length $h$ has expected sample count $nh^{d_x}$. Pooling within the cell incurs H\"older approximation error of order $h^\beta$, while Theorem~\ref{thm:response-benchmark} gives distribution estimation error $a_{d_y,\tau}(nh^{d_x})$. We define $A_n(\tau)$ as the common order obtained by balancing
\begin{equation}\label{eq:balancing-main}
	h^\beta\asymp a_{d_y,\tau}(nh^{d_x}).
\end{equation}
Writing
\begin{equation}\label{eq:gamma-main}
	\gamma(\tau)
	=\left\{2+\frac{d_x}{\beta}
	+\frac{(d_y-2)_+}{\tau+1}\right\}^{-1},
\end{equation}
we have
\begin{equation}\label{eq:An-main}
	A_n(\tau)=
	\begin{cases}
		n^{-\gamma(\tau)}, & d_y\ne2,\\[1mm]
		\{\log(en)/n\}^{\gamma(\tau)}, & d_y=2.
	\end{cases}
\end{equation}
For unrestricted responses, we use these definitions with $\tau=0$.

With an unrestricted covariate distribution, the localization scale remains $A_n(\tau)$, but higher moments introduce an additional term.

\begin{theorem}[Conditional $W_1$ minimax rates]\label{thm:main}\label{thm:higher-moments}
	Consider the classes defined by Assumptions~\ref{ass:free-class} and~\ref{ass:class}, respectively, with fixed model parameters. Fix $1\le\kappa<\infty$. Then, for all sufficiently large $n$,
	\begin{align}
		\mathcal R^\star_{n,1,\kappa}(\Pcal^{\mathrm{free}}_\beta)
		&\asymp A_n(0)^\kappa\vee n^{-1},\label{eq:free-higher-rate}\\*
		\mathcal R^\star_{n,1,\kappa}(\Pcal^B_{\beta,\tau,r,v})
		&\asymp A_n(\tau)^\kappa\vee n^{-1},
		\qquad \tau>0.\label{eq:higher-rate-w1}
	\end{align}
\end{theorem}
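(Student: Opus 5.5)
The plan is to prove matching upper and lower bounds separately, each built from two pieces: one giving $A_n(\tau)^\kappa$ and one giving $n^{-1}$. Since $W_1\le\sqrt{d_y}$ on $\Y$, the $W_1^\kappa$ loss is bounded. This lets us control error on poorly sampled regions by their probability mass.

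\emph{Lower bound.} For the $A_n(\tau)^\kappa$ term, take $P_X$ uniform and use Jensen, $\int W_1^\kappa\,dP_X\ge(\int W_1\,dP_X)^\kappa$. It then suffices to show the $\kappa=1$ risk is $\gtrsim A_n(\tau)$. I would do this by an Assouad-type construction. Partition $\X$ into $h^{-d_x}$ cells with $h^\beta\asymp a_{d_y,\tau}(nh^{d_x})$. On each cell, let $Q_x$ interpolate, via a H\"older bump in $x$ of amplitude $h^\beta$, between members of the unconditional hard family from Theorem~\ref{thm:response-benchmark}. That family is the multiscale family in dimension two and the standard perturbation families otherwise, with Besov norms preserved because the mixture is a convex combination. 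Each cell then carries an independent unconditional problem with about $nh^{d_x}$ observations. The H\"older constraint limits the separation to $h^\beta$, so the per-cell error is $\gtrsim h^\beta\wedge a_{d_y,\tau}(nh^{d_x})\asymp A_n(\tau)$.

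For the $n^{-1}$ term, use a two-point argument with an atom. Take $P_X=(1-1/n)\delta_{x_0}+(1/n)\delta_{x_1}$ with $|x_0-x_1|$ fixed. Let $Q_{x_1}\in\{\mu_0,\mu_1\}$ be two response laws at fixed positive $W_1$ distance, both compatible with $Q_{x_0}$ under the H\"older condition (choose $L_X|x_0-x_1|^\beta$ large enough relative to the separation), and admissible Besov densities in the density class. With probability $(1-1/n)^n\ge e^{-1}/2$ no sample hits $x_1$, and the two hypotheses then give identical data laws. The estimator therefore suffers $W_1^\kappa\gtrsim 1$ at $x_1$ under one hypothesis, weighted by mass $1/n$, giving risk $\gtrsim n^{-1}$.

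\emph{Upper bound.} I would use an adaptive localization estimator. Take a dyadic partition of $\X$ at scale $h$ with $h^\beta\asymp A_n(\tau)$. For each cell $C$ with $N_C=\#\{i:X_i\in C\}$, set $\widehat Q_x$ for $x\in C$ equal to the unconditional estimator of Theorem~\ref{thm:response-benchmark} (empirical measure or wavelet estimator) built from the $Y_i$ with $X_i\in C$, and set it to a fixed law if $N_C=0$. Condition on the $X_i$. Within the cell the responses are independent draws from $Q_{X_i}$, whose average $\bar Q_C$ lies within $L_Xh^\beta$ of $Q_x$ in $W_1$ by convexity of $W_1$ under mixtures. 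The key step is the non-i.i.d. (independent, nonidentical) version of the unconditional $L^\kappa$ bound from \AppendixRef{supp:w1-upper}, applied to $\bar Q_C$; this needs checking. It gives
\[
\E\bigl[W_1^\kappa(\widehat Q_x,Q_x)\mid X_{1:n}\bigr]\lesssim h^{\beta\kappa}+a_{d_y,\tau}(N_C)^\kappa\1\{N_C\ge1\}+\1\{N_C=0\}.
\]
Averaging over $x\sim P_X$ and the design, with $p_C=P_X(C)$ and $N_C\sim\mathrm{Bin}(n,p_C)$, the risk is bounded by
\[
h^{\beta\kappa}+\sum_C p_C\,\E\bigl[a(N_C)^\kappa\wedge 1\bigr].
\]
Two contributions remain. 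First, $\sum_C p_C(1-p_C)^n\le \sum_C p_C e^{-np_C}\lesssim \#\{C\}/n=h^{-d_x}/n$. Second, $\sum_C p_C\,a(np_C)^\kappa$, which is maximized by uniform $p_C$. The second is fine. The first, $h^{-d_x}/n$, can exceed both $A_n^\kappa$ and $n^{-1}$ when $\kappa$ is large. This is where the adaptivity enters, and it is the main obstacle.

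To handle it, I would replace the fixed scale by a data-driven multiscale tree. Refine a dyadic cell only while it contains at least about $nh_j^{d_x}\cdot(\text{threshold})$ points, so that each $x$ uses the finest ancestor cell at which the local sample count $N$ balances $h_j^\beta$ against $a(N)$. The error at $x$ then behaves like $\min_j\{h_j^\beta+a(N_{C_j(x)})\}$. Points in regions of total mass $\lesssim 1/n$ fall back to coarse cells with error $O(1)$. Their total contribution is $\lesssim n^{-1}$ times a factor $\log n$ summed over levels, and I would remove that factor by a geometric decay over levels. Concentration of $N_C$ around $np_C$ (Chernoff bounds) would be used to pass between empirical and true masses. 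I expect the accounting that shows $\int\min_j(\cdots)^\kappa\,dP_X\lesssim A_n^\kappa\vee n^{-1}$ uniformly over all $P_X$, and in particular avoids a logarithmic loss in the $n^{-1}$ term, to be the hardest step.
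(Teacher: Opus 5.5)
Your two lower bounds are essentially the paper's: the replicated hypercube under uniform $P_X$ plus Jensen, and the two-point design whose atom of mass $n^{-1}$ goes unobserved with probability bounded below. The gap is in the upper bound. It sits exactly at the step you leave open, and the remedy you propose there fails.

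Take $n$-free thresholds $T_j$ with $a(T_j)\lesssim2^{-j\beta}$, so that $2^{jd_x}T_j\asymp2^{j\beta\kappa_c}(1+j)^{\1\{d_y=2\}}$ with $\kappa_c=\kappa_c(\tau)$. Let $\mathcal L$ be the disjoint partition of first rejected cells. A query in $L\in\mathcal L$ is served by the accepted parent with error of order $h_L^\beta$, and a correctly rejected $L$ has $P_X(L)\lesssim T_{j(L)}/n$. If you count at most $2^{jd_x}$ terminal cells per level and sum, you get $n^{-1}\sum_{j\le j_*}2^{j\beta(\kappa_c-\kappa)}(1+j)^{\1\{d_y=2\}}$. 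This is fine for $\kappa\ne\kappa_c$. At $\kappa=\kappa_c$, which the theorem covers, the level terms do not decay. Summing over $j_*\asymp\log n$ levels gives $(\log n)/n$, or $(\log n)^2/n$ when $d_y=2$, which is a logarithmic factor above $A_n(\tau)^{\kappa_c}\vee n^{-1}$. No geometric decay over levels is available there.

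The missing idea is to charge by volume. Write $T_{j(L)}/n=|L|\,2^{j(L)d_x}T_{j(L)}/n$ with $|L|=h_L^{d_x}$, and use disjointness, $\sum_{L\in\mathcal L}|L|\le1$, to get
\[
\sum_{L\in\mathcal L,\ j(L)\le j_*}P_X(L)\,h_L^{\beta\kappa}
\lesssim\frac1n\max_{0\le j\le j_*}2^{j\beta(\kappa_c-\kappa)}(1+j)^{\1\{d_y=2\}}
\lesssim n^{-1}+A_n(\tau)^\kappa .
\]
This is a maximum, not a sum, over levels. Terminal cells with $j(L)>j_*$ have total mass at most one, so they contribute at most $h_*^{\beta\kappa}\asymp A_n(\tau)^\kappa$. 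Your heuristic of regions of mass $\lesssim1/n$ carrying error $O(1)$ is not the mechanism. At level $j$ the trade-off is mass $T_j/n$ against error $h_j^\beta$, and apart from misclassification the $n^{-1}$ term comes from coarse levels when $\kappa>\kappa_c$.

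Some smaller points. On the fixed grid, empty cells are not the only problem. The function $t\mapsto t\,r_\kappa(t)$ is not concave for large $\kappa$ (e.g.\ $\kappa>2$ when $d_y=1$), so $\sum_Cp_C\,r_\kappa(np_C)$ is not maximized by uniform $p_C$ and can itself be of order $h^{-d_x}/n$. The refinement rule should compare counts with $T_j$, not with anything proportional to $nh_j^{d_x}$.

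On the positive side, conditioning on $X_{1:n}$ is a legitimate substitute for the paper's sample splitting, provided the tree uses covariate counts only. The tree is then fixed given the design, and accepted cells automatically satisfy $a(N_A)\le a(T_{j(A)})\lesssim h_A^\beta$. Only false rejections ($nP_X(B)>2T_j$ but $N_B<T_j$) then need Chernoff, with expected mass $\lesssim n^{-1}\sum_j2^{jd_x}T_je^{-cT_j}\lesssim n^{-1}$. The independent-nonidentical unconditional bound you flag does hold for the empirical measure and the wavelet estimator. The averaged law stays in the convex class, and the variance and bounded-differences arguments use only independence. None of this replaces the volume argument, which is the heart of the upper bound.
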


The first term is necessary already under the uniform covariate distribution, whereas a two-point covariate distribution with an atom of mass $n^{-1}$ forces the second term. For $\kappa=1$, the additional $n^{-1}$ term is of lower order and the theorem gives the ordinary integrated $W_1$ rate $A_n(\tau)$.

Writing
\begin{equation}\label{eq:kappa-critical}
	\kappa_c(\tau)=\gamma(\tau)^{-1}
	=2+\frac{d_x}{\beta}+\frac{(d_y-2)_+}{\tau+1},
\end{equation}
the term $A_n(\tau)^\kappa$ dominates the minimax risk for $\kappa<\kappa_c(\tau)$, while $n^{-1}$ dominates for $\kappa>\kappa_c(\tau)$. The polynomial orders agree at the boundary. When $d_y=2$, the logarithmic correction gives $A_n(\tau)^{\kappa_c}=\log(en)/n$.

\begin{table}[ht]
	\centering
	\caption{Unconditional and conditional rates. The response dimension determines the critical behavior, while the covariate dimension enters through localization.}\label{tab:phase-diagram}
	\begin{tabular}{@{}lll@{}}
		\toprule
		Regime & Unconditional rate $a_{d_y,\tau}(m)$ & Conditional rate $A_n(\tau)$\\
		\midrule
		$d_y=1$ & $m^{-1/2}$ & $n^{-\beta/(2\beta+d_x)}$\\[1mm]
		$d_y=2$ & $\{\log(em)/m\}^{1/2}$ & $\{\log(en)/n\}^{\beta/(2\beta+d_x)}$\\[1mm]
		$d_y>2$, $\tau=0$ & $m^{-1/d_y}$ & $n^{-\beta/(\beta d_y+d_x)}$\\[1mm]
		$d_y>2$, $\tau>0$ & $m^{-(\tau+1)/(2\tau+d_y)}$ & $n^{-\beta(\tau+1)/[\beta(2\tau+d_y)+d_x(\tau+1)]}$\\
		\bottomrule
	\end{tabular}
\end{table}

Table~\ref{tab:phase-diagram} collects the unconditional and conditional rates. Positive Besov smoothness does not change the leading order when $d_y\le2$; for $d_y>2$ it improves the polynomial rate through the usual gain of one derivative under Wasserstein loss.

Figure~\ref{fig:dimension-phase} illustrates the conditional grid estimator with $X$ uniform on $[0,1]$ and conditional densities $q_x(y)=1+0.8\sin(2\pi x)(2y_1-1)$ on $[0,1]^d$, where $d=d_y\in\{1,2,4\}$. Panel (a) shows the integrated conditional estimation error, which includes localization bias. Panel (b) instead compares the empirical cell distribution with its pooled population law, isolating the sampling error in \eqref{eq:balancing-main}. If $K_d(n)$ is the number of covariate cells, the expected local sample count is $m_{n,d}=n/K_d(n)$. After multiplication by $\sqrt{m_{n,d}}$, the two-dimensional sampling curve increases slowly, while the one-dimensional curve remains approximately constant. These fixed-law simulations do not establish a minimax logarithmic exponent; Section~\ref{sec:response} proves the critical lower bound.

\begin{figure}[tp]
 \centering
 \includegraphics[width=0.80\textwidth]{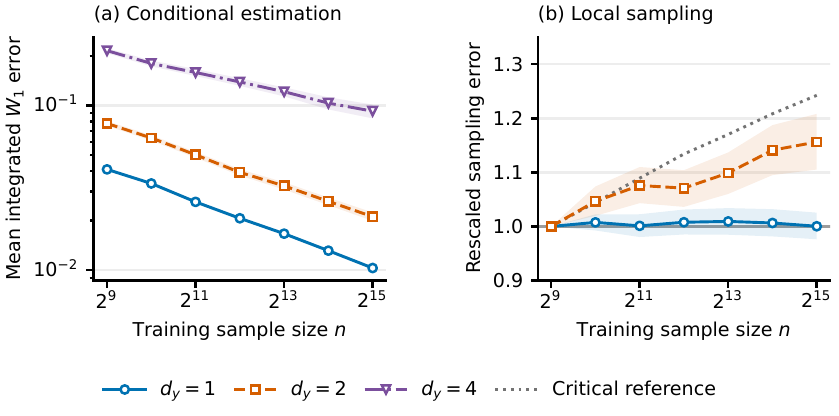}
 \small
 \caption{Conditional distribution estimation for $q_x(y)=1+0.8\sin(2\pi x)(2y_1-1)$ with $X$ uniform on $[0,1]$. Panel (a) shows mean integrated $W_1$ error of the grid empirical estimator in response dimensions $d=d_y\in\{1,2,4\}$. Panel (b) shows the mean sampling loss $W_1(\widehat Q_I,Q_I)$ for $d=1,2$, where $Q_I$ is the population law pooled over the query cell. In panel (b), each curve is multiplied by $\sqrt{m_{n,d}}$, with $m_{n,d}=n/K_d(n)$, and normalized at $n=512$. The dotted reference is $\{\log(e m_{n,2})/\log(e m_{512,2})\}^{1/2}$, and the horizontal reference is one. Shaded bands are pointwise $95\%$ bootstrap intervals incorporating numerical transport bounds. \AppendixRef{subsec:supp-dimension} gives the bandwidths, repetition counts, and evaluation procedures.}
 \label{fig:dimension-phase}
\end{figure}

The localization argument is not specific to $W_1$. For the unconditional class associated with Assumption~\ref{ass:wp-class}, the minimax rate, with $\widehat Q$ based on $m$ i.i.d. responses, is
\begin{equation}\label{eq:response-minimax-wp}
	\inf_{\widehat Q}
	\sup_{Q\in\Qcal^{(p)}_{d_y,\tau,r,v}(\underline q,\overline q)}
	\bigl\{\E_{Q^{\otimes m}} W_p^\kappa(\widehat Q,Q)\bigr\}^{1/\kappa}
	\asymp a_{d_y,\tau}(m)
\end{equation}
for all sufficiently large $m$. \AppendixRef{supp:w2-analytic} proves \eqref{eq:response-minimax-wp} for every fixed finite $\kappa$ using a one-sided comparison with a negative Sobolev norm. \AppendixRef{supp:w1-upper} gives the corresponding moment bounds for $W_1$.

\begin{theorem}[Conditional $W_p$ minimax rates]\label{thm:main-wp}
	Consider the class defined by Assumption~\ref{ass:wp-class} with fixed model parameters. Fix $1\le\kappa<\infty$. Then, for all sufficiently large $n$,
	\begin{equation}\label{eq:main-rate-wp}
		\mathcal R^\star_{n,p,\kappa}
		\bigl(\Pcal^{(p)}_{\beta,\tau,r,v}(\underline q,\overline q)\bigr)
		\asymp A_n(\tau)^\kappa\vee n^{-1}.
	\end{equation}
\end{theorem}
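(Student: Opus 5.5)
We prove Theorem~\ref{thm:main-wp} along the same lines as Theorem~\ref{thm:main}. The only input that changes is the unconditional benchmark, which is now \eqref{eq:response-minimax-wp}, valid for every fixed finite moment.

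\emph{Upper bound.} We use an adaptive localization estimator. For a dyadic partition of $\X$ into cells $I$ of side $h$, set $\widehat Q_x=\widehat Q_I$ for $x\in I$. Here $\widehat Q_I$ is the minimax unconditional estimator from \eqref{eq:response-minimax-wp}, applied to the $N_I$ responses whose covariates fall in $I$, and projected onto $\Qcal^{(p)}_{d_y,\tau,r,v}(\underline q,\overline q)$ so that the density bounds are preserved. On the event $N_I=0$ we output the uniform law. Let $\bar Q_I=P_X(I)^{-1}\int_I Q_x\,P_X(dx)$ be the pooled law. Its density stays in $[\underline q,\overline q]$, and when $\tau>0$ its Besov norm is at most $L_Y$ by convexity. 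Convexity of $W_p^p$ together with \eqref{eq:cond-holder} gives $W_p(Q_x,\bar Q_I)\lesssim h^\beta$ for $x\in I\cap S_X$.

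Conditionally on $N_I=m\ge1$, the responses in $I$ are i.i.d.\ from $\bar Q_I$. Hence
\[
W_p^\kappa(\widehat Q_x,Q_x)\lesssim h^{\beta\kappa}+W_p^\kappa(\widehat Q_I,\bar Q_I),
\]
and the second term has conditional expectation $\lesssim a_{d_y,\tau}(m)^\kappa$. When $N_I=0$ the loss is bounded by $d_y^{\kappa/2}$. Summing over cells with weights $P_X(I)$ gives
\[
\sum_I P_X(I)\,\E\, a_{d_y,\tau}\bigl(\mathrm{Bin}(n,P_X(I))\bigr)^\kappa
\quad\text{and}\quad
\sum_I P_X(I)\,(1-P_X(I))^n .
\]
The second sum is at most $\#\{\text{cells}\}/n\lesssim h^{-d_x}n^{-1}$, because $t(1-t)^n\le 1/(en)$.

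A single global $h$ loses in the higher-moment regime. We therefore use the adaptive localization of the appendices, the same device used for Theorem~\ref{thm:main}: the cell is refined down to side $h_\star$ only while the local count exceeds a threshold, and coarser cells are used otherwise. The resulting bound is $A_n(\tau)^\kappa\vee n^{-1}$. The $W_p$-specific inputs are exactly the moment bounds of \AppendixRef{supp:w2-analytic}, including the fact that they are monotone in $m$ and hold uniformly over the class. The localization accounting for the $W_1$ case then transfers verbatim.

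\emph{Lower bound.} The term $A_n(\tau)^\kappa$ comes from taking $P_X$ uniform and perturbing the uniform response density. On each of $h^{-d_x}$ covariate cells we place a bump $\psi((x-x_I)/h)$, with $h\asymp$ the solution of \eqref{eq:balancing-main}. The bump multiplies a response perturbation taken from the unconditional $W_p$ lower-bound construction at sample size $m\asymp nh^{d_x}$. Since the perturbations have small amplitude around the uniform density, the density bounds $[\underline q,\overline q]$ hold. The covariate H\"older condition in $W_p$ also holds, because on this class $W_p$ is controlled by the $W_1$ displacement up to constants (alternatively, via the negative Sobolev comparison $W_p\lesssim\|\cdot\|_{\dot H^{-1,p}}$). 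Assouad's lemma then gives the integrated $W_1\le W_p$ risk lower bound $\gtrsim A_n(\tau)$, and Jensen's inequality raises this to the power $\kappa$. The term $n^{-1}$ comes from the two-point design $P_X=(1-1/n)\delta_0+n^{-1}\delta_1$. We set $Q_0$ uniform and let $Q_1$ be one of two fixed admissible densities at $W_p$-distance of order one; the H\"older condition holds since $|0-1|=1$ and $L_X$ may be assumed large relative to this fixed distance (otherwise we shrink the separation to a constant $c$). With probability $(1-1/n)^n\ge 1/4$, the atom at $1$ is never observed. On that event no estimator can distinguish the two choices of $Q_1$, so the risk is at least $n^{-1}\cdot\tfrac14\cdot c'$.

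\emph{Main obstacle.} The delicate step is the upper bound in the regime $\kappa>\kappa_c$. There, cells with tiny $P_X(I)$ contribute $P_X(I)\,a(nP_X(I))^\kappa$, and these contributions must sum to $O(n^{-1})$ uniformly over all $P_X$. We would handle this with the cell-probability weighting, via the function $t\mapsto t\,a(nt)^\kappa$, which is increasing with $t\,a(nt)^\kappa\le n^{-1}$ when $nt\lesssim1$. That bound needs the $W_p$ moment bound to hold already for bounded $m$, which the density bounds guarantee.
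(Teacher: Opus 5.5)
Your skeleton is the paper's. The upper bound is Proposition~\ref{prop:adaptive-localization} run with the $W_p$ estimator of Proposition~\ref{prop:response-wp}, using mixture closure of the constraints and convexity of $W_p^p$ for the pooling bias. The lower bound combines the replicated hypercube under uniform $P_X$ (with $W_p\ge W_1$ and Jensen) and the two-point design with an atom of mass $n^{-1}$. Your fixed-grid diagnosis is also correct. Citing Proposition~\ref{prop:adaptive-localization}, which is already stated for $p>1$, would close the upper bound as in the paper.

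\textbf{The gap.} The argument you give for the step you call delicate would not work. For $d_y=1$, $t\,a(nt)^\kappa=n^{-\kappa/2}t^{1-\kappa/2}$ on $nt\ge1$, which is \emph{decreasing} for every $\kappa>2$. That covers the whole regime $\kappa>\kappa_c(\tau)=2+d_x/\beta$ you are addressing, and the other cases are analogous. More importantly, a bound of order $n^{-1}$ per cell, summed over up to $h^{-d_x}$ cells, just reproduces the $h^{-d_x}n^{-1}$ loss you had already identified. That loss is genuine for any rule estimating a cell only from its own data: put mass $n^{-1}$ on many cells and take the true law, constant in $x$, at fixed positive $W_p$ distance from the fallback. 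Also, extending the moment bounds to bounded $m$ has nothing to do with the density bounds; it follows from $W_p\le\sqrt{d_y}$.

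\textbf{The missing idea.} A low-mass region is estimated from its accepted coarse parent, so its error is that parent's pooling bias. The accounting then converts probability mass into Lebesgue volume.
\begin{enumerate}[label=(\arabic*)]
\item The paper builds the partition from an independent first subsample. Without splitting, the responses in a data-selected cell are not i.i.d.\ from the pooled law, so your conditional i.i.d.\ step breaks.
\item The thresholds satisfy $a(T_j)\lesssim2^{-j\beta}$ and $2^{jd_x}T_j\asymp2^{j\beta\kappa_c(\tau)}(1+j)^{\mathbf 1\{d_y=2\}}$.
\item Chernoff bounds show that misclassified cells, together with all terminal cells below a misclassified parent, have expected mass $\lesssim n^{-1}\sum_j2^{jd_x}T_je^{-cT_j}\lesssim n^{-1}$.
\item A correctly rejected terminal cell $L$ has $P_X(L)\lesssim T_{j(L)}/n$. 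Hence $P_X(L)h_L^{\beta\kappa}\lesssim n^{-1}|L|\,2^{j(L)\beta(\kappa_c-\kappa)}(1+j(L))^{\mathbf 1\{d_y=2\}}$.
\item For $\kappa>\kappa_c$, each such cell costs $\lesssim|L|/n$, and disjointness gives $O(n^{-1})$. For $\kappa\le\kappa_c$, the worst level is $j_*$, giving $A_n(\tau)^\kappa$.
\end{enumerate}

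\textbf{A smaller point in the lower bound.} Density bounds do not give $W_p\lesssim W_1$ in general. In one dimension $W_p\asymp\|F-G\|_{L^p}$ on this class, and a narrow tent in $F-G$ makes the ratio to $\|F-G\|_{L^1}$ unbounded. Use the negative Sobolev route instead. It needs $\|\nabla U_\theta\|_{L^p}\lesssim\varepsilon\sqrt{B_J}$, not an $L^2$ bound. This follows from Lemma~\ref{lem:positive-interior-wavelet} and the pointwise bound $\varepsilon\sqrt{B_J}$ on the square function, as in Proposition~\ref{prop:response-hypercube}(ii).
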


In particular, $\kappa=1$ gives integrated $W_p$ risk of order $A_n(\tau)$, while $\kappa=p$ gives integrated $W_p^p$ risk of order $A_n(\tau)^p\vee n^{-1}$.

The proof uses the localization argument of \AppendixRef{app:conditionalization}. Mixtures preserve the density and Besov constraints, and convexity of $W_p^p$ controls the pooling bias. For the lower bound, Proposition~\ref{prop:response-hypercube}(ii) controls the $W_p$ distance from each positive density in the hypercube family to the uniform law, and $W_p\ge W_1$ preserves the separation.

\subsection{Conditional Brenier map estimation}\label{subsec:brenier-result}

For the map risk \eqref{eq:map-risk-main}, Assumption~\ref{ass:brenier-class} provides uniform curvature control of the conditional Brenier maps. Lemma~\ref{lem:brenier-stability} in \AppendixRef{app:brenier} shows that, uniformly over $x\in S_X$, it implies
$$
	\lambda_0I
	\preceq D^2\varphi_x\preceq\Lambda I
	\quad\text{for a.e. }z\in\Y,
	\qquad\lambda_0=(\overline q\Lambda^{d_y-1})^{-1},
	\qquad
	q_x(y)\ge\Lambda^{-d_y}
	\quad\text{for a.e. }y\in\Y.
$$
The curvature assumption therefore supplies a uniform density lower bound.

\begin{theorem}[Conditional Brenier map minimax rate]\label{thm:brenier}
	Consider the class defined by Assumption~\ref{ass:brenier-class} with fixed model parameters. Then, for all sufficiently large $n$,
	\begin{equation}\label{eq:brenier-main}
		\inf_{\widehat T}\sup_{P\in\Pcal^{\mathrm{Br}}_{\beta,\tau,r,v}(\overline q,\Lambda)}
		\mathcal R_n^{\mathrm{map}}(\widehat T;P)
		\asymp A_n(\tau)^2.
	\end{equation}
\end{theorem}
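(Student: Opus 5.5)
The plan has two halves. For the upper bound we reduce map error to squared $W_2$ error through a stability inequality, and then reuse the localization bound of Theorem~\ref{thm:main-wp}. For the lower bound we build a hypercube family at the balancing scale of \eqref{eq:balancing-main} and use inverse potentials to turn density perturbations into map perturbations.

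\textbf{Upper bound.} By Lemma~\ref{lem:brenier-stability}, every $P$ in the class satisfies $\Lambda^{-d_y}\le q_x\le\overline q$ and $\lambda_0 I\preceq D^2\varphi_x\preceq\Lambda I$, uniformly over $x\in S_X$. Hence $\Pcal^{\mathrm{Br}}_{\beta,\tau,r,v}(\overline q,\Lambda)\subset\Pcal^{(2)}_{\beta,\tau,r,v}(\Lambda^{-d_y},\overline q)$. The main step is a stability inequality of the following type. If $T=\nabla\varphi$ is $\Lambda$-Lipschitz and $\widehat T$ is the Brenier map from $\rho$ to any $\widehat Q$, then
$$
\|\widehat T-T\|_{L^2(\rho)}^2\lesssim W_2^2(\widehat Q,Q).
$$
This follows from strong convexity of $\varphi^*$, which holds because $\nabla\varphi$ is $\Lambda$-Lipschitz; this is the known stability result of \cite{BalakrishnanManole2026}, in the Manole et al.\ style. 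One has to check that the constant needs only the regularity of the \emph{true} map, so that the estimator $\widehat Q_x$ from the localized $W_2$ construction can be arbitrary.

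Define $\widehat T_x$ as the Brenier map of $\widehat Q_x$, and verify measurability as an $L^2(\rho)$-valued function. Integrating over $P_X$ and applying the $\kappa=2$ case of the localization analysis gives
$$
\mathcal R_n^{\mathrm{map}}\lesssim A_n(\tau)^2\vee n^{-1}.
$$
We have $A_n(\tau)^2\ge n^{-1}$, because $2\gamma(\tau)\le 1$ since $\kappa_c\ge 2$. The logarithmic factor for $d_y=2$ only helps. So the $n^{-1}$ term is absorbed and the upper bound is $A_n(\tau)^2$.

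\textbf{Lower bound.} Take $P_X$ uniform and let $h$ solve \eqref{eq:balancing-main}. On each covariate cell, use a bump $\psi(x/h)$ of amplitude $h^\beta$ to modulate a response hypercube perturbation, as in Proposition~\ref{prop:response-hypercube}. This family already achieves the $W_1$ (hence $W_2$) lower bound $A_n(\tau)$ in Theorem~\ref{thm:main-wp}.

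To convert the family into maps, we parametrize directly by potentials rather than by densities. Set $\varphi_x^\omega(z)=|z|^2/2+\epsilon_x\sum_j\omega_j u_j(z)$ with small smooth $u_j$, so that $D^2\varphi$ stays in $[\Lambda^{-1/2},\Lambda^{1/2}]$ and $\nabla\varphi$ maps $\Y$ onto $\Y$. Boundary issues are handled by compact supports in the interior. The density $q_x=\det D^2\varphi_x^*\circ\ldots$ then differs from uniform at first order by the Laplacian $-\epsilon\Delta u_j$; this is the ``inverse potential'' idea. It lets us take $u_j=\Delta^{-1}$ of the density bumps, so that the densities match the hypercube of the unconditional proof up to second-order corrections.

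Three checks follow. First, the KL or Hellinger divergence between neighbors is controlled exactly as in the $W_1$ lower bound. Second, the map separation $\|\nabla u_j\|_{L^2}\epsilon$ equals the $H^{-1}$ norm of the density bump, which is of the same order as the $W_1$ separation at scale matching $a_{d_y,\tau}$. Third, the Besov, $W_2$-Hölder and $\overline q$ constraints hold.

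Assouad's lemma then gives $\mathcal R^{\mathrm{map}}\gtrsim A_n(\tau)^2$. For $d_y=2$, the logarithmic factor requires the multiscale construction of Section~\ref{sec:response} carried over to potentials. The separation there is $\sum_k$ over $\log m$ scales of orthogonal $H^{-1}$ contributions, and these add in $L^2$ for the maps because the map error is a Hilbert norm. So the squared separation is the sum over scales, giving $\log m/m$.

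\textbf{Main obstacle.} The main obstacle is this lower bound construction: keeping the nonlinear Monge--Ampère second-order terms small enough that the convexity and Lipschitz bounds, the Besov norm and the divergence estimates all survive. This is especially delicate in the critical $d_y=2$ multiscale case. I would first try amplitudes small by a constant factor and a Taylor expansion of $\det(I+\epsilon D^2u)$, bounding the remainder in $L^\infty$ by $\epsilon^2\|D^2u\|_\infty^2$.
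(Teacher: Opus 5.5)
Your upper bound is the paper's argument. It uses the inclusion of the Brenier class in $\Pcal^{(2)}_{\beta,\tau,r,v}(\Lambda^{-d_y},\overline q)$, the stability bound \eqref{eq:brenier-stability-main}, the plug-in of the $p=\kappa=2$ estimator, and absorption of $n^{-1}$. The constant $\Lambda/\lambda_0$ there uses the lower curvature bound $\lambda_0$ as well as strong convexity of $\varphi^*$; you have both from Lemma~\ref{lem:brenier-stability}, so this is harmless.

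\textbf{The gap is in the lower bound, at the testing step in the critical case $d_y=2$.} Despite the name, your chart perturbs the \emph{forward} potential, $\varphi_x=|z|^2/2+U$. The target density is then $q=[\det(I+D^2U)]^{-1}\circ(\Id+\nabla U)^{-1}$, which is not a local function of $U$.

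Flipping one coordinate at level $j$ changes the inverse map by a displacement of size $\asymp\varepsilon 2^{-2j}\|\nabla\psi_{j,k}\|_\infty\asymp\varepsilon2^{j(d_y/2-1)}$. When $d_y=2$ this is $\asymp\varepsilon$ at every level, and it shifts all finer-scale oscillations of $q$ across the support of the coarse wavelet.

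Take the cutoff of Section~\ref{sec:response} with $\tau=0$. The level-$J$ part of $q$ then has amplitude $\asymp c_0$ and wavelength $2^{-J}\asymp\varepsilon/c_0$. Two neighboring densities therefore differ by $\asymp c_0^2$ on a set of area $\asymp2^{-2j}$. This gives $H^2\gtrsim c_0^42^{-2j}$ instead of $O(\varepsilon^2)=O(1/m)$. The same computation gives $H^2\gtrsim\varepsilon^42^{4J}2^{-2j}\gg\varepsilon^2$ for every $\tau<1$.

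Consequently only the finest $O(1)$ levels remain statistically indistinguishable. The accumulation over $\asymp\log m$ levels, which produces the logarithmic factor, is lost. Your claim that the Hellinger bound holds ``exactly as in the $W_1$ lower bound'' fails exactly where you need the multiscale family.

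The remedy you propose does not address this. The obstruction is the composition with the inverse map, not the nonlinearity of the determinant. Moreover, an $L^\infty$ remainder of order $\epsilon^2\|D^2u\|_\infty^2\asymp c_0^2$ is far larger than the $O(m^{-1/2})$ $L^2$ closeness that testing requires. Testing needs difference estimates between neighbors.

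The paper's fix is to put the perturbation into the conjugate potential, which your formula $q=\det D^2\varphi^*$ already suggests. Set $S_U=\Id+\nabla U$ as the inverse Brenier map and $T_U=S_U^{-1}$, so that $q_U=\det(I+D^2U)$ exactly. Since the determinant is Lipschitz on a small matrix ball, a one-coordinate flip gives $\|q_U-q_V\|_2\lesssim\|D^2(U-V)\|_2\lesssim\varepsilon$, which is the $W_1$ testing bound.

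The map is now implicit, but two facts recover the separation:
\begin{itemize}
\item the identity $S_V(y)-S_V(T_V(S_U(y)))=\nabla V(y)-\nabla U(y)$;
\item uniform bi-Lipschitz control of $S_V$.
\end{itemize}
Together they give $\|T_U-T_V\|_{L^2(\rho)}^2\asymp\|\nabla U-\nabla V\|_2^2$ (Lemma~\ref{lem:inverse-brenier-chart}). This yields both the separation $\varepsilon^2B_J$ and the conditional $W_2$ H\"older bound.

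Alternatively, you could keep your forward chart in $d_y=2$ by capping the finest level so that $\varepsilon2^{2J}\lesssim1$. That still leaves $\asymp\log m$ levels, but it is a change to the construction that you would have to make and verify.
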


Brenier stability converts the $p=\kappa=2$ distributional rate to squared $L^2(\rho)$ map error. Since $d_x\ge1$ gives $\gamma(\tau)<1/2$, the additional $n^{-1}$ term is of lower order than $A_n(\tau)^2$. Section~\ref{sec:brenier-main} proves the matching lower bound using inverse potentials.

\section{Conditional localization with unrestricted covariate distributions}\label{sec:conditionalization-main}

The key step is to localize an unconditional distribution estimator and weight each local error by the probability of its covariate cell. This mass weighting removes the need for a lower mass condition on $P_X$ for ordinary integrated Wasserstein loss. We first give the fixed grid argument for $W_1$. For higher moments, the concavity argument for a fixed grid is no longer sufficient in every regime, and \AppendixRef{app:conditionalization} replaces it with an adaptive dyadic partition.

\subsection{Localization on a fixed grid}

Let $\Pi$ be a finite measurable partition of $\X$. For $B\in\Pi$, write
$$
	\pi_B=P_X(B),
	\qquad
	Q_B=P_{Y\mid X\in B}
$$
when $\pi_B>0$. On an occupied cell, estimate $Q_B$ from the responses whose covariates fall in $B$; if fewer than two observations fall in the cell, use a fixed admissible distribution instead. Introduce the capped rate function
$$
	r(t)=
	\begin{cases}
		1,&0\le t<1,\\
		a_{d_y,\tau}(t),&t\ge1,
	\end{cases}
$$
with $\tau=0$ for unrestricted responses.

The count in cell $B$ is binomial with mean $n\pi_B$. Concentration and monotonicity of the unconditional rate imply
\begin{equation}\label{eq:fixed-grid-cell-risk-main}
	\E W_1(\widehat Q_B,Q_B)
	\lesssim r(n\pi_B).
\end{equation}
For $x\in B\cap S_X$, convexity of $W_1$ under mixtures together with the conditional H\"older condition gives
\begin{equation}\label{eq:fixed-grid-bias-main}
	W_1(Q_B,Q_x)
	\le
	\int_B W_1(Q_u,Q_x)\,P_X(du\mid B)
	\lesssim \operatorname{diam}(B)^\beta.
\end{equation}
It follows that, for a regular partition $\Pi_h$ with $M\asymp h^{-d_x}$ cells of side length $h$,
\begin{equation}\label{eq:fixed-grid-oracle-main}
	\E\int W_1(\widehat Q_x,Q_x)\,P_X(dx)
	\lesssim
	h^\beta+\sum_{B\in\Pi_h}\pi_B r(n\pi_B).
\end{equation}
The corresponding partition inequality, including the treatment of small binomial cell counts, is proved in Proposition~\ref{prop:partition}.

To control the stochastic term, let $F(t)=t r(t)$. For each unconditional rate in \eqref{eq:response-root-intro}, $F$ is concave on $[0,\infty)$. In the power-law cases, $F(t)=t$ for $0\le t<1$ and $F(t)=t^{1-\alpha}$ for $t\ge1$, with $0<\alpha<1$. For $d_y=2$, the upper branch is $F(t)=\sqrt{t\log(et)}$, which is also concave. In each case, the branches meet continuously at $1$, and the right derivative there is no larger than the left derivative. Jensen's inequality gives
\begin{align}
	\sum_{B\in\Pi_h}\pi_B r(n\pi_B)
	&=\frac1n\sum_{B\in\Pi_h}F(n\pi_B)\notag\\
	&\le \frac{M}{n}F(n/M)
	= r(n/M).\label{eq:fixed-grid-aggregation-main}
\end{align}
Equations \eqref{eq:fixed-grid-oracle-main} and \eqref{eq:fixed-grid-aggregation-main} imply
\begin{equation}\label{eq:conditional-upper-simple-main}
	\sup_P
	\mathcal R_{n,1,1}(\widehat Q;P)
	\lesssim
	h^\beta+r(n/M),
\end{equation}
uniformly over all Borel $P_X$. At the balancing bandwidth, $n/M\asymp nh^{d_x}\to\infty$, so $r(n/M)\asymp a_{d_y,\tau}(nh^{d_x})$. Balancing the two terms gives the upper bound of order $A_n(\tau)$.

\subsection{Matching lower bound}

For the lower bound it is enough to take $P_X$ uniform. Fix $h$ and choose $M_x\asymp h^{-d_x}$ disjoint smooth covariate bumps with supports of diameter $O(h)$. Place an independent copy of the hypercube family from Proposition~\ref{prop:response-hypercube} in each bump, attenuating the perturbation by the bump height outside its plateau. With local sample size $m\asymp nh^{d_x}$, assume
$$
	a_{d_y,\tau}(m)\lesssim h^\beta.
$$
The attenuation estimate then enforces the conditional $W_1$ H\"older condition. The Hellinger bound for a change in one sign keeps the total variation distance between neighboring $n$-sample laws bounded away from one. Summing over the covariate blocks produces integrated $W_1$ separation of order $a_{d_y,\tau}(m)$. At the balancing bandwidth \eqref{eq:balancing-main}, this gives
$$
	\mathcal R_{n,1,1}^\star
	\gtrsim A_n(\tau).
$$
Proposition~\ref{prop:replication} gives the full replicated hypercube argument, completing the proof of Theorem~\ref{thm:main} for $\kappa=1$.

\subsection{Higher moments and rare covariate regions}

For fixed finite $\kappa$, the local $W_1^\kappa$ risk is of order $a_{d_y,\tau}(m)^\kappa$ in the two $W_1$ classes. Under Assumption~\ref{ass:wp-class}, the local $W_p^\kappa$ risk has the same order. The difficulty is therefore not the local estimation problem but the aggregation of these errors over cells with very different probabilities under $P_X$. The fixed grid partition inequality remains valid, but the Jensen argument used above is not sharp for all $\kappa$.

For $p>1$, cellwise mixtures preserve the density bounds and the Besov condition. If $x\in B$, joint convexity of $W_p^p$ and \eqref{eq:cond-holder} give
$$
	W_p(Q_B,Q_x)^p
	\le
	\int_B W_p(Q_u,Q_x)^p\,P_X(du\mid B)
	\lesssim \operatorname{diam}(B)^{\beta p},
$$
so the pooling bias is again of order $\operatorname{diam}(B)^\beta$. The corresponding statement for $W_1$ follows directly from mixture convexity.

The proof of Proposition~\ref{prop:adaptive-localization} uses sample splitting and an adaptive dyadic partition. The partition adapts to local sample occupancy, with the regularity parameters $\beta$ and $\tau$ fixed and known. The first subsample, of size $n_0\asymp n$, accepts a level-$j$ cell when its count is at least $T_j$. The geometrically growing thresholds in \AppendixRef{app:adaptive-localization} satisfy $a_{d_y,\tau}(T_j)\lesssim2^{-j\beta}$. A cell $B$ is misclassified if it is accepted despite $n_0P_X(B)<T_j/2$, or rejected despite $n_0P_X(B)>2T_j$. Chernoff bounds, weighted by cell probabilities and summed over the $2^{jd_x}$ cells at each level, give
$$
 \E\sum_{B\text{ misclassified}}P_X(B)
 \lesssim\frac1n\sum_{j\ge0}2^{jd_x}T_je^{-cT_j}
 \lesssim n^{-1}.
$$

The first rejected cells form a disjoint terminal partition, and estimation uses their accepted parents. A terminal cell is treated as bad when it or its parent is misclassified. Misclassified terminal cells are charged to their own masses. The terminal children assigned to any misclassified parent have total probability at most that parent's probability. Thus the expected mass of all bad terminal cells is $O(n^{-1})$. Since Wasserstein loss is bounded on the response cube, their risk contribution has the same order.

On the remaining terminal cells, independence of the second subsample and the local moment bounds control distribution estimation error, while the conditional H\"older condition controls pooling bias. Write $\mathcal L_{\mathrm{good}}$ for these cells and $h_L=2^{-j(L)}$ for their side lengths. Both error terms have conditional $\kappa$th moments of order at most $h_L^{\beta\kappa}$ given the first subsample. Correct rejection also gives $P_X(L)\lesssim T_{j(L)}/n$.

The thresholds in \AppendixRef{app:adaptive-localization} satisfy
$$
 2^{jd_x}T_j\asymp
 2^{j\beta\kappa_c(\tau)}(1+j)^{\1\{d_y=2\}}.
$$
Choose $j_*$ so that $2^{j_*d_x}T_{j_*}\asymp n$ and put $h_*=2^{-j_*}$. Then $h_*^\beta\asymp A_n(\tau)$. Good terminal cells finer than $h_*$ have total probability at most one, so their contribution is at most $Ch_*^{\beta\kappa}$. For the coarser cells,
$$
 \begin{aligned}
 \sum_{\substack{L\in\mathcal L_{\mathrm{good}}\\j(L)\le j_*}}
 P_X(L)h_L^{\beta\kappa}
 &\lesssim \frac1n
 \sum_{\substack{L\in\mathcal L_{\mathrm{good}}\\j(L)\le j_*}}
 |L|\,2^{j(L)(d_x-\beta\kappa)}T_{j(L)}\\
 &\le \frac1n\max_{0\le j\le j_*}
 2^{j(d_x-\beta\kappa)}T_j
 \lesssim n^{-1}+h_*^{\beta\kappa},
 \end{aligned}
$$
where $|L|=h_L^{d_x}$ and disjointness gives $\sum_L|L|\le1$. If $\kappa\le\kappa_c(\tau)$, the maximum is bounded by a constant times its value at $j_*$; if $\kappa>\kappa_c(\tau)$, exponential decay makes it bounded independently of $n$. At $\kappa=\kappa_c(\tau)$, the endpoint bound retains the logarithmic factor when $d_y=2$. Together with the expected bad-cell mass of order $n^{-1}$, this gives, uniformly over all Borel $P_X$,
\begin{equation}\label{eq:higher-kappa-upper-main}
	\sup_P\mathcal R_{n,p,\kappa}(\widehat Q;P)
	\lesssim A_n(\tau)^\kappa+n^{-1},
\end{equation}
with $p=1$ in the unrestricted and $W_1$ Besov classes.

The two terms arise from different lower bounds. Under the uniform covariate distribution, replication of the distribution family gives $A_n(\tau)^\kappa$. For $p>1$, the positive family remains inside the prescribed density bounds and $W_p\ge W_1$ preserves its separation. The second term comes from a two-point covariate law with an atom of mass $n^{-1}$ carrying two separated distribution alternatives. With probability bounded away from zero that atom is not observed, forcing risk of order $n^{-1}$. Section~\ref{app:adaptive-localization} gives both constructions.

\subsection{Numerical illustration: rare covariate events}\label{subsec:num-risk}

To illustrate the effect of rare covariates on higher moments in Theorem~\ref{thm:main}, we take $d_x=d_y=1$ and consider
\begin{equation}\label{eq:num-scalar-law}
 q_{x,s}(y)=1+\alpha sx(2y-1),
 \qquad x,y\in[0,1],\quad s\in\{-1,1\},\quad \alpha=0.65.
\end{equation}
The conditional densities are bounded between $0.35$ and $1.65$, and the corresponding laws are Lipschitz in $x$ under $W_1$. We compare uniform covariates with
\[
 P_{X,n}=(1-n^{-1})\delta_0+n^{-1}\delta_1.
\]
Only the covariate distribution varies with $n$, as permitted by the minimax supremum over unrestricted $P_X$. Here $\beta=1$, so $A_n=n^{-1/3}$ and $\kappa_c=3$. We report the integrated risks for $\kappa=1$ and $\kappa=4$.

We compare the grid, occupancy, and empirical nearest neighbor estimators with a conditional normalizing flow using a monotone rational-quadratic spline \cite{DurkanEtAl2019}. The flow is fitted by likelihood and evaluated under the same Wasserstein loss. We use $n=2^8,\ldots,2^{15}$ and average the risks over the two signs in \eqref{eq:num-scalar-law}. \AppendixRef{subsec:supp-scalar} gives the estimator definitions, tuning parameters, and simulation procedures.

\begin{figure}[tp]
 \centering
 \includegraphics[width=0.80\textwidth]{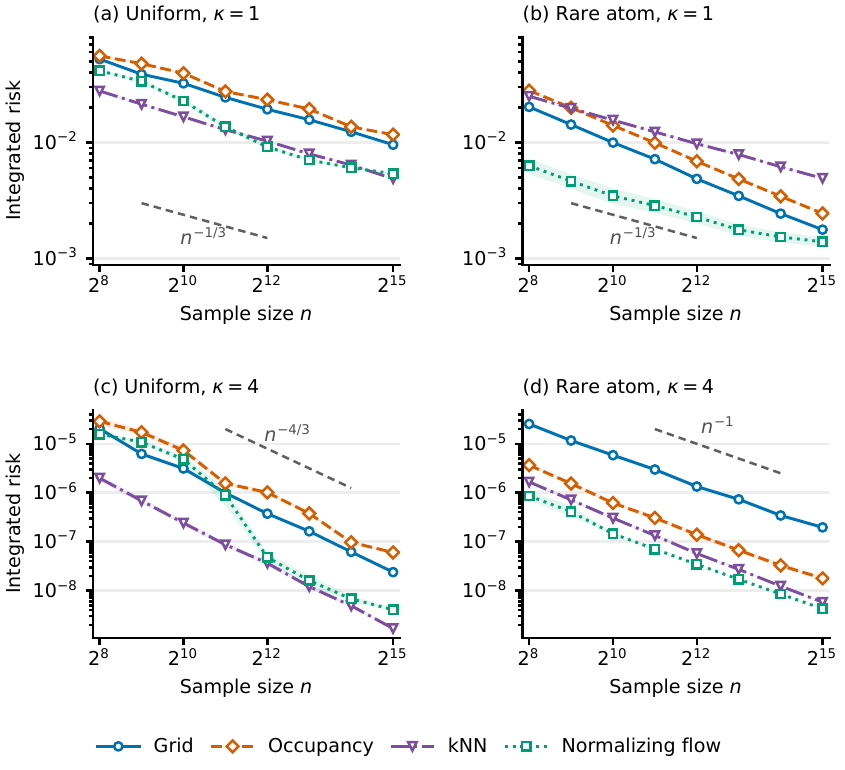}
 \small
 \caption{Conditional distribution learning for \eqref{eq:num-scalar-law}. The left and right columns use uniform and rare covariates, respectively. Panels (a) and (b) show mean integrated $W_1$ loss, and panels (c) and (d) show mean integrated $W_1^4$ loss, averaged over the two signs. All axes are logarithmic, with common vertical limits within each row. Gray dashed segments indicate reference powers with arbitrary vertical scaling. Shaded bands are pointwise $95\%$ bootstrap intervals over independent training datasets, with stratified resampling for rare covariates. \AppendixRef{subsec:supp-scalar} gives the repetition counts and integration procedures.}
 \label{fig:num-risk}
\end{figure}

Over the sample sizes in Figure~\ref{fig:num-risk}, the fourth-moment risks decrease more slowly under rare covariates than under uniform covariates. The gray segments show the first-moment benchmark $n^{-1/3}$ and, for the fourth moment, the uniform covariate benchmark $n^{-4/3}$ and the rare atom scale $n^{-1}$. These powers describe minimax benchmarks over the full model classes; individual estimators can behave differently on the fixed family in \eqref{eq:num-scalar-law}.

At $n=2^{15}$ under rare covariates, most fourth-moment error of the occupancy estimator arises when the rare atom is observed but only sparsely. The missed event establishes the lower bound, while the measured risk also includes this estimation error. This distinction also holds for the normalizing flow; \AppendixRef{subsec:supp-scalar} gives the risk decomposition and corresponding contributions.

\section{Unconditional Wasserstein estimation and multiscale lower bounds}\label{sec:response}

The conditional lower bound uses an unconditional family of distributions that can be attenuated across covariate neighborhoods without changing its rate. \AppendixRef{supp:w1-upper} gives the unconditional upper bounds and their extension to fixed finite moments, and \AppendixRef{app:response-lower} constructs the family used for the lower bound. A single multiscale potential construction gives the rates for $d=1$, $d=2$, and $d>2$.

\begin{theorem}[Unconditional $W_1$ minimax rates]\label{thm:response-benchmark}
	Let $d\ge1$ and fix a finite $1\le\kappa<\infty$. In the smooth case, also fix $\tau>0$ and $1\le r,v\le\infty$. Then, for all sufficiently large $m$,
	\begin{align}
		\inf_{\widehat Q}\sup_{Q\in\Qcal_d^{\mathrm{free}}}
		\E W_1^\kappa(\widehat Q,Q)
		&\asymp a_{d,0}(m)^\kappa,\label{eq:response-free-minimax}\\
		\inf_{\widehat Q}\sup_{Q\in\Qcal^B_{d,\tau,r,v}}
		\E W_1^\kappa(\widehat Q,Q)
		&\asymp a_{d,\tau}(m)^\kappa,
		\qquad \tau>0.\label{eq:response-minimax-w1}
	\end{align}
	Here and below, constants are allowed to depend on the fixed value of $\kappa$, but not on $m$.
\end{theorem}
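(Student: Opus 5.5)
The proof splits into an upper bound and a lower bound. The lower bound carries the real content, namely the critical case $d=2$.

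\emph{Upper bounds.} Since $Q$ and $\widehat Q$ live on $[0,1]^d$, every $W_1$ is bounded by $\sqrt d$, so only the order of the $\kappa$th moment matters. For $\Qcal_d^{\mathrm{free}}$ I would use the empirical measure $\widehat Q_m$. Fournier--Guillin \cite{FournierGuillin2015} and the dyadic chaining bound (Bobkov--Ledoux \cite{BobkovLedoux2021} for $d=2$) give $\E W_1(\widehat Q_m,Q)\lesssim a_{d,0}(m)$. To pass to the $\kappa$th moment, note that $Q\mapsto W_1(\widehat Q_m,Q)$ is a function of the sample with bounded differences $\sqrt d/m$. McDiarmid then gives sub-Gaussian fluctuations of order $m^{-1/2}\lesssim a_{d,0}(m)$, so $\E W_1^\kappa\lesssim a_{d,0}(m)^\kappa$.

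For the Besov class with $d>2$, I would use a truncated wavelet projection estimator at resolution $2^{J}\asymp m^{1/(2\tau+d)}$, followed by projection onto probability measures. The multiscale bound $W_1(\mu,\nu)\lesssim\sum_j 2^{-j}\sum_k|\langle\mu-\nu,\psi_{jk}\rangle|$ (equivalently, the $B^{-1}_{1,1}$ norm dominates $\|\cdot\|_{\KR}$) controls the error. The stochastic part is then $\sum_{j\le J}2^{-j}2^{jd/2}m^{-1/2}$ and the bias is $2^{-J(\tau+1)}L_Y$, as in \cite{NilesWeedBerthet2022}. The moment extension uses the same bounded-difference argument. For $d\le2$, the Besov class is contained in the free class, so the free upper bound suffices.

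\emph{Lower bounds.} The Besov class sits inside the free class (after renaming constants), so a lower bound for the Besov class gives one for the free class as well. For $d=1$, and for the parametric term in general, Le Cam's two-point method with $q_\pm=1\pm c m^{-1/2}g$ for a fixed smooth $g$ gives $m^{-1/2}$. For $d>2$, I would use Assouad on a hypercube at one resolution $2^{-j}\asymp m^{-1/(2\tau+d)}$. Take $q_\omega=1+\sum_k\omega_k\epsilon\,\psi_{jk}$ with $\epsilon\asymp 2^{-j(\tau+d/2)}$, chosen so the density stays in the Besov ball and in the range $[1/2,2]$, and so that flipping one sign costs Hellinger distance $O(m^{-1})$. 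Testing each $\psi_{jk}$ against a Lipschitz function of norm $\asymp 2^{-j}$ gives per-coordinate $W_1$ separation $2^{-j}\epsilon 2^{-jd/2}$. Summing over the $2^{jd}$ coordinates gives $m^{-(\tau+1)/(2\tau+d)}$. For the free class with $d>2$, the usual $m^{-1/d}$ bound follows either from $\tau\to0$ or from the standard discrete construction. Jensen then converts $\E W_1\gtrsim a$ into $\E W_1^\kappa\gtrsim a^\kappa$.

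\emph{Main obstacle: $d=2$.} A single resolution only gives $m^{-1/2}$. I would superpose perturbations over all admissible scales $j\in[j_0,J]$ with $J-j_0\asymp\log m$. The density is $q_\omega=1+\sum_j\sum_k\omega_{jk}\epsilon_j\psi_{jk}$ with $\epsilon_j=c\,2^{-j}/\sqrt{m\log m}\cdot 2^{j}$. This makes each coefficient of size $c(m\log m)^{-1/2}$ in $L^2$ normalization, so flipping all signs together costs total Hellinger $\sum_{j,k}\epsilon^2\cdot$(per-coefficient) $=O(1)$ only after the $1/\log m$ normalization. Instead, I would put a product prior on $\omega$ and bound the $\chi^2$ or Hellinger distance between the mixtures coordinatewise, Assouad-style. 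Each flip then has Hellinger cost $O(1/(m\log m))$, which is small but still nonzero.

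The key step is the $W_1$ separation. I would use Kantorovich duality with a \emph{single} test function, the multiscale potential $f=\sum_j\sum_k\omega_{jk}2^{-j}\tilde\psi_{jk}$ with appropriate normalization. In two dimensions the gradients of the different scales are nearly orthogonal, so $\|\nabla f\|_\infty$ would pick up a $\sqrt{\log m}$ factor in the worst case. The plan is to control $\operatorname{Lip}(f)$ either by random signs (typical $\nabla f$ is sub-Gaussian with variance $\asymp\log m$) or by restricting to a sign pattern chosen adaptively against $\widehat Q$. The payoff is $\int f\,d(q_\omega-\widehat q)\gtrsim\sqrt{\log m}\cdot\sqrt{\log m/m}/\sqrt{\log m}$, that is, $\sqrt{\log m/m}$.

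Concretely, I expect to need a Lipschitz bound for $f$ with random signs that holds uniformly, via a truncated $H^1$-type estimate that loses only constants. Getting this bound without the extra $\sqrt{\log m}$ loss is the main obstacle. My first attempt would be a Fourier argument in the style of Bobkov--Ledoux: bound $\|\mu-\nu\|_{\KR}$ from below by a dual $\dot H^{-1}$ quantity restricted to the band of frequencies $[2^{j_0},2^J]$. That quantity is exactly of order $\sqrt{\sum_j 2^{-2j}\cdot 2^{2j}/m}=\sqrt{\log m/m}$.
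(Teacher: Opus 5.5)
Your upper bounds match the paper's argument: the empirical measure with bounded differences, and a truncated wavelet estimator controlled by the endpoint bound $\|w\,dy\|_{\KR}\lesssim\|\alpha_w\|_{\ell^1}+\sum_j2^{-j(1+d/2)}\|\beta_{w,j}\|_{\ell^1}$ followed by an approximate KR projection. For $d\ne2$, your two-point and single-resolution Assouad lower bounds are a valid variant of the paper's unified multiscale family, because there $B_J$ is dominated by one end of the scale range. The gap is the critical case $d=2$, which is the new content of the theorem.

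First, your amplitude is inconsistent with the payoff you claim. Every $1$-Lipschitz function on the unit cube has $\|\nabla f\|_2\le1$, so $\|g\,dy\|_{\KR}\le\|g\|_{-1,2}\lesssim(\sum_j2^{-2j}\|\beta_{g,j}\|_{\ell^2}^2)^{1/2}$. With coefficients of size $(m\log m)^{-1/2}$ at $\asymp2^{2j}$ positions on $J\asymp\log m$ levels, this is $\asymp m^{-1/2}$, so the logarithm is lost for every test function. The amplitude has to saturate the testing budget, with per-flip Hellinger distance $\asymp1/m$. That is what your final computation implicitly uses, and it is the paper's choice $U_\theta=\varepsilon\sum\theta_{j,k}2^{-2j}\psi_{j,k}$, $q_\theta=1-\Delta U_\theta$, $\varepsilon=cm^{-1/2}$.

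Second, and more seriously, the separation step is left open, and none of your three routes closes it.

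\emph{Fourier route.} The band-limited $\dot H^{-1}$ quantity of order $\sqrt{\log m/m}$ bounds the KR norm from \emph{above}, by the inequality just used. Turning it into a lower bound needs a Lipschitz bound on the band-limited potential, which is the obstacle itself.

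\emph{Random signs.} Normalize $f$ so that each level contributes $O(1)$ to $\nabla f$. The pointwise variance of $\nabla f$ is $\asymp J$, but $\operatorname{Lip}(f)$ is a supremum over $\asymp2^{2J}$ nearly independent cells, so it is of order $\sqrt{J\cdot J}=J$. Then $\int f\,d(q_\theta-q_{\theta'})/\operatorname{Lip}(f)\asymp\varepsilon J/J=\varepsilon\asymp m^{-1/2}$, which is only the single-resolution rate. Moreover, after decoding, the test function must come from $V=U_\theta-U_{\widehat\theta}$. Its set $E_j$ of differing coordinates is chosen by the estimator and is correlated with $\theta$, so you need a bound valid for every pattern, not for typical signs.

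\emph{Adaptive signs.} This alternative is not specified, so it does not supply the missing bound.

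The missing idea is a KR lower bound that uses only non-concentration of $\nabla V$ rather than a supremum bound. This is the Lipschitz truncation estimate of Proposition~\ref{prop:kr-lusin}, $\|(-\Delta U)\,dy\|_{\KR}\ge c\|\nabla U\|_2^3/\|\nabla U\|_4^2$, obtained by altering $U$ where the maximal function of $\nabla U$ exceeds $\lambda\asymp\|\nabla U\|_4^2/\|\nabla U\|_2$. Set $S=\sum_j2^{-2j}|E_j|$. The square-function bounds $\|\nabla V\|_2^2\gtrsim\varepsilon^2S$ and $\|\nabla V\|_4^4\lesssim\varepsilon^4B_JS$ hold deterministically for every pattern, because $\sum_j\sum_{k\in E_j}2^{-2j}|\psi_{j,k}|^2$ integrates to $S$ and is pointwise $\lesssim B_J$. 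Together they give $W_1(Q_\theta,Q_{\theta'})\gtrsim\varepsilon S/\sqrt{B_J}$, which is linear in $S$. The coordinatewise testing bound $\E S\gtrsim B_J$ then yields $\varepsilon\sqrt{B_J}\asymp\sqrt{\log m/m}$. In effect, truncation caps the Lipschitz constant at the typical level $\varepsilon\sqrt J$ rather than the worst-case level $\varepsilon J$.
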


For the upper bound, the empirical measure attains the unrestricted rate in every dimension and the smooth rate when $d\le2$. When $d>2$ and $\tau>0$, we truncate the empirical wavelet expansion at level $J$ and apply an approximate projection in the KR norm onto probability laws. The resulting estimator satisfies
\[
 \E W_1(\widehat Q_m,Q)
 \lesssim 2^{-J(\tau+1)}+m^{-1/2}2^{J(d/2-1)}.
\]
The first term is the approximation bias, with the gain of one derivative coming from the Lipschitz test functions in KR duality; the second is the stochastic error. Choosing $2^{J(2\tau+d)}\asymp m$ gives the rate $a_{d,\tau}(m)$. No pointwise density bounds are used. \AppendixRef{supp:w1-upper} proves these estimates and extends the upper bounds to every fixed finite moment.

The lower bound requires response laws that are difficult to distinguish statistically but well separated under Wasserstein loss. We combine perturbations across resolutions to accumulate separation while keeping neighboring alternatives difficult to distinguish. We use the multiscale potential
\begin{equation}\label{eq:critical-potential-family-main}
	U_\theta
	=\varepsilon\sum_{j=j_0}^J\sum_{k\in\mathcal K_j}
	\theta_{j,k}2^{-2j}\psi_{j,k},
	\qquad
	q_\theta=1-\Delta U_\theta,
	\qquad
	\varepsilon=cm^{-1/2}.
\end{equation}
Here $|\mathcal K_j|\asymp2^{jd}$ and the retained wavelets are supported in a fixed compact subset of $(0,1)^d$. The factor $2^{-2j}$ compensates for the two derivatives in the Laplacian. Thus the density perturbations have comparable testing cost across scales. At resolution $j$, each wavelet perturbation in the potential has gradient $L^2$ norm of order $\varepsilon2^{-j}$. With $\tau=0$ for the unrestricted branch, the upper resolution $J$ is chosen maximal subject to
$$
	\varepsilon2^{J(\tau+d/2)}\le c_0.
$$
\AppendixRef{app:response-lower} shows that this choice ensures positivity, the Besov constraint when $\tau>0$, and the one-coordinate testing bound. The dimensional transition is encoded by
$$
	B_J=\sum_{j=j_0}^J2^{j(d-2)}
	\asymp
	\begin{cases}
		1,&d=1,\\
		J,&d=2,\\
		2^{J(d-2)},&d>2.
	\end{cases}
$$
This sum determines the rate in each response dimension.

It remains to convert the energy of the multiscale potential into Wasserstein separation. The required estimate is a lower bound for the Kantorovich--Rubinstein norm.

\begin{proposition}[Potential lower bound for the KR norm]\label{prop:kr-lusin}
	Let $d\ge1$ and $U\in C_c^2((0,1)^d)$. Set $g=-\Delta U$. Then
	\begin{equation}\label{eq:kr-lusin-main}
		\|g\,dy\|_{\KR}
		\ge c_d\frac{\|\nabla U\|_{L^2}^{3}}{\|\nabla U\|_{L^4}^{2}},
	\end{equation}
	with the right-hand side understood to be zero if $g=0$.
\end{proposition}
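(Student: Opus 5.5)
The plan is to use the Kantorovich--Rubinstein duality \eqref{eq:kr-norm-prelim} with a Lipschitz truncation of $U$ as the test function. The naive choice $f=U$ gives
\[
\int U g = \int |\nabla U|^2,
\]
but its Lipschitz constant is $\|\nabla U\|_{L^\infty}$. The $L^4$ norm in \eqref{eq:kr-lusin-main} indicates that one should instead use a Lusin-type Lipschitz approximation: keep $f=U$ where the maximal function of $\nabla U$ is at most a level $\lambda$, and extend $f$ off that set with Lipschitz constant of order $\lambda$.

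\textbf{Construction of the test function.} Extend $U$ by zero to $\mathbb R^d$, so that $U\in C_c^2(\mathbb R^d)$. Let $M$ be the Hardy--Littlewood maximal operator and put
\[
G_\lambda=\{y:M|\nabla U|(y)\le\lambda\}.
\]
We invoke the standard pointwise inequality (Bojarski--Haj{\l}asz)
\[
|U(y)-U(y')|\le C_d|y-y'|\bigl(M|\nabla U|(y)+M|\nabla U|(y')\bigr).
\]
By this inequality, $U|_{G_\lambda}$ is $2C_d\lambda$-Lipschitz. The McShane extension then gives $f:\mathbb R^d\to\mathbb R$ with $\operatorname{Lip}(f)\le 2C_d\lambda$ and $f=U$ on $G_\lambda$. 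Since $f$ and $U$ are both Lipschitz and agree on $G_\lambda$, we have $\nabla f=\nabla U$ almost everywhere on $G_\lambda$: at density points of $G_\lambda$ where both functions are differentiable, the differentials must coincide.

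\textbf{Estimate of the pairing.} Because $U$ has compact support in $(0,1)^d$, integrating by parts against the Lipschitz function $f$ produces no boundary terms:
\[
\int f g\,dy=\int\nabla f\cdot\nabla U\,dy
\ge \|\nabla U\|_{L^2}^2-\int_{G_\lambda^c}|\nabla U|^2-2C_d\lambda\int_{G_\lambda^c}|\nabla U|.
\]
The strong $L^4$ maximal inequality gives
\[
|G_\lambda^c|\le C\|\nabla U\|_{L^4}^4\lambda^{-4}.
\]
Combining this with H\"older's inequality bounds the two error terms:
\[
\int_{G_\lambda^c}|\nabla U|^2\le\|\nabla U\|_{L^4}^2|G_\lambda^c|^{1/2}\le C\|\nabla U\|_{L^4}^4\lambda^{-2},
\]
\[
\lambda\int_{G_\lambda^c}|\nabla U|\le \lambda\|\nabla U\|_{L^4}|G_\lambda^c|^{3/4}\le C\|\nabla U\|_{L^4}^4\lambda^{-2}.
\]

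\textbf{Choice of $\lambda$ and conclusion.} Choose
\[
\lambda^2=2C'\,\|\nabla U\|_{L^4}^4/\|\nabla U\|_{L^2}^2,
\]
which is possible when $g\neq0$, since then $\nabla U\ne0$. With this choice the error terms are at most half the main term, so $\int fg\ge\tfrac12\|\nabla U\|_{L^2}^2$. Dividing by $\operatorname{Lip}(f)\lesssim\lambda$ yields
\[
\|g\,dy\|_{\KR}\ge c_d\,\frac{\|\nabla U\|_{L^2}^3}{\|\nabla U\|_{L^4}^2}.
\]
Restricting $f$ to $[0,1]^d$ does not increase its Lipschitz constant, so $f$ is admissible in \eqref{eq:kr-norm-prelim}.

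The main obstacle is the Lipschitz truncation step, specifically the pointwise maximal-function inequality and the almost-everywhere identification $\nabla f=\nabla U$ on $G_\lambda$. I would cite these as standard facts and check only that extending $U$ by zero and then restricting $f$ back to the cube causes no loss.
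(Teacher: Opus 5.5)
Your proposal is correct and follows the paper's proof: zero extension, Lipschitz truncation of $U$ at gradient level $\lambda$, the maximal-function bound $|G_\lambda^c|\lesssim\|\nabla U\|_{L^4}^4\lambda^{-4}$, H\"older on the exceptional set, and the choice $\lambda^2\asymp\|\nabla U\|_{L^4}^4/\|\nabla U\|_{L^2}^2$. The only difference is that the paper cites Diening--M\'alek--Steinhauer (Theorem~2.3) for the truncation, whereas you build it directly from the Bojarski--Haj{\l}asz pointwise inequality and a McShane extension, which makes your argument more self-contained.
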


The proof is based on quantitative Sobolev Lipschitz truncation \cite{AcerbiFusco1988,DieningMalekSteinhauer2008,Hajlasz1996}. A related argument for the empirical matching lower bound appears in \cite[Section~7]{BobkovLedoux2021}. Extend $U$ by zero and truncate it at gradient level $\lambda$. If $E_2=\|\nabla U\|_2^2$ and $E_4=\|\nabla U\|_4^4$, the truncation gives a Lipschitz function $\phi_\lambda$ satisfying
$$
	\int(-\Delta U)\phi_\lambda
	\ge E_2-CE_4\lambda^{-2},
	\qquad
	\operatorname{Lip}(\phi_\lambda)\lesssim\lambda.
$$
Choose $\lambda^2\asymp E_4/E_2$ and rescale the test function to have Lipschitz constant one. This establishes \eqref{eq:kr-lusin-main}; the full proof is given in \AppendixRef{app:response-lower}.

For two vertices $\theta,\theta'$, set $U=U_\theta-U_{\theta'}$ and let $H_j$ be the number of differing signs at resolution $j$. Define
\begin{equation}\label{eq:critical-S-main}
	S=\sum_{j=j_0}^J2^{-2j}H_j.
\end{equation}
The square function bounds give
$$
	\|\nabla U\|_2^2\gtrsim\varepsilon^2S,
	\qquad
	\|\nabla U\|_4^4\lesssim\varepsilon^4B_JS.
$$
Applying Proposition~\ref{prop:kr-lusin} now gives
\begin{equation}\label{eq:critical-separation-main}
	W_1(Q_\theta,Q_{\theta'})
	\gtrsim \varepsilon S/\sqrt{B_J}.
\end{equation}
Coordinatewise testing yields $\E S\gtrsim B_J$. Since $2^J\asymp m^{1/(2\tau+d)}$, with $\tau=0$ in the unrestricted branch,
\begin{equation}\label{eq:response-lower-cases-main}
	\varepsilon\sqrt{B_J}\asymp
	\begin{cases}
		m^{-1/2},&d=1,\\[1mm]
		\{\log(em)/m\}^{1/2},&d=2,\\[1mm]
		m^{-(\tau+1)/(2\tau+d)},&d>2,
	\end{cases}
\end{equation}
which is of order $a_{d,\tau}(m)$. The testing argument therefore proves the first-moment lower bounds in Theorem~\ref{thm:response-benchmark}. Jensen's inequality gives the corresponding lower bounds for every fixed finite $\kappa$. \AppendixRef{app:response-lower} also verifies the attenuation estimates needed for the conditional lower bound.

The critical case is $d=2$. Here each active resolution contributes at the same order to $B_J$, so $B_J\asymp J\asymp\log(em)$ and the accumulation across scales produces the factor $\sqrt{\log m}$. A family supported on one resolution yields only the parametric separation $m^{-1/2}$. Besov smoothness limits the finest scale but not this accumulation.

Figure~\ref{fig:num-critical} illustrates this accumulation using two-dimensional cosine densities, a numerical analogue of the interior-wavelet construction above. Combining levels gives greater mean transport separation than using the finest level alone at $J=2,3,4,5$ in panel (a). Panel (b) uses the corresponding $\sqrt J$ normalization. These five resolutions, with varying amplitudes, are insufficient to determine the asymptotic scaling. \AppendixRef{subsec:supp-critical} gives the construction and numerical transport bounds.

\begin{figure}[tp]
 \centering
 \includegraphics[width=0.80\textwidth]{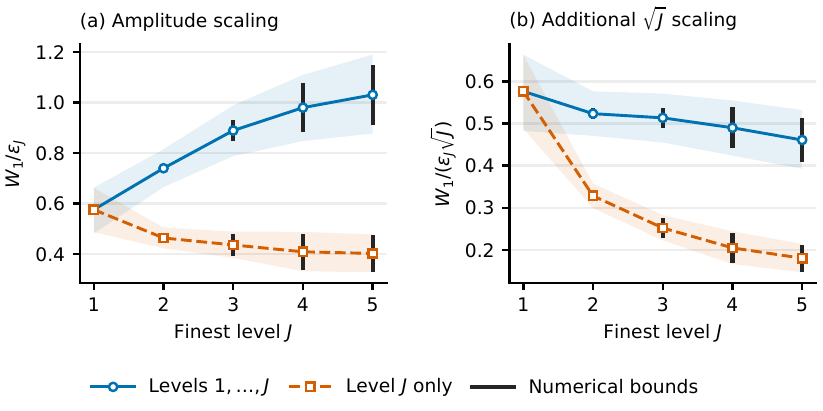}
 \small
 \caption{Transport separation for cosine densities using multiple levels or the finest level alone, with amplitude $\varepsilon_J=0.025\,2^{-2J}$. Panels (a) and (b) show $W_1/\varepsilon_J$ and $W_1/(\varepsilon_J\sqrt J)$. Markers and lines show means of numerical interval midpoints; black segments give mean lower and upper bounds. Shaded bands incorporate these bounds and pointwise $95\%$ bootstrap uncertainty over $32$ independent pairs of sign realizations at each resolution. \AppendixRef{subsec:supp-critical} gives the transport computations and numerical bounds.}
 \label{fig:num-critical}
\end{figure}

\section{Conditional Brenier map estimation}\label{sec:brenier-main}

Theorem~\ref{thm:brenier} follows from stability for the upper bound and an inverse potential construction for the lower bound.

\subsection{Stability and the minimax upper bound}

Lemma~\ref{lem:brenier-stability} specializes the fixed-source stability bound in \cite[Corollary~1]{BalakrishnanManole2026} to Assumption~\ref{ass:brenier-class}. Uniformly over $x\in S_X$, for every probability law $\widetilde Q$ on $\Y$,
\begin{equation}\label{eq:brenier-stability-main}
	\|T_{\rho\to\widetilde Q}-T_x\|_{L^2(\rho)}^2
	\le \frac{\Lambda}{\lambda_0}W_2^2(Q_x,\widetilde Q),
\end{equation}
where $\lambda_0=(\overline q\Lambda^{d_y-1})^{-1}$. The corresponding minimax comparison is made explicit in \cite{Balakrishnan2026}.

Apply Theorem~\ref{thm:main-wp} with $p=\kappa=2$ and density bounds $\underline q=\Lambda^{-d_y}$ and $\overline q$. Assumption~\ref{ass:brenier-class} and the density lower bound in Section~\ref{subsec:brenier-result} place the Brenier class inside the corresponding distribution class. There is therefore a measurable distribution estimator $\widehat Q$ such that, for the plugin map $\widehat T_x=T_{\rho\to\widehat Q_x}$, Lemma~\ref{lem:brenier-stability} gives
\begin{equation}\label{eq:brenier-plugin-upper}
 \sup_{P\in\Pcal^{\mathrm{Br}}_{\beta,\tau,r,v}(\overline q,\Lambda)}
 \mathcal R_n^{\mathrm{map}}(\widehat T;P)
 \lesssim A_n(\tau)^2+n^{-1}
 \asymp A_n(\tau)^2.
\end{equation}
The final comparison follows from $\gamma(\tau)<1/2$, since $d_x\ge1$. Only the true conditional map needs the curvature bounds in the stability lemma; the estimated law $\widehat Q_x$ need not lie in the Brenier class and may be discrete. \AppendixRef{app:brenier} records measurability of the plugin estimator. This proves the upper bound in Theorem~\ref{thm:brenier} for the full class.

\subsection{Lower bound from inverse potentials}

For the lower bound, we use the multiscale potentials $U_\theta$ and the spectral sum $B_J$ from Section~\ref{sec:response}, with $d=d_y$, but parameterize the inverse map. Define
$$
	S_U=\Id+\nabla U,
	\qquad
	T_U=S_U^{-1},
	\qquad
	q_U=\det(I+D^2U).
$$
The target density is then an explicit determinant, avoiding composition with an inverse map. For sufficiently small Hessian norm, \AppendixRef{app:inverse-potential-map} shows that $T_U$ is a Brenier map in the class of Assumption~\ref{ass:brenier-class}, the squared Hellinger distance between neighboring distributions is $O(\varepsilon^2)$, and
$$
	\|T_U-T_V\|_{L^2(\rho)}^2
	\asymp \|\nabla U-\nabla V\|_2^2.
$$
Embedding independent copies of this family in covariate neighborhoods of diameter $h$ gives the three constraints
\begin{equation}\label{eq:brenier-lower-three-constraints}
	nh^{d_x}\varepsilon^2\lesssim1,
	\qquad
	\varepsilon2^{J(\tau+d_y/2)}\lesssim1,
	\qquad
	\varepsilon\sqrt{B_J}\lesssim h^\beta.
\end{equation}
They correspond, respectively, to testing, smoothness in the response variable, and the conditional $W_2$ H\"older condition. Taking $\varepsilon\asymp(nh^{d_x})^{-1/2}$ and the largest admissible $J$ gives
$$
	\varepsilon\sqrt{B_J}
	\asymp a_{d_y,\tau}(nh^{d_x}).
$$
Balancing as in \eqref{eq:balancing-main} gives $h^\beta\asymp A_n(\tau)$ and squared $L^2(\rho)$ map separation $\varepsilon^2B_J\asymp A_n(\tau)^2$. \AppendixRef{app:inverse-potential-map} verifies the determinant identity, class membership, separation, and testing bounds, completing the lower bound in Theorem~\ref{thm:brenier}.

\section{Discussion}\label{sec:discussion}

The minimax results separate response distribution estimation from localization in the covariate. Weighting local errors by cell probabilities preserves the ordinary integrated Wasserstein rate $A_n(\tau)$ uniformly over unrestricted covariate distributions. Higher moments can instead be dominated by rare unobserved regions, which contribute the additional $n^{-1}$ term to the integrated $W_p^\kappa$ risk. This limitation applies to conditional generators even when the response laws are simple.

For comparison, a covariate density bounded below by a fixed $c_X\in(0,1]$ removes the rare region penalty. In the distribution classes considered here, the minimax integrated $W_p^\kappa$ risk is then of order $A_n(\tau)^\kappa$ for every fixed finite $\kappa$. Regular grid cells have probability at least $c_Xh^{d_x}$, so Proposition~\ref{prop:partition} gives the upper bound by balancing localization bias against error at sample scale $nh^{d_x}$. The lower bound under uniform covariates remains applicable.

Within the Brenier class, uniform curvature bounds convert squared $W_2$ estimation error into squared $L^2(\rho)$ map error of order $A_n(\tau)^2$. For smooth classes with $d_y>2$, the sharp upper bound uses a wavelet distribution estimator and an approximate metric projection; computational complexity bounds for this construction remain open. Attaining these minimax rates with computationally efficient parameterized generators remains a separate problem.

Further statistical questions concern adaptation to unknown $\beta$ and $\tau$ and extension to unbounded response spaces under suitable tail conditions. The density bounds for $p>1$ and the curvature assumptions for Brenier maps are sufficient conditions. It remains open how far these conditions can be relaxed without changing the rates.

\section*{Declarations}

The authors used OpenAI's GPT-5.6 and GPT-6 to assist with proof development, literature searches, numerical experiments, and manuscript editing. The authors directed the research, independently verified all AI-assisted work, and determined the final formulation and presentation of the results. The authors take full responsibility for all content.

\section*{Acknowledgments}

This work of Kim was supported by the National Research Foundation of Korea(NRF) grant funded by the Korea government(MSIT) (RS-2026-25598375).

\clearpage
\appendix
\numberwithin{figure}{section}
\renewcommand{\theHfigure}{appendix.\thesection.\arabic{figure}}
\section{Preliminaries}\label{supp:guide}
We fix the wavelet conventions used below and record the transport comparison and measurable projection used in the upper bounds. Implicit constants may depend on the fixed structural parameters and wavelet systems, but not on the sample size, resolution level, cell mass, or hypercube signs.

For each fixed $\tau\ge0$, choose an integer $K>\tau+2$ and fix a compactly supported tensor-product Cohen--Daubechies--Vial orthonormal boundary wavelet basis of $L^2([0,1]^d)$ whose regularity and number of vanishing moments exceed $K$. We use the basis without imposed boundary conditions: the scaling spaces reproduce affine functions, and every detail wavelet, including those at the boundary, has zero moments against affine functions; see \cite[Theorem~4.4]{CohenDaubechiesVial1993} and \cite[Section~2.12]{Cohen2003}. For the multiscale lower-bound construction, we retain only interior wavelets supported in a fixed compact subset of $(0,1)^d$; zero extension then permits the standard positive-order square function estimates.

Let $\Phi$ denote the finite scaling family and $\Psi_j$, $j\ge j_0$, the detail families on $[0,1]^d$. If $\alpha_f$ and $\beta_{f,j}$ are the associated coefficients, then the Besov norm is equivalent to
\begin{equation}\label{eq:besov-wavelet-supp}
	\|\alpha_f\|_{\ell_r}
	+\left\|\left(
	2^{j\{\tau+d(1/2-1/r)\}}\|\beta_{f,j}\|_{\ell_r}
	\right)_{j\ge j_0}\right\|_{\ell_v}.
\end{equation}
See \cite{CohenDaubechiesVial1993,Cohen2003,TriebelDomains2008}. We use this characterization throughout the proofs for distribution estimation.

Write $\Omega_d=[0,1]^d$. For a mean-zero function $u\in L^p(\Omega_d)$, $1<p<\infty$, set
\begin{equation}\label{eq:negative-def-wp}
	\|u\|_{-1,p}
	=
	\sup\left\{
	\left|\int_{\Omega_d}u\phi\right|:
	\phi\in W^{1,p'}(\Omega_d),\ \|\nabla\phi\|_{L^{p'}}\le1
	\right\},
	\qquad \frac1p+\frac1{p'}=1.
\end{equation}
Constants are immaterial because $u$ has zero mean.

\begin{lemma}[Flux representation]\label{lem:flux-negative-wp}
	For every mean-zero $u\in L^p(\Omega_d)$,
	$$
		\|u\|_{-1,p}
		=
		\min\left\{\|V\|_{L^p}:\nabla\!\cdot V=u\ \text{in distributions and }V\cdot n=0\ \text{on }\partial\Omega_d\right\}.
	$$
\end{lemma}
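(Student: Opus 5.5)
The identity is Fenchel--Rockafellar duality in $L^p$: the dual norm of $\|\nabla\phi\|_{L^{p'}}$ on mean-zero test functions is the minimal-norm flux. The plan is to prove both inequalities and then show the minimum is attained.

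\emph{Easy inequality.} Suppose $V\in L^p(\Omega_d;\mathbb R^d)$ satisfies $\nabla\!\cdot V=u$ in distributions with $V\cdot n=0$ on $\partial\Omega_d$. The natural weak meaning of this pair of conditions is
$$
\int_{\Omega_d}u\phi=-\int_{\Omega_d}V\cdot\nabla\phi\qquad\text{for all }\phi\in C^1(\overline{\Omega_d}).
$$
Since $C^1(\overline{\Omega_d})$ is dense in $W^{1,p'}(\Omega_d)$ on the Lipschitz cube, the identity extends to all $\phi\in W^{1,p'}$. Hölder's inequality then gives $\bigl|\int u\phi\bigr|\le\|V\|_{L^p}\|\nabla\phi\|_{L^{p'}}$. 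Taking the supremum over $\phi$ yields $\|u\|_{-1,p}\le\|V\|_{L^p}$ for every admissible $V$.

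\emph{Reverse inequality and attainment.}
1. Consider the subspace $G=\{\nabla\phi:\phi\in W^{1,p'}(\Omega_d)\}\subset L^{p'}(\Omega_d;\mathbb R^d)$.
2. Define the functional $\ell(\nabla\phi)=\int u\phi$ on $G$. It is well defined because $u$ has mean zero, so adding constants to $\phi$ does not change $\ell$.
3. By definition of $\|u\|_{-1,p}$, we have $|\ell(g)|\le\|u\|_{-1,p}\|g\|_{L^{p'}}$ on $G$. Note that $\|u\|_{-1,p}$ is finite by Poincaré--Wirtinger on the cube combined with Hölder.
4. Hahn--Banach extends $\ell$ to all of $L^{p'}$ with the same norm.
5. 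Riesz representation for $(L^{p'})^*=L^p$, valid since $1<p<\infty$, gives $W\in L^p$ with $\|W\|_{L^p}=\|u\|_{-1,p}$ and $\int W\cdot\nabla\phi=\int u\phi$ for all $\phi\in W^{1,p'}$.
6. Set $V=-W$. Then $\int u\phi=-\int V\cdot\nabla\phi$ for all $\phi$, which is exactly the weak formulation above. Testing with $\phi\in C_c^\infty$ gives $\nabla\!\cdot V=u$ in distributions, and the full identity encodes $V\cdot n=0$.

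Hence $V$ is admissible with $\|V\|_{L^p}=\|u\|_{-1,p}$. Combined with the easy inequality, this shows the infimum is attained and equals $\|u\|_{-1,p}$, so writing $\min$ is justified.

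\emph{Main obstacle.} The delicate point is interpreting the boundary condition $V\cdot n=0$ for a merely $L^p$ field. It must be defined as the weak condition that the identity holds for all $\phi\in W^{1,p'}(\Omega_d)$, and not only for compactly supported $\phi$. Equivalently, one can phrase it via the normal trace in $W^{-1/p,p}(\partial\Omega_d)$, which exists for fields in $L^p$ with divergence in $L^p$ by the Gauss--Green formula on Lipschitz domains.

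The density of $C^1(\overline{\Omega_d})$ in $W^{1,p'}$ on the cube makes the two formulations coincide. I would state the weak formulation explicitly as the definition used in the lemma.
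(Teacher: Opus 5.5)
Your proposal is correct and follows the paper's route. The paper also applies Hahn--Banach to the functional $\nabla\phi\mapsto\int u\phi$ (equivalently on $W^{1,p'}(\Omega_d)/\mathbb R$ with norm $\|\nabla\phi\|_{p'}$), uses $L^p$--$L^{p'}$ duality to obtain an attaining field, reads $V\cdot n=0$ as the weak identity against all $\phi\in W^{1,p'}(\Omega_d)$, and gets the converse from H\"older; your remarks on finiteness via Poincar\'e--Wirtinger and on density of $C^1(\overline{\Omega_d})$ just fill in details the paper leaves implicit.
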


\begin{proof}
	Equip $W^{1,p'}(\Omega_d)/\mathbb R$ with the norm $\|\nabla\phi\|_{p'}$. The functional $[\phi]\mapsto-\int u\phi$ has norm $\|u\|_{-1,p}$. Hahn--Banach and $L^p$--$L^{p'}$ duality give a vector field $V$ with $\|V\|_p=\|u\|_{-1,p}$ and
	$$
		\int V\cdot\nabla\phi=-\int u\phi,
		\qquad \phi\in W^{1,p'}(\Omega_d).
	$$
	This is the weak formulation of $\nabla\!\cdot V=u$ with zero normal trace. The converse follows from H\"older's inequality.
\end{proof}

\begin{proposition}[One-sided transport comparison]\label{prop:wp-negative}
	Let $1<p<\infty$ and let $f,q\in L^p(\Omega_d)$ be probability densities with $q\ge\underline q>0$ almost everywhere. Then
	\begin{equation}\label{eq:wp-upper-one-sided}
		W_p(f\,dy,q\,dy)
		\le p\,\underline q^{-(p-1)/p}\|f-q\|_{-1,p}.
	\end{equation}
\end{proposition}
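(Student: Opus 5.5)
We follow the standard Benamou--Brenier argument (as in Peyre's and Ledoux's one-sided bounds). First, we use Lemma~\ref{lem:flux-negative-wp} to pick a flux $V$ with $\nabla\!\cdot V=f-q$, $V\cdot n=0$ on $\partial\Omega_d$, and $\|V\|_{L^p}=\|f-q\|_{-1,p}$. Second, we interpolate linearly, $\rho_t=(1-t)q+tf$ for $t\in[0,1]$. Then $\partial_t\rho_t=f-q=\nabla\!\cdot V$, so $(\rho_t,-V)$ solves the continuity equation $\partial_t\rho_t+\nabla\!\cdot(\rho_t v_t)=0$ with velocity $v_t=-V/\rho_t$ and no-flux boundary condition. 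Each $\rho_t$ is a probability density.

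Third, we invoke the Benamou--Brenier dynamic formulation for $W_p$ on the convex domain $\Omega_d$:
$$
W_p(f\,dy,q\,dy)\le\int_0^1\Bigl(\int|v_t|^p\rho_t\,dy\Bigr)^{1/p}dt
=\int_0^1\Bigl(\int|V|^p\rho_t^{1-p}\,dy\Bigr)^{1/p}dt.
$$
We use only the ``$\le$'' direction. It can be justified by the superposition principle: any solution of the continuity equation with $\int_0^1\int|v_t|^p\rho_t<\infty$ is represented by a measure on absolutely continuous curves. The resulting time-$0$/time-$1$ coupling has cost bounded by this action. The no-flux condition keeps the curves in $\Omega_d$. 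The velocity is only $L^p(\rho_t)$, not Lipschitz, so this superposition step (e.g.\ Ambrosio--Gigli--Savar\'e, Theorem~8.2.1, or a mollification argument) is the main technical point.

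Fourth, the lower bound $\rho_t\ge(1-t)q\ge(1-t)\underline q$ gives, since $1-p<0$,
$$
\int|V|^p\rho_t^{1-p}\le\bigl((1-t)\underline q\bigr)^{1-p}\|V\|_p^p.
$$
Hence
$$
W_p\le\underline q^{-(p-1)/p}\|V\|_p\int_0^1(1-t)^{-(p-1)/p}dt
=p\,\underline q^{-(p-1)/p}\|f-q\|_{-1,p},
$$
because $\int_0^1(1-t)^{-1+1/p}dt=p$. This is exactly \eqref{eq:wp-upper-one-sided}, and it explains the constant $p$.

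The main obstacle is the rigorous justification of the dynamic upper bound with a rough velocity field. If the superposition theorem is not cited directly, we would regularize: mollify $V$ and $f,q$ (reflecting to preserve the no-flux condition), add $\delta$ to the densities to make the velocity smooth and bounded, apply the classical flow-map coupling, and pass to the limit using lower semicontinuity of $W_p$ under weak convergence together with convergence of the action bound.
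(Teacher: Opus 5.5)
Your argument is correct and is essentially the paper's proof. The paper uses the same flux $V$ from Lemma~\ref{lem:flux-negative-wp} and the same convex interpolation, only run from $f$ to $q$ and reparametrized as $\rho_t=(1-t^p)f+t^pq$ with $j_t=pt^{p-1}V$. This makes $\int|j_t|^p\rho_t^{1-p}\,dy\le p^p\underline q^{1-p}\|V\|_p^p$ uniformly in $t$, so the energy form $W_p^p\le\int_0^1\!\int|j_t|^p\rho_t^{1-p}\,dy\,dt$ gives the constant $p$ directly, whereas you obtain it from the reparametrization-invariant length form.

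That reparametrization also avoids one inaccuracy in your justification. With your linear parametrization, $\int_0^1\!\int|v_t|^p\rho_t\,dy\,dt=\infty$ for $p\ge2$ whenever $V\ne0$ on a non-null part of $\{f=0\}$, so the superposition hypothesis you quote fails as stated. You can instead apply superposition with an exponent $s\in(1,\min\{p,p/(p-1)\})$, for which the action is finite, and then use Minkowski's integral inequality. Alternatively, cite the metric-derivative bound $|\dot\mu|(t)\le\|v_t\|_{L^p(\rho_t)}$, which needs only $\|v_t\|_{L^p(\rho_t)}\in L^1(0,1)$.
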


\begin{proof}
	Apply Lemma~\ref{lem:flux-negative-wp} to $u=f-q$ and choose $V$ with $\|V\|_p=\|f-q\|_{-1,p}$. Set
	$$
		\rho_t=(1-t^p)f+t^pq,
		\qquad
		j_t=pt^{p-1}V,
		\qquad 0\le t\le1.
	$$
	Then $\partial_t\rho_t+\nabla\!\cdot j_t=0$ and $\rho_t\ge t^pq$. The dynamic formulation of $W_p$ \cite{BenamouBrenier2000,Santambrogio2015} gives
	$$
		W_p^p(f\,dy,q\,dy)
		\le p^p\int_{\Omega_d}|V|^p q^{1-p}
		\le p^p\underline q^{1-p}\|V\|_p^p.
	$$
	Taking the $p$th root proves \eqref{eq:wp-upper-one-sided}. This is a one-sided comparison between $W_p$ and a negative Sobolev norm under a density lower bound; for the quadratic case see also \cite{Peyre2018}, and for finite Wasserstein orders see \cite{NilesWeedBerthet2022}.
\end{proof}

We also use the following measurable approximate projection. Let $d$ be induced by a norm on differences in an affine separable space, let $\mathcal C$ be the admissible set, and fix a countable $d$-dense family $\{f_1,f_2,\ldots\}\subset\mathcal C$. Given $\widetilde f$ and $\delta>0$, select the first index $i$ such that
$$
	d(\widetilde f,f_i)\le\inf_jd(\widetilde f,f_j)+\delta.
$$
Whenever the distances are measurable, this rule is measurable and, for every $f\in\mathcal C$,
\begin{equation}\label{eq:approx-projection}
	d(\widehat f,f)\le2d(\widetilde f,f)+\delta.
\end{equation}
We take $\delta=m^{-1}$. For the $W_1$ estimator, $d$ is the KR norm and rational finitely supported probability laws provide a countable dense family. For the $W_p$ estimator, $d$ is the Neumann negative norm $\|\cdot\|_{-1,p}$ on the separable set of $L^p$ probability densities. In each case the preliminary estimator depends on finitely many empirical wavelet coefficients, so its distance to each fixed candidate is Borel. The projected $W_p$ estimator need not satisfy the density bounds imposed on the true law; those bounds are used only in the one-sided comparison between Wasserstein distance and a negative Sobolev norm and the variance estimate. The projection defines a measurable statistical estimator.

\section{Unconditional upper bounds in \texorpdfstring{$W_1$}{W1}}\label{supp:w1-upper}
The unconditional upper bounds in Theorem~\ref{thm:response-benchmark} use the empirical measure for unrestricted responses and for smooth classes in dimensions one and two. In the smooth high-dimensional case, a truncated wavelet estimator suppresses fine-scale stochastic error and gains one derivative under the Kantorovich--Rubinstein metric.

\begin{proposition}[Unconditional upper bound]\label{prop:response-upper}
	Fix $1\le\kappa<\infty$. Let $m\ge1$ be an integer and let $Y_1,\ldots,Y_m$ be i.i.d. with law $Q$ on $[0,1]^d$. In the unrestricted model, there is an estimator satisfying
	$$
		\bigl\{\E W_1^\kappa(\widehat Q_m,Q)\bigr\}^{1/\kappa}\lesssim_\kappa a_{d,0}(m).
	$$
	If instead $Q\in\Qcal^B_{d,\tau,r,v}$, there is an estimator such that
	$$
		\bigl\{\E W_1^\kappa(\widehat Q_m,Q)\bigr\}^{1/\kappa}\lesssim_\kappa a_{d,\tau}(m).
	$$
	In dimensions $d\le2$, the empirical distribution itself attains this rate on the smooth class.
\end{proposition}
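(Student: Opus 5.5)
We establish the first-moment bound for each estimator and then pass to $\kappa$th moments by concentration. Throughout, $W_1\le\sqrt d$ on the cube, so $W_1^\kappa\le d^{(\kappa-1)/2}W_1$. This inequality is too lossy to give the $\kappa$th-moment rate on its own, so we instead use McDiarmid's inequality. The map $(Y_1,\dots,Y_m)\mapsto W_1(\widehat Q_m,Q)$ has bounded differences. For the empirical measure, changing one sample point moves $\widehat Q_m$ by at most $\sqrt d/m$ in KR norm. For the projected wavelet estimator of the smooth case, replacing $Y_i$ changes each empirical coefficient by $O(2^{jd/2}/m)$. A Lipschitz test function $f$ has coefficient sizes $\lesssim 2^{-j(1+d/2)}$, so the preliminary estimate moves by at most $\sum_{j\le J}2^{-j}/m\lesssim 1/m$ in KR norm. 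The approximate projection \eqref{eq:approx-projection} then changes this by at most a factor of two, up to $\delta$. In both cases McDiarmid gives fluctuations of order $m^{-1/2}$ around the mean, with sub-Gaussian tails. Hence $\{\E W_1^\kappa\}^{1/\kappa}\lesssim_\kappa \E W_1+m^{-1/2}\lesssim a_{d,\tau}(m)$, since $a_{d,\tau}(m)\ge m^{-1/2}$ in every case.

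\emph{First moment, unrestricted class and $d\le2$.} We use the empirical measure and the dyadic multiscale bound. Partition $[0,1]^d$ dyadically. Transporting mass between parent and children at level $j$ costs $2^{-j}$ times the total-variation discrepancy of cell masses, and the final level $J$ adds $2^{-J}$. This gives
\[
\E W_1(\widehat Q_m,Q)\lesssim 2^{-J}+\sum_{j\le J}2^{-j}\sum_{B\in\mathcal D_j}\E|\widehat Q_m(B)-Q(B)|.
\]
By Cauchy--Schwarz, the inner sum is at most $\sqrt{2^{jd}/m}$. The resulting series $\sum_j 2^{j(d/2-1)}m^{-1/2}$ yields $m^{-1/2}$ for $d=1$, $\sqrt{\log m/m}$ for $d=2$ with $2^J\asymp m^{1/2}$, and $m^{-1/d}$ for $d>2$ with $2^{J}\asymp m^{1/d}$. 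This is the Fournier--Guillin / Weed--Bach argument \cite{FournierGuillin2015,WeedBach2019}, and it covers both the unrestricted case and the smooth case with $d\le2$, since $a_{d,\tau}=a_{d,0}$ there.

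\emph{First moment, smooth case with $d>2$.} Set $\widetilde f=\sum_{\Phi}\widehat\alpha\phi+\sum_{j<J}\sum_k\widehat\beta_{j,k}\psi_{j,k}$, where the coefficients are empirical averages, and then apply the measurable KR approximate projection with $\delta=m^{-1}$. By \eqref{eq:approx-projection}, it suffices to bound $\E\|\widetilde f-q\|_{\KR}$. For a $1$-Lipschitz $f$, normalised by subtracting a constant, the vanishing moments give $|\langle f,\psi_{j,k}\rangle|\lesssim 2^{-j(1+d/2)}$. Moreover, $\sum_k|\langle f,\psi_{j,k}\rangle|\cdot|\beta_{j,k}|\lesssim 2^{-j(1+d/2)}2^{jd(1-1/r)}\|\beta_j\|_{\ell_r}$, and the Besov bound \eqref{eq:besov-wavelet-supp} controls $\|\beta_j\|_{\ell_r}\lesssim 2^{-j(\tau+d/2-d/r)}$. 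This gives the bias $\sum_{j\ge J}2^{-j(\tau+1)}\lesssim 2^{-J(\tau+1)}$. For the stochastic term, take the supremum over $f$ levelwise:
\[
\E\sup_f\Bigl|\sum_k\langle f,\psi_{j,k}\rangle(\widehat\beta_{j,k}-\beta_{j,k})\Bigr|\lesssim 2^{-j(1+d/2)}\,\E\sum_k|\widehat\beta_{j,k}-\beta_{j,k}|.
\]
Because of bounded overlap, $\sum_k \E|\widehat\beta_{j,k}-\beta_{j,k}|\le\sum_k(\mathrm{Var})^{1/2}\lesssim (2^{jd}\sum_k\E\psi_{j,k}^2(Y)/m)^{1/2}\lesssim 2^{jd}m^{-1/2}$. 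This holds with no density upper bound, since $\sum_k\psi_{j,k}^2\lesssim 2^{jd}$ pointwise. Summing over $j<J$ gives $m^{-1/2}2^{J(d/2-1)}$, and the scaling coefficients contribute $m^{-1/2}$. Choosing $2^{J(2\tau+d)}\asymp m$ gives $a_{d,\tau}(m)$.

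\emph{Main obstacle.} The delicate step is the levelwise duality bound in the smooth case. Exchanging the supremum over Lipschitz $f$ with the sum over levels must not lose a factor growing with $J$; the bounds above are summable geometric series, so this is fine. The Besov estimate for $r<2$ is also delicate, which is why the $\ell_r$ form of \eqref{eq:besov-wavelet-supp} is used together with Hölder's inequality over the $O(2^{jd})$ coefficients, rather than an $L^2$ argument. A further point to check is that boundary wavelets also annihilate affine functions, as recorded in Appendix~\ref{supp:guide}. This is what makes $|\langle f,\psi_{j,k}\rangle|\lesssim 2^{-j(1+d/2)}$ hold uniformly up to the boundary.
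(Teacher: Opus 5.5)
\textbf{Gap at $d=2$.} The dyadic chaining bound you use cannot give the critical rate. Your arguments for $d=1$, for $d>2$ (dyadic bound for unrestricted laws, truncated wavelet estimator with KR projection for the smooth class), and the McDiarmid extension to $\kappa$th moments otherwise follow the paper.

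In your dyadic bound at $d=2$, each level contributes $2^{-j}\sqrt{2^{2j}/m}=m^{-1/2}$. Summing over $J\asymp\log m$ levels gives $m^{-1/2}\log m$, not $\{\log(em)/m\}^{1/2}$, so your evaluation of $\sum_{j\le J}2^{j(d/2-1)}m^{-1/2}$ at $d=2$ is incorrect. The loss is built into the method. For the uniform law, the level sums $\sum_{B\in\mathcal D_j}\E|\widehat Q_m(B)-Q(B)|$ really are of order $2^jm^{-1/2}$ while $4^j\lesssim m$. Dyadic chaining is therefore off by $\sqrt{\log m}$ there, which is why \cite{FournierGuillin2015} get $m^{-1/2}\log(1+m)$ for $p=1$, $d=2$. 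Your closing remark that the level sums are geometric holds for $d\ne2$ but fails at $d=2$, where all levels contribute equally. The same defect affects the smooth class at $d=2$, which the statement assigns to the empirical measure. Your wavelet $\ell^1$ estimate would likewise give only $Jm^{-1/2}$ there.

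The missing idea is to combine the scales in $\ell^2$ rather than by the triangle inequality; this is the content of the Ajtai--Koml\'os--Tusn\'ady bound. The paper imports it from the Fourier-analytic proof in \cite[Theorem~3]{BobkovLedoux2021}, which holds for every law on the square. In outline:
\begin{enumerate}[label=(\roman*)]
\item Smooth both measures at scale $m^{-1/2}$, at a $W_1$ cost of $O(m^{-1/2})$.
\item Bound the KR norm of the smoothed difference by its negative Sobolev norm $\|\cdot\|_{-1,2}$; a $1$-Lipschitz test function has $\|\nabla f\|_{L^2}\le1$.
\item Every Fourier or cosine coefficient of the empirical fluctuation has second moment $O(m^{-1})$ uniformly in $Q$. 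Hence the expected squared negative norm is $\lesssim m^{-1}\sum_{0<|k|\lesssim m^{1/2}}|k|^{-2}\asymp m^{-1}\log m$.
\end{enumerate}
You need to cite or reproduce such an argument; no choice of $J$ repairs the dyadic route.

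\textbf{Smaller point.} For the projected wavelet estimator, $W_1(\widehat Q_m,Q)$ itself need not have bounded differences, because the first-index selection rule is not Lipschitz in $\widetilde q_J$. Instead, apply McDiarmid to $Z_m=\|(\widetilde q_J-q)\,dy\|_{\KR}$. Its differences are $O(1/m)$, since only $O(1)$ wavelets per level see the replaced point. Then transfer the moments through the pointwise bound $W_1(\widehat Q_m,Q)\le2Z_m+m^{-1}$, as the paper does.
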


The smooth high-dimensional argument uses the following negative-order wavelet estimate.

\begin{lemma}[Endpoint wavelet upper bound]\label{lem:kr-wavelet-upper}
	Let $w$ be a mean-zero integrable function on $[0,1]^d$, with coefficients in the boundary wavelet basis $(\alpha_w,\beta_{w,\psi})$. Then
	\begin{equation}\label{eq:kr-wavelet-upper}
		\|w\,dy\|_{\KR}
		\le C\left\{
		\|\alpha_w\|_{\ell^1}
		+\sum_{j\ge j_0}2^{-j(1+d/2)}\|\beta_{w,j}\|_{\ell^1}
		\right\}.
	\end{equation}
\end{lemma}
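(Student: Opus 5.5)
We argue through the Kantorovich--Rubinstein duality \eqref{eq:kr-norm-prelim}. Fix a test function $f$ with $\operatorname{Lip}(f)\le1$. Since $w$ has zero mean, we may replace $f$ by $f-f(y_0)$ for a fixed $y_0\in[0,1]^d$. This gives $\|f\|_\infty\le\sqrt d$, and $\int wf$ is unchanged. Expanding $w$ in the orthonormal boundary basis, and assuming convergence is justified (for example by first treating $w\in L^2$ and then approximating), we obtain
$$
\int wf=\sum_{\phi\in\Phi}\alpha_{w,\phi}\langle f,\phi\rangle+\sum_{j\ge j_0}\sum_{\psi\in\Psi_j}\beta_{w,\psi}\langle f,\psi\rangle .
$$
Applying H\"older's inequality scale by scale, with $\ell^1$ on the coefficients of $w$ and $\ell^\infty$ on the coefficients of $f$, reduces the proof to two uniform coefficient bounds:
$$
\sup_{\phi\in\Phi}|\langle f,\phi\rangle|\le C
\qquad\text{and}\qquad
\sup_{\psi\in\Psi_j}|\langle f,\psi\rangle|\le C2^{-j(1+d/2)} .
$$
Taking the supremum over $f$ then yields \eqref{eq:kr-wavelet-upper}.

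The scaling bound is immediate. $\Phi$ is a fixed finite family of bounded compactly supported functions, so $|\langle f,\phi\rangle|\le\|f\|_\infty\|\phi\|_{L^1}\le C$.

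For a detail wavelet $\psi\in\Psi_j$, we use two properties recorded in Appendix~\ref{supp:guide}. First, every detail wavelet, including boundary ones, annihilates affine functions, and in particular constants. Second, $\psi$ is supported in a set $S_\psi$ of diameter at most $C2^{-j}$, with $\|\psi\|_\infty\lesssim2^{jd/2}$, so $\|\psi\|_{L^1}\lesssim2^{-jd/2}$. Choose a point $y_\psi\in S_\psi$. Orthogonality of $\psi$ to constants gives
$$
|\langle f,\psi\rangle|=\Bigl|\int (f-f(y_\psi))\psi\Bigr|\le \sup_{y\in S_\psi}|f(y)-f(y_\psi)|\,\|\psi\|_{L^1}\lesssim 2^{-j}\cdot2^{-jd/2},
$$
which is the required bound. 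Only orthogonality to constants is needed here. Orthogonality to affine functions would give more decay, but a Lipschitz $f$ has only first-order regularity, so this is the endpoint estimate.

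I expect the main obstacle to be bookkeeping rather than analysis. The support-size and sup-norm bounds $\operatorname{diam}(\operatorname{supp}\psi)\lesssim2^{-j}$ and $\|\psi\|_\infty\lesssim2^{jd/2}$ must hold uniformly for the boundary-adapted wavelets of the Cohen--Daubechies--Vial construction; this follows from the construction, in which boundary wavelets are finite combinations of dilated fixed profiles. The second point is the interchange of sum and integral for a merely integrable $w$. I would handle this by observing that the right-hand side of \eqref{eq:kr-wavelet-upper} is finite only when the coefficient series converges absolutely against bounded $f$. We may therefore first prove the identity for $w\in L^2$, where the expansion converges in $L^2$ and $f\in L^2$. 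The general case follows by truncating the expansion and using lower semicontinuity of the KR norm under $L^1$ convergence of mean-zero densities.
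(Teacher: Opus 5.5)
Your proposal is correct and is essentially the paper's proof. You normalize the $1$-Lipschitz test function at a point so the finitely many scaling coefficients are uniformly bounded, then bound each detail coefficient by $2^{-j}\|\psi\|_{L^1}\lesssim 2^{-j(1+d/2)}$ using $\int\psi=0$ and the $O(2^{-j})$ support diameter. The only difference is the limiting step. The paper expands the test function rather than $w$: it writes $\int w\varphi=\lim_J\int w\,P_J\varphi$ for the finite wavelet projections $P_J\varphi\to\varphi$ uniformly, which handles integrable $w$ in one step. Your route, first $L^2$ and then $L^1$ convergence of the truncated expansion of $w$, also works. One small correction: your aside about absolute convergence should refer to Lipschitz $f$, not merely bounded $f$.
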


\begin{proof}
	Because $w$ has mean zero, a $1$-Lipschitz test function $\varphi$ may be normalized by $\varphi(y_0)=0$. The finite scaling contribution is then uniformly bounded. For a detail wavelet $\psi$ at level $j$, choose $y_\psi\in\operatorname{supp}(\psi)$. Using $\int\psi=0$,
	$$
		|\langle\varphi,\psi\rangle|
		=\left|\int(\varphi(y)-\varphi(y_\psi))\psi(y)\,dy\right|
		\le C2^{-j}\|\psi\|_1
		\le C2^{-j(1+d/2)}.
	$$
	Applying this coefficient estimate to finite projections onto the boundary wavelet basis of $\varphi$ and passing to the uniform limit proves \eqref{eq:kr-wavelet-upper}.
\end{proof}

\begin{proof}[Proof of Proposition~\ref{prop:response-upper}]
	We first prove the assertions for $\kappa=1$.
	For the unrestricted branch, let $Q_m=m^{-1}\sum_{i=1}^m\delta_{Y_i}$. Standard empirical Wasserstein estimates \cite{FournierGuillin2015,WeedBach2019} give
	$$
		\E W_1(Q_m,Q)\lesssim a_{d,0}(m),
	$$
	where the two-dimensional case has the critical rate $\sqrt{\log(em)/m}$ by \cite[Theorem~3]{BobkovLedoux2021}. Since $a_{d,\tau}=a_{d,0}$ for $d\le2$, this also proves the smooth upper bound in low dimensions.

	Suppose now that $d>2$ and $\tau>0$. Let $\Phi$ and $\Psi_j$ be the boundary wavelet families fixed in \AppendixRef{supp:guide} and define the empirical projection
	$$
		\widetilde q_J
		=\sum_{\zeta\in\Phi}\widehat\alpha_\zeta\zeta
		+\sum_{j=j_0}^J\sum_{\psi\in\Psi_j}\widehat\beta_\psi\psi.
	$$
	The coefficients are empirical wavelet averages and hence unbiased for the coefficients of $q$. Write $q_J=\E\widetilde q_J$ for the population projection. Because constants belong to the scaling space, $\int\widetilde q_J=1$.

	The characterization by boundary wavelets implies
	$$
		\|\beta_{q,j}\|_{\ell^1}
		\le C2^{-j\tau}2^{jd/2},
	$$
	and Lemma~\ref{lem:kr-wavelet-upper} converts this into
	\begin{equation}\label{eq:kr-bias}
		\|(q-q_J)dy\|_{\KR}
		\le C2^{-J(\tau+1)}.
	\end{equation}
	For the variance term, finite overlap and $\int q=1$ give
	$$
		\sum_{\psi\in\Psi_j}\operatorname{Var}(\widehat\beta_\psi)
		\le\frac1m\int\sum_{\psi\in\Psi_j}|\psi(y)|^2q(y)\,dy
		\lesssim\frac{2^{jd}}m.
	$$
	Cauchy--Schwarz therefore yields
	$$
		\E\|\widehat\beta_j-\beta_{q,j}\|_{\ell^1}
		\le |\Psi_j|^{1/2}
		\left\{\sum_{\psi\in\Psi_j}\operatorname{Var}(\widehat\beta_\psi)\right\}^{1/2}
		\lesssim m^{-1/2}2^{jd}.
	$$
	No density upper bound is used in this calculation. Another application of Lemma~\ref{lem:kr-wavelet-upper} yields
	\begin{equation}\label{eq:kr-variance}
		\E\|\{\widetilde q_J-q_J\}dy\|_{\KR}
		\lesssim m^{-1/2}2^{J(d/2-1)}.
	\end{equation}
	Choose $J=J_m$ so that $2^{J(2\tau+d)}\asymp m$. Then \eqref{eq:kr-bias}--\eqref{eq:kr-variance} give
	$$
		\E\|(\widetilde q_J-q)dy\|_{\KR}
		\lesssim a_{d,\tau}(m).
	$$
	Applying the measurable approximate metric projection rule \eqref{eq:approx-projection} in the KR metric to the set of probability laws, with $\delta=m^{-1}$, gives a probability estimator satisfying
	$$
		W_1(\widehat Q_m,Q)
		\le2\|(\widetilde q_J-q)dy\|_{\KR}+m^{-1},
	$$
	establishing the smooth high-dimensional upper bound for sufficiently large $m$.

	We now extend these bounds to fixed finite moments. For the empirical estimators above, let $Z_m=W_1(Q_m,Q)$. Replacing one observation changes $Z_m$ by at most $\sqrt d/m$, by coupling the unchanged empirical atoms. For the smooth high-dimensional estimator, set $Z_m=\|(\widetilde q_J-q)dy\|_{\KR}$. Lemma~\ref{lem:kr-wavelet-upper} and finite overlap of the wavelet supports show that replacing one observation changes $Z_m$ by at most
$$
 \frac{C}{m}\left\{1+\sum_{j=j_0}^J
 2^{-j(1+d/2)}2^{jd/2}\right\}\le\frac{C'}{m}.
$$
The constants are independent of $J$. The bounded differences inequality \cite{McDiarmid1989} and integration of its upper tail therefore give, in each case,
$$
 \|Z_m\|_{L^\kappa(\mathbb P)}
 \le \E Z_m+C_\kappa m^{-1/2}
 \lesssim_\kappa a_{d,\tau}(m),
$$
where $\tau=0$ in the unrestricted model. The final inequality uses the preceding first-moment bounds and $a_{d,\tau}(m)\ge m^{-1/2}$. In the smooth high-dimensional case, \eqref{eq:approx-projection} transfers this bound to $W_1(\widehat Q_m,Q)$. Thus the same estimators satisfy
\begin{equation}\label{eq:response-w1-moments}
 \bigl\{\E W_1^\kappa(\widehat Q_m,Q)\bigr\}^{1/\kappa}
 \lesssim_\kappa a_{d,\tau}(m).
\end{equation}
In the smooth high-dimensional case, use a fixed probability law on the cube for the finitely many remaining positive sample sizes. Since $W_1$ is bounded by the cube diameter and $a_{d,\tau}(m)>0$, increasing the constant extends \eqref{eq:response-w1-moments} to every integer $m\ge1$, as required in Proposition~\ref{prop:partition}.
\end{proof}

\section{Unconditional lower bounds}\label{app:response-lower}
A single compactly supported potential hypercube is used in every response dimension. It supplies the testing, separation, and attenuation estimates required below, with the dimensional transition encoded by the spectral sum
\begin{equation}\label{eq:BJ-def}
	B_J=\sum_{j=j_0}^J2^{j(d-2)}.
\end{equation}
Thus $B_J\asymp1$ when $d=1$, $B_J\asymp J$ when $d=2$, and $B_J\asymp2^{J(d-2)}$ when $d>2$.

\subsection{Testing and metric decoding}
Write $a_m:=a_{d,\tau}(m)$. For probability densities $f,g$, we use $H^2(f\,dy,g\,dy)=\int(\sqrt f-\sqrt g)^2\,dy$ for the squared Hellinger distance. We combine a metric decoder with a hypercube of distributions. One-coordinate neighbors are statistically close, whereas accumulated coordinate errors yield $W_1$ separation of order $a_m$.

\begin{lemma}[Hypercube decoder]\label{lem:decoder}
	Let $(\mathcal M,d)$ be a metric space, let $\{P_\theta:\theta\in\{-1,1\}^M\}$ be a family of experiments, and associate to each vertex a point $f_\theta\in\mathcal M$. Assume that every single-coordinate flip satisfies
	$$
		\TV(P_\theta,P_{\theta^{(u)}})\le \delta_{\rm TV}<1.
	$$
	Given any estimator $\widehat f$, choose a measurable nearest vertex $\widehat\theta$, using deterministic tie breaking. Then
	$$
		d(f_\theta,f_{\widehat\theta})\le2d(f_\theta,\widehat f).
	$$
	Let $\pi$ be the uniform prior on the hypercube, and let $\E_\pi$ denote expectation over $\theta\sim\pi$ and the data drawn from $P_\theta$. Then, for every coordinate $u$,
	\begin{align}\label{eq:coordinate-error}
		\E_\pi\1\{\widehat\theta_u\ne\theta_u\}\ge\frac{1-\delta_{\rm TV}}{2}.
	\end{align}
\end{lemma}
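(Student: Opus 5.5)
The proof has two independent parts: a deterministic triangle inequality for the decoder, and Le Cam's two-point bound applied coordinatewise under the uniform prior.

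\emph{Decoding inequality.} Let $\widehat\theta$ be a nearest vertex to $\widehat f$, so that $d(f_{\widehat\theta},\widehat f)\le d(f_\theta,\widehat f)$ for every $\theta$. Then
$$
d(f_\theta,f_{\widehat\theta})\le d(f_\theta,\widehat f)+d(\widehat f,f_{\widehat\theta})\le 2d(f_\theta,\widehat f).
$$
Measurability is not a problem. The hypercube is finite, and each map $\omega\mapsto d(f_\theta,\widehat f(\omega))$ is measurable, as assumed for the estimator in the applications. Choosing the minimizing vertex with a fixed deterministic order on $\{-1,1\}^M$ for ties is therefore a measurable selection: each event $\{\widehat\theta=\theta\}$ is a finite Boolean combination of comparisons between measurable distances.

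\emph{Coordinate error bound.} Fix a coordinate $u$ and pair each vertex $\theta$ having $\theta_u=1$ with its flip $\theta^{(u)}$. These pairs partition the hypercube, and each vertex carries prior mass $2^{-M}$. Hence
$$
\E_\pi\1\{\widehat\theta_u\ne\theta_u\}=2^{-M}\sum_{\theta:\theta_u=1}\Bigl[P_\theta(\widehat\theta_u=-1)+P_{\theta^{(u)}}(\widehat\theta_u=1)\Bigr].
$$
For each pair, let $A=\{\widehat\theta_u=1\}$. The bracket equals $1-\{P_\theta(A)-P_{\theta^{(u)}}(A)\}$, which is at least $1-\TV(P_\theta,P_{\theta^{(u)}})\ge1-\delta_{\rm TV}$. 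There are $2^{M-1}$ pairs, so the sum gives
$$
\E_\pi\1\{\widehat\theta_u\ne\theta_u\}\ge 2^{-M}\cdot 2^{M-1}(1-\delta_{\rm TV})=\frac{1-\delta_{\rm TV}}{2},
$$
which is \eqref{eq:coordinate-error}.

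Neither step has a genuine obstacle. The only point needing care is that $\widehat\theta$ is defined from the data through $\widehat f$ alone and never uses $\theta$. This makes $A$ the same event under both $P_\theta$ and $P_{\theta^{(u)}}$, which the total variation comparison requires.
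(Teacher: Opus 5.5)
Your proof is correct and follows the paper's argument. The first claim comes from nearest-vertex optimality plus the triangle inequality. The coordinate bound comes from pairing each vertex with its $u$th-coordinate flip, noting that the two error probabilities of the test $\{\widehat\theta_u=1\}$ sum to at least $1-\TV(P_\theta,P_{\theta^{(u)}})$, and averaging over the $2^{M-1}$ pairs; your explicit bracket computation and measurable tie-breaking remark fill in details the paper leaves implicit.
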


\begin{proof}
	Nearest vertex optimality and the triangle inequality give the first claim. For the second, fix $u$ and pair every vertex with its $u$th-coordinate flip. The decision $\widehat\theta_u$ tests the two paired experiments, so the sum of its two error probabilities is at least $1-\TV(P_\theta,P_{\theta^{(u)}})$. Averaging over the pairs proves \eqref{eq:coordinate-error}.
\end{proof}

\subsection{Multiscale potential estimates}\label{app:potential-tools}
The same analytic argument applies in every dimension. We use quantitative Lipschitz truncation together with the standard positive-order wavelet square function equivalence.

\begin{proof}[Proof of Proposition~\ref{prop:kr-lusin}]
	Extend $U$ by zero to $\mathbb R^d$. Compact support gives $\int g=0$ and permits integration by parts against Lipschitz test functions.

	The case $U=0$ is immediate. Otherwise write $E_2=\int|\nabla U|^2$ and $E_4=\int|\nabla U|^4$. For any $\lambda>0$, apply \cite[Theorem~2.3]{DieningMalekSteinhauer2008} on the cube with an amplitude cutoff larger than $\|U\|_\infty$. After zero extension and a fixed rescaling of the truncation parameter, this gives a function $\phi$ and an exceptional set $E=\{U\ne\phi\}$ satisfying
	$$
		\operatorname{Lip}(\phi)\le\lambda,
		\qquad |E|\le C_dE_4\lambda^{-4},
		\qquad \nabla\phi=\nabla U\quad\text{a.e. on }E^c.
	$$
	The amplitude cutoff contributes no exceptional set. The bound for $|E|$ follows from $L^4$ boundedness of the Hardy--Littlewood maximal operator applied to $\nabla U$, and the gradient agreement follows from locality of weak gradients. H\"older's inequality gives
	$$
		\begin{aligned}
		\left|\int_E\nabla U\cdot(\nabla\phi-\nabla U)\right|
		&\le E_4^{1/2}|E|^{1/2}+\lambda E_4^{1/4}|E|^{3/4}\\
		&\le C E_4\lambda^{-2}.
		\end{aligned}
	$$
	Integration by parts therefore yields
	$$
		\int(-\Delta U)\phi
		\ge E_2-C E_4\lambda^{-2}.
	$$
	Choose $\lambda^2=C_0E_4/E_2$ with $C_0$ sufficiently large. Then $\int g\phi\ge E_2/2$, and $\phi/\lambda$ is an admissible test function for the KR norm. Hence
	$$
		\|(-\Delta U)dy\|_{\KR}
		\ge c_d\frac{E_2^{3/2}}{E_4^{1/2}},
	$$
	which proves \eqref{eq:kr-lusin-main}.
\end{proof}

\begin{lemma}[Positive-order square function for the interior family]\label{lem:positive-interior-wavelet}
	Let $1<s<\infty$ and
	$$
		V=\sum_{j=j_0}^J\sum_{k\in\mathcal K_j}c_{j,k}\psi_{j,k}.
	$$
	All retained $d$-dimensional wavelets are supported in a common compact cube $K_0\Subset(0,1)^d$. Then
	\begin{align}\label{eq:positive-interior-wavelet}
		\|\nabla V\|_{L^s([0,1]^d)}
		\asymp_s
		\left\|
		\left(
		\sum_{j,k}2^{2j}|c_{j,k}|^2|\psi_{j,k}|^2
		\right)^{1/2}
		\right\|_{L^s([0,1]^d)},
	\end{align}
	where the constants do not depend on $J$ or on the coefficient array.
\end{lemma}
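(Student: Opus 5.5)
The plan is to reduce the estimate to the standard Littlewood--Paley square-function characterization of $W^{1,s}$ (equivalently $\dot F^{1}_{s,2}$) on $\mathbb R^d$, after extending $V$ by zero. Since every retained wavelet is supported in $K_0\Subset(0,1)^d$, $V$ is compactly supported and both sides of \eqref{eq:positive-interior-wavelet} are unchanged if we integrate over $\mathbb R^d$ instead of $[0,1]^d$. The interior CDV wavelets coincide with (translates and dilates of) the wavelets of a fixed smooth compactly supported orthonormal wavelet basis of $L^2(\mathbb R^d)$, whose regularity and number of vanishing moments exceed $K>\tau+2\ge2$. Such a basis is an unconditional basis of $\dot W^{1,s}(\mathbb R^d)=\dot F^1_{s,2}$ for $1<s<\infty$. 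This gives the two-sided equivalence
$$
\|\nabla V\|_{L^s(\mathbb R^d)}\asymp_s\Bigl\|\Bigl(\sum_{j,k}2^{2j}|c_{j,k}|^2\,2^{jd}\1_{Q_{j,k}}\Bigr)^{1/2}\Bigr\|_{L^s},
$$
where $Q_{j,k}$ is the dyadic cube attached to $\psi_{j,k}$. The constants depend only on $s$, $d$ and the wavelet, not on $J$ or the coefficients; see for instance Meyer's or Cohen's treatment of wavelet characterizations of Triebel--Lizorkin spaces.

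The remaining step is to replace the indicator square function by the one in \eqref{eq:positive-interior-wavelet}, with $|\psi_{j,k}|^2$ in place of $2^{jd}\1_{Q_{j,k}}$. For the direction $\lesssim$ from the $\psi$-version to the indicator version, we use $|\psi_{j,k}|^2\le C2^{jd}\1_{\widetilde Q_{j,k}}$, where $\widetilde Q_{j,k}$ is a fixed dilate of $Q_{j,k}$. The Fefferman--Stein vector-valued maximal inequality, applied to $\{2^{j}|c_{j,k}|2^{jd/2}\1_{Q_{j,k}}\}$ with $\1_{\widetilde Q}\lesssim (M\1_{Q})^{\lambda}$ for a suitable power, lets us pass between dilated and undilated cubes in $L^s(\ell^2)$. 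For the converse direction, $|\psi_{j,k}|\ge c\,2^{jd/2}$ only on a small subcube $Q'_{j,k}$ of comparable size, fixed by the mother wavelet up to the finitely many tensor types. The same maximal-function argument compares $\1_{Q_{j,k}}$ with $\1_{Q'_{j,k}}$.

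The main obstacle is this lower comparison, since $\psi_{j,k}$ may vanish on parts of its support. We handle it uniformly using the fact that each mother wavelet is continuous and nonzero. Fix one open cube on which $|\psi|\ge c>0$; its dyadic rescalings give the subcubes $Q'_{j,k}$, with $|Q'_{j,k}|\asymp|Q_{j,k}|$ uniformly. Fefferman--Stein then closes both inequalities, with constants depending only on $s$, $d$, and the fixed wavelet system.
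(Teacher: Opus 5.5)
Your proposal is correct and follows essentially the same route as the paper: extend $V$ by zero, identify the interior wavelets with a fixed whole-space wavelet basis, and invoke the order-one wavelet square-function characterization on $\mathbb R^d$, with constants independent of $J$ and of the coefficients. The differences are only technical. You use the homogeneous $\dot F^1_{s,2}$ characterization, so no Poincar\'e step is needed, whereas the paper uses the inhomogeneous $W^{1,s}$ norm and removes $\|V\|_{L^s}$ by Poincar\'e. You also derive the $|\psi_{j,k}|^2$ form from the indicator form via Fefferman--Stein, while the paper takes that form directly from Cohen's (3.8.42)--(3.8.43).
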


\begin{proof}
	Let $\widetilde V$ be the zero extension of $V$ to $\mathbb R^d$. Every retained wavelet is an interior translate of a whole-space wavelet, so its whole-space coefficient is $c_{j,k}$ and all other coefficients, including the scaling coefficients, vanish. The order-one wavelet characterization gives
	\[
	 \|\widetilde V\|_{W^{1,s}(\mathbb R^d)}
	 \asymp_s
	 \left\|\left(\sum_{j,k}2^{2j}|c_{j,k}|^2
	 |\psi_{j,k}|^2\right)^{1/2}\right\|_{L^s(\mathbb R^d)},
	\]
	see \cite[Remark~3.8.5, equations~(3.8.42)--(3.8.43)]{Cohen2003}. The fixed basis has regularity greater than one and at least two vanishing moments, as required at order one.

	Poincar\'e's inequality bounds $\|V\|_{L^s}$ by $C_s\|\nabla V\|_{L^s}$, since $V$ vanishes near the boundary. The zero extension and the square function both vanish outside the response cube, giving \eqref{eq:positive-interior-wavelet}. The constants depend only on the fixed wavelet system, the exponent $s$, and the fixed support region; in particular, they are independent of $J$ and the coefficient array.
\end{proof}

The gradient estimates below give Wasserstein separation through Proposition~\ref{prop:kr-lusin}. The derivative estimates ensure positivity and Besov regularity, while the one-coordinate Hessian estimate controls the Hellinger distance between neighboring alternatives.

\begin{lemma}[Multiscale potential bounds]\label{lem:multiscale-potential}
	Let
	$$
		U_\theta
		=\varepsilon\sum_{j=j_0}^J\sum_{k\in\mathcal K_j}
		\theta_{j,k}2^{-2j}\psi_{j,k},
		\qquad \theta_{j,k}\in\{-1,1\},
	$$
	where $|\mathcal K_j|\asymp2^{jd}$ and the wavelets are the interior family fixed above. Then
	\begin{align}
		\|\nabla U_\theta\|_2^2&\lesssim\varepsilon^2B_J,\label{eq:potential-full-H1}\\
		\|D^{\ell+2}U_\theta\|_\infty&\lesssim_\ell\varepsilon2^{J(d/2+\ell)},\qquad \ell\in\{0,\ldots,K-2\}.\label{eq:potential-Ck}
	\end{align}
	If two vertices disagree in $H_j$ coordinates at resolution $j$ and
	$$
		S=\sum_{j=j_0}^J2^{-2j}H_j,
	$$
	then, for $V=U_\theta-U_{\theta'}$,
	\begin{align}
		\|\nabla V\|_2^2&\gtrsim\varepsilon^2S,\label{eq:potential-pair-H1}\\
		\|\nabla V\|_4^4&\lesssim\varepsilon^4B_JS.\label{eq:potential-pair-H14}
	\end{align}
	For a single-coordinate flip, one also has
	\begin{equation}\label{eq:potential-onebit-H2}
		\|D^2(U_\theta-U_{\theta^{(j,k)}})\|_2^2\lesssim\varepsilon^2.
	\end{equation}
\end{lemma}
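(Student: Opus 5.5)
All five bounds follow from Lemma~\ref{lem:positive-interior-wavelet} applied to the coefficient array $c_{j,k}=\varepsilon\theta_{j,k}2^{-2j}$ (or its difference array), together with elementary pointwise counting. The key fact is finite overlap: at each level $j$ and each point $y$, only $O(1)$ wavelets $\psi_{j,k}$ are nonzero, and $|\psi_{j,k}|\lesssim2^{jd/2}$. Hence
$$
 \sum_{k\in\mathcal K_j}|\psi_{j,k}(y)|^2\lesssim2^{jd},
 \qquad
 \int|\psi_{j,k}|^2=1.
$$

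For \eqref{eq:potential-full-H1}, take $s=2$ in Lemma~\ref{lem:positive-interior-wavelet}. The square function has squared $L^2$ norm
$$
 \sum_{j,k}2^{2j}\varepsilon^22^{-4j}=\varepsilon^2\sum_j2^{-2j}|\mathcal K_j|\asymp\varepsilon^2B_J.
$$
For \eqref{eq:potential-pair-H1}, the coefficients of $V=U_\theta-U_{\theta'}$ equal $\pm2\varepsilon2^{-2j}$ on the $H_j$ differing indices and vanish elsewhere. The same computation gives $\|\nabla V\|_2^2\asymp\varepsilon^2\sum_j2^{-2j}H_j=\varepsilon^2S$.

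For \eqref{eq:potential-onebit-H2}, the difference is the single function $2\varepsilon2^{-2j}\psi_{j,k}$. Scaling gives $\|D^2\psi_{j,k}\|_2\lesssim2^{2j}$, so the bound is $O(\varepsilon^2)$. For \eqref{eq:potential-Ck}, estimate $D^{\ell+2}$ pointwise, level by level. At level $j$, the $O(1)$ active wavelets contribute
$$
 \varepsilon2^{-2j}2^{j(\ell+2)}2^{jd/2}=\varepsilon2^{j(d/2+\ell)}.
$$
This is a geometric sum over $j\le J$ with positive ratio exponent, since $d/2+\ell>0$, so it is dominated by the top level. The wavelets are $C^K$, which justifies differentiating up to order $K$.

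The main step is \eqref{eq:potential-pair-H14}. Apply Lemma~\ref{lem:positive-interior-wavelet} with $s=4$, so that
$$
 \|\nabla V\|_4^4\lesssim\int\Bigl(\sum_{(j,k)\in D}\varepsilon^22^{-2j}|\psi_{j,k}|^2\Bigr)^2,
$$
where $D$ is the set of differing indices. Expand the square as a double sum over pairs $(j,k),(j',k')\in D$. Rather than bounding the full square crudely, bound one factor pointwise by the full sum over all indices: for every $y$,
$$
 \sum_{j',k'}2^{-2j'}|\psi_{j',k'}(y)|^2\lesssim\sum_{j'}2^{-2j'}2^{j'd}=B_J.
$$
Pulling this out leaves
$$
 \varepsilon^4B_J\sum_{(j,k)\in D}2^{-2j}\int|\psi_{j,k}|^2=\varepsilon^4B_JS.
$$
This bounds the $L^4$ energy by one sup-type factor $B_J$ times the $L^2$-type count $S$, which is exactly what is needed.

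The obstacle I expect is making sure the square-function equivalence holds uniformly in $J$ with both directions at $s=4$ and $s=2$; Lemma~\ref{lem:positive-interior-wavelet} supplies this. Beyond that, one must check that the pointwise sup bound $\sum_k|\psi_{j,k}|^2\lesssim2^{jd}$ uses only finite overlap. It does not depend on $d$ relative to $2$, so the single argument covers all three regimes of $B_J$.
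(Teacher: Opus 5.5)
Your proposal is correct and follows essentially the same route as the paper. You use the $s=2$ square-function equivalence of Lemma~\ref{lem:positive-interior-wavelet} for the two $L^2$ gradient bounds, and scaling with finite overlap for \eqref{eq:potential-Ck} and \eqref{eq:potential-onebit-H2}. For \eqref{eq:potential-pair-H14} you use the same estimate $\int N^2\le\|N\|_\infty\int N$, with $\|N\|_\infty\lesssim B_J$ and $\int N=S$; at $s=4$ only the upper direction of the equivalence is actually needed.
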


\begin{proof}
	Lemma~\ref{lem:positive-interior-wavelet} with $s=2$ gives the first and third estimates. The derivative estimate \eqref{eq:potential-Ck} follows from finite overlap together with $\|D^{\ell+2}\psi_{j,k}\|_\infty\lesssim2^{j(d/2+\ell+2)}$.

	To prove \eqref{eq:potential-pair-H14}, write
	$$
		N_d(y)=\sum_{j=j_0}^J\sum_{k\in E_j}2^{-2j}|\psi_{j,k}(y)|^2,
	$$
	where $E_j$ denotes the set of differing coordinates. Lemma~\ref{lem:positive-interior-wavelet} gives $\|\nabla V\|_4^4\lesssim\varepsilon^4\int N_d^2$. Since $\int N_d=S$ by $L^2$ normalization and finite overlap implies $N_d\lesssim B_J$ pointwise, $\int N_d^2\lesssim B_JS$. For a one-coordinate flip, $\|D^2\psi_{j,k}\|_2\asymp2^{2j}$ together with the coefficient $2^{-2j}$ gives the Hessian estimate.
\end{proof}

For $q_\theta=1-\Delta U_\theta$, Proposition~\ref{prop:kr-lusin} and \eqref{eq:potential-pair-H1}--\eqref{eq:potential-pair-H14} give
\begin{equation}\label{eq:multiscale-response-separation}
	W_1(Q_\theta,Q_{\theta'})
	\ge c\frac{\varepsilon}{\sqrt{B_J}}
	\sum_{j=j_0}^J2^{-2j}H_j.
\end{equation}

\subsection{A unified hypercube family}\label{app:response-hypercube-unified}
\begin{proposition}[Least favorable hypercube with localization control]\label{prop:response-hypercube}
	Fix $d\ge1$, $\tau\ge0$, and $1\le r,v\le\infty$, and abbreviate $a_m=a_{d,\tau}(m)$. If $\tau>0$, use the fixed Besov slack from Assumption~\ref{ass:class}. For every sufficiently large $m$, there is a finite hypercube of distributions
	$$
		\{Q_\theta^{(m)}:\theta\in\{-1,1\}^{M_m}\},
	$$
	with densities $q_\theta^{(m)}$ having the following properties.
	\begin{enumerate}[label=(\roman*)]
		\item every density $q_\theta^{(m)}$ and every attenuated density $1+t(q_\theta^{(m)}-1)$, $0\le t\le1$, is bounded between fixed positive constants and can be made uniformly as close to the uniform density as required by choosing the fixed cutoff constant sufficiently small. When $\tau>0$, the entire attenuated family lies in the prescribed $B^\tau_{r,v}$ ball;
		\item with $Q_{\theta,t}$ denoting the attenuated law, for every fixed $1\le p<\infty$,
		$$
			W_p(Q_{\theta,t},Q_{\theta,t'})\le C_p a_m|t-t'|,
			\qquad
			W_p(Q_{\theta,t},\mathrm{Unif})\le C_p a_m.
		$$
		\item every one-coordinate flip satisfies
		$$
			H^2(Q_\theta^{(m)},Q_{\theta^{(u)}}^{(m)})\le C/m;
		$$
		\item if a decoder has coordinate error probability bounded below by a fixed $\eta>0$ on every coordinate, then
		$$
			\E_\pi W_1(Q_\theta^{(m)},Q_{\widehat\theta}^{(m)})\ge c_\eta a_m.
		$$
	\end{enumerate}
\end{proposition}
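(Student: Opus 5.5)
We take $q_\theta^{(m)}=1-\Delta U_\theta$ with $U_\theta$ the multiscale potential \eqref{eq:critical-potential-family-main}, $\varepsilon=cm^{-1/2}$, and $J$ maximal with $\varepsilon2^{J(\tau+d/2)}\le c_0$. The proof checks (i)--(iv) one at a time from Lemma~\ref{lem:multiscale-potential}, Proposition~\ref{prop:kr-lusin}, and Lemma~\ref{lem:decoder}. The attenuated densities are $1+t(q_\theta-1)=1-\Delta(tU_\theta)$.

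\emph{Step (i).} Apply \eqref{eq:potential-Ck} with $\ell=0$. It gives $\|\Delta U_\theta\|_\infty\lesssim\varepsilon2^{Jd/2}\le c_0$, since $\tau\ge0$. So for small $c_0$ every attenuated density lies in $[1/2,3/2]$ and is uniformly close to $1$. For the Besov bound when $\tau>0$, we expand $\Delta\psi_{j,k}$ and treat it as a molecule at level $j$ with $L^\infty$ size $2^{j(d/2+2)}$. Each level then contributes about $\varepsilon2^{j(\tau+d/2)}$ to the Besov norm, using the wavelet characterization \eqref{eq:besov-wavelet-supp} and the finite overlap of supports. Summing this geometric series up to $J$ gives $\lesssim c_0$. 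Since $\|1\|_{B^\tau_{r,v}}<L_Y$ with fixed slack, a small $c_0$ keeps the whole attenuated family inside the ball. The cleanest route is to bound $\|\Delta U_\theta\|_{B^\tau_{\infty,\infty}}$ through the smoothness of $\psi$ and the $\ell$-derivative bounds in \eqref{eq:potential-Ck}, then pass to $B^\tau_{r,v}$ on the bounded domain. The endpoint $v<\infty$ costs at most a factor that is controlled by the geometric decay away from $J$.

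\emph{Step (ii).} All densities are bounded below by $1/2$, so Proposition~\ref{prop:wp-negative} applies for $p>1$, and $W_1\le W_p$ covers $p=1$. It gives $W_p(Q_{\theta,t},Q_{\theta,t'})\lesssim|t-t'|\,\|\Delta U_\theta\|_{-1,p}$. Lemma~\ref{lem:flux-negative-wp} with the flux $V=\nabla U_\theta$ gives $\|\Delta U_\theta\|_{-1,p}\le\|\nabla U_\theta\|_p$; the normal trace vanishes because of the compact support. We then bound $\|\nabla U_\theta\|_p$ by Lemma~\ref{lem:positive-interior-wavelet} with $s=p$. For $p\le2$ we pass to $L^2$ and use \eqref{eq:potential-full-H1}, which gives $\varepsilon\sqrt{B_J}$. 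For $p>2$ the square function is pointwise $\lesssim\varepsilon\sqrt{B_J}$, by finite overlap and $|\psi_{j,k}|^2\lesssim2^{jd}$. Finally \eqref{eq:response-lower-cases-main} gives $\varepsilon\sqrt{B_J}\asymp a_m$, and the bound to $\mathrm{Unif}$ is the case $t'=0$.

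\emph{Step (iii).} The densities are bounded below, so $H^2\le C\|q_\theta-q_{\theta^{(u)}}\|_2^2=C\|\Delta(U_\theta-U_{\theta^{(u)}})\|_2^2\lesssim\varepsilon^2\asymp1/m$ by \eqref{eq:potential-onebit-H2}.

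\emph{Step (iv).} This is the main accounting step. By \eqref{eq:multiscale-response-separation}, $W_1(Q_\theta,Q_{\widehat\theta})\ge c\,\varepsilon B_J^{-1/2}S$ with $S=\sum_j2^{-2j}H_j$ and $H_j$ counting the coordinates where $\widehat\theta$ and $\theta$ disagree. Taking expectations and using coordinate error probability at least $\eta$ gives $\E_\pi S\ge\eta\sum_j2^{-2j}|\mathcal K_j|\gtrsim\eta B_J$. Hence $\E_\pi W_1\gtrsim\eta\,\varepsilon\sqrt{B_J}\asymp a_m$. The only delicate point is that \eqref{eq:multiscale-response-separation} needs $V\ne0$; when $S=0$ both sides vanish, so the inequality holds trivially there.

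I expect the Besov membership in (i) to be the main technical obstacle, because $\Delta\psi_{j,k}$ is not itself a wavelet. I would handle it with the standard molecule or almost-diagonal estimate for compactly supported smooth bumps with vanishing moments, which is available since $K>\tau+2$. The remaining steps are direct applications of the earlier lemmas.
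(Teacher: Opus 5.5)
Your proposal is correct and follows the paper's proof essentially step by step: the same potential family and cutoff, positivity from \eqref{eq:potential-Ck}, the flux bound $\|\Delta U_\theta\|_{-1,p}\le\|\nabla U_\theta\|_p$ combined with the pointwise square-function estimate for (ii), the one-flip Hessian bound for (iii), and \eqref{eq:multiscale-response-separation} with $\E_\pi S\gtrsim_\eta B_J$ for (iv). For the Besov membership, the paper takes your ``cleanest route'' directly. It bounds $\|\Delta_u^\ell(q_\theta-1)\|_{L^r}\lesssim\varepsilon2^{Jd/2}\min\{1,(2^J|u|)^\ell\}$ with $\tau<\ell\le K-2$ and integrates in the finite-difference characterization, which handles every $v$ at once without a molecule estimate or a detour through $B^\tau_{\infty,\infty}$; that detour would not work by embedding alone, since $B^\tau_{\infty,\infty}$ does not embed into $B^\tau_{r,v}$ for $v<\infty$.
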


\begin{proof}
	Set $\varepsilon=cm^{-1/2}$ and use the potential family of Lemma~\ref{lem:multiscale-potential}. For $\tau\ge0$, with $\tau=0$ interpreted as the unrestricted branch, choose $J$ maximal subject to
	\begin{equation}\label{eq:potential-cutoff}
		\varepsilon2^{J(\tau+d/2)}\le c_0,
	\end{equation}
	where $c_0,c>0$ are fixed constants chosen below. Thus
	\begin{equation}\label{eq:potential-J-scale}
		2^J\asymp m^{1/(2\tau+d)}.
	\end{equation}
	Define $q_\theta=1-\Delta U_\theta$. Since every $U_\theta$ is supported in $K_0$, $\int q_\theta=1$. By \eqref{eq:potential-Ck},
	$$
		\|q_\theta-1\|_\infty\lesssim \varepsilon2^{Jd/2}.
	$$
	Thus \eqref{eq:potential-cutoff} ensures uniform lower and upper bounds on the densities.

	When $\tau>0$, choose an integer $\ell$ with $\tau<\ell\le K-2$. The same derivative estimate yields
	$$
		\|D^\ell(q_\theta-1)\|_\infty
		\lesssim_\ell
		\varepsilon2^{J(d/2+\ell)}.
	$$
	Because $q_\theta-1$ is supported a fixed positive distance from the boundary, its zero extension obeys
	$$
		\|\Delta_u^\ell(q_\theta-1)\|_{L^r}
		\lesssim
		\varepsilon2^{Jd/2}
		\min\{1,(2^J|u|)^\ell\},
		\qquad 1\le r\le\infty.
	$$
	The characterization by finite differences of $B^\tau_{r,v}$ now gives
	\begin{equation}\label{eq:potential-response-besov}
		\|q_\theta-1\|_{B^\tau_{r,v}}
		\lesssim
		\varepsilon2^{J(\tau+d/2)}.
	\end{equation}
	Multiplying $U_\theta$ by any $t\in[0,1]$ leaves these estimates unchanged up to constants, which proves part (i).

	Under a one-coordinate flip, the density difference is $-\Delta(U_\theta-U_{\theta^{(u)}})$. Its $L^2$ norm is $O(\varepsilon)$ by \eqref{eq:potential-onebit-H2}; uniform positivity then gives the Hellinger bound in part (iii). Choose $c_0$ sufficiently small for the prescribed density bounds and, when $\tau>0$, the Besov slack. With $c_0$ fixed, choose $c$ sufficiently small that
	$$
		\TV\bigl(Q_\theta^{\otimes m},Q_{\theta^{(u)}}^{\otimes m}\bigr)\le1/2.
	$$
	The constants in the preceding bounds are independent of $J$ and $\theta$, so these choices satisfy the density, regularity, and testing constraints simultaneously.

	For the localization estimate, integration by parts together with \eqref{eq:potential-full-H1} yields
	$$
		\|(q_\theta-1)dy\|_{\KR}
		\le\|\nabla U_\theta\|_1
		\le\|\nabla U_\theta\|_2
		\lesssim \varepsilon\sqrt{B_J}.
	$$
	Replacing $U_\theta$ by $(t-t')U_\theta$ proves part (ii) for $p=1$. For fixed $p>1$, all attenuated densities are bounded below by a common positive constant by part (i). Integration by parts gives
	$$
		\|-(t-t')\Delta U_\theta\|_{-1,p}
		\le |t-t'|\,\|\nabla U_\theta\|_{L^p}.
	$$
	Lemma~\ref{lem:positive-interior-wavelet} and finite overlap give $\|\nabla U_\theta\|_{L^p}\lesssim_p\varepsilon\sqrt{B_J}$: the square function in \eqref{eq:positive-interior-wavelet} is pointwise at most $C\varepsilon\sqrt{B_J}$. Proposition~\ref{prop:wp-negative} therefore gives
	$$
		W_p(Q_{\theta,t},Q_{\theta,t'})
		\lesssim_p |t-t'|\,\varepsilon\sqrt{B_J},
		\qquad
		W_p(Q_{\theta,t},\mathrm{Unif})
		\lesssim_p \varepsilon\sqrt{B_J}.
	$$
	Thus part (ii) holds for every fixed finite $p$. Moreover, \eqref{eq:BJ-def} and \eqref{eq:potential-J-scale} give
	\begin{equation}\label{eq:potential-radius-cases}
		\varepsilon\sqrt{B_J}\asymp
		\begin{cases}
			m^{-1/2},&d=1,\\[1mm]
			\{\log(em)/m\}^{1/2},&d=2,\\[1mm]
			m^{-(\tau+1)/(2\tau+d)},&d>2,
		\end{cases}
	\end{equation}
	which is of order $a_m$ by construction.

	For part (iv), let $H_j$ count coordinate errors at level $j$ and put $S=\sum_j2^{-2j}H_j$. The coordinatewise testing assumption implies
	$$
		\E_\pi S\gtrsim_\eta
		\sum_j2^{-2j}|\mathcal K_j|
		\asymp B_J.
	$$
	Equation~\eqref{eq:multiscale-response-separation} then gives
	$$
		\E_\pi W_1(Q_\theta,Q_{\widehat\theta})
		\gtrsim_\eta \varepsilon\sqrt{B_J}
		\asymp a_m,
	$$
	proving (iv).
\end{proof}

\subsection{Completion of the response theorem}

\begin{proof}[Proof of Theorem~\ref{thm:response-benchmark}]
	For the upper bound, Proposition~\ref{prop:response-upper} and \eqref{eq:response-w1-moments} give the asserted $\kappa$th-moment rate in both the unrestricted and smooth classes.

	For the minimax lower bound, use the hypercube in Proposition~\ref{prop:response-hypercube} and decode any estimator to a nearest vertex in $W_1$. The one-coordinate Hellinger estimate and Lemma~\ref{lem:decoder} ensure a uniformly positive coordinate error probability. Hence Proposition~\ref{prop:response-hypercube}(iv) gives
	$$
		\E_\pi W_1(Q_\theta,Q_{\widehat\theta})\ge ca_{d,\tau}(m).
	$$
	Nearest vertex optimality implies $W_1(Q_\theta,Q_{\widehat\theta})\le2W_1(Q_\theta,\widehat Q)$. Jensen's inequality therefore yields a Bayes lower bound of order $a_{d,\tau}(m)^\kappa$ under the uniform prior. Since minimax risk dominates this Bayes risk, the required lower bound follows.
\end{proof}

\section{Conditional localization with unrestricted covariate distributions}\label{app:conditionalization}
We prove the partition bound used in Section~\ref{sec:conditionalization-main}, then derive the lower bound under uniform covariates and the adaptive upper bound for higher moments. The argument applies to the two $W_1$ classes and, with the unconditional estimates of \AppendixRef{supp:w2-analytic}, to the $W_p$ density classes. A separate two-point construction gives the rare region lower bound.

\subsection{Partition bound weighted by cell probability}\label{app:partition-bound}
Let $\Pi$ be any finite measurable partition of $\X$. For each $B\in\Pi$, set
$$
	\pi_B=P_X(B),\qquad
	N_B=\sum_{i=1}^n\1\{X_i\in B\},
	\qquad
	Q_B=P_{Y\mid X\in B}\quad(\pi_B>0).
$$
For an unconditional rate $a=a_{d_y,\tau}$ and a fixed finite $1\le\kappa<\infty$, define the capped moment rate
\begin{align}\label{eq:capped-response-envelope}
	r_\kappa(t)=
	\begin{cases}
		1,&0\le t<1,\\
		a(t)^\kappa,&t\ge1.
	\end{cases}
\end{align}
Changing the value on a bounded interval affects only constants.

\begin{lemma}[Binomial transfer]\label{lem:binomial}
	Let $N\sim\mathrm{Bin}(n,\pi)$ and $\lambda=n\pi$. For every fixed finite $1\le\kappa<\infty$,
	\begin{align}\label{eq:bin-transfer}
		\E\!\left[a(N)^\kappa\1\{N\ge2\}+\1\{N<2\}\right]
		\le C_\kappa r_\kappa(\lambda),
	\end{align}
	where $a(N)$ on $N\ge2$ denotes any of the unconditional rates in \eqref{eq:response-root-intro} evaluated at the integer $N$.
\end{lemma}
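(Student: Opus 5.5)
We split according to whether $\lambda=n\pi$ is small or large. If $\lambda<C_0$ for a fixed constant $C_0\ge1$, the left side of \eqref{eq:bin-transfer} is at most one: $a(N)\le a(2)\le1$ up to a fixed constant on $N\ge2$, and the indicator is at most one. Meanwhile $r_\kappa(\lambda)\ge\min\{1,a(C_0)^\kappa\}$, which is a positive constant. So the bound holds with $C_\kappa$ depending only on $C_0$ and $\kappa$. Changing the value of $r_\kappa$ on a bounded interval only affects constants, so we may take $C_0$ as large as needed.

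For $\lambda\ge C_0$, split according to the event $G=\{N\ge\lambda/2\}$. Each $a=a_{d,\tau}$ in \eqref{eq:response-root-intro} is nonincreasing on $[1,\infty)$ for $d\ne2$. In the case $d=2$, $\sqrt{\log(et)/t}$ is decreasing for $t\ge1$ as well. Each rate is also doubling: $a(t/2)\le C a(t)$ for $t\ge2$. Hence on $G$ we have $N\ge2$ once $C_0\ge4$, and
\[
a(N)^\kappa\le a(\lambda/2)^\kappa\le C^\kappa a(\lambda)^\kappa=C^\kappa r_\kappa(\lambda).
\]
On $G^c$ the integrand is at most a constant $c_1=\max\{1,\sup_{t\ge2}a(t)^\kappa\}$. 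The multiplicative Chernoff bound gives $\mathbb P(N<\lambda/2)\le e^{-\lambda/8}$.

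It remains to check that $e^{-\lambda/8}\lesssim_\kappa r_\kappa(\lambda)$ for $\lambda\ge C_0$. Since $a(\lambda)\ge\lambda^{-1}$ in every case (all exponents are at most one), we have $r_\kappa(\lambda)\ge\lambda^{-\kappa}$. Moreover $\sup_{\lambda\ge1}\lambda^\kappa e^{-\lambda/8}<\infty$ for fixed $\kappa$. Combining the two events yields \eqref{eq:bin-transfer}.

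The only point requiring care is the monotonicity and doubling property of the rates, particularly the logarithmic branch $d=2$ near $t=1$. This is handled by enlarging $C_0$, so the Chernoff step is routine.
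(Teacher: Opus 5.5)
Your proof is correct and follows essentially the same route as the paper. You absorb bounded $\lambda$ into the constant, split on $\{N\ge\lambda/2\}$ using monotonicity and the doubling bound $a(\lambda/2)\le\sqrt2\,a(\lambda)$, and control the complement with a Chernoff bound compared against a polynomial lower bound on $a(\lambda)$. The paper uses $a(\lambda)\ge\lambda^{-1/2}$ where you use the cruder $a(\lambda)\ge\lambda^{-1}$; both suffice for fixed $\kappa$.
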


\begin{proof}
	For bounded $\lambda$, the result follows after increasing $C_\kappa$. Assume henceforth that $\lambda\ge4$ and split according to $N\ge\lambda/2$. The explicit rates in \eqref{eq:response-root-intro} are nonincreasing and satisfy $a(\lambda/2)\le\sqrt2\,a(\lambda)$. Thus, on $\{N\ge\lambda/2\}$,
	$$
		a(N)^\kappa\le a(\lambda/2)^\kappa\le2^{\kappa/2}a(\lambda)^\kappa.
	$$
	On $\{N<\lambda/2\}$, the integrand in \eqref{eq:bin-transfer} is at most one, whereas Chernoff's inequality gives $\Pr(N<\lambda/2)\le e^{-c\lambda}$. Since $a(\lambda)\ge\lambda^{-1/2}$, for fixed finite $\kappa$ we have $e^{-c\lambda}\le C_\kappa\lambda^{-\kappa/2}\le C_\kappa a(\lambda)^\kappa$. Combining the two events proves \eqref{eq:bin-transfer}.
\end{proof}

\begin{proposition}[Partition bound weighted by cell probability]\label{prop:partition}
	Fix $1\le p<\infty$ and finite $1\le\kappa<\infty$. Assume the distribution class is closed under cellwise mixtures and that, from $m\ge2$ i.i.d. responses, one has an estimator satisfying
	$$
		\sup_Q\E W_p^\kappa(\widehat Q_m,Q)\le C_\kappa a_{d_y,\tau}(m)^\kappa.
	$$
	Apply this estimator on each cell with at least two observations, and use any fixed admissible fallback otherwise. Then
	\begin{equation}\label{eq:partition-bound}
		\begin{aligned}
		&\E\int W_p^\kappa(\widehat Q_x,Q_x)\,P_X(dx)\\
		&\quad\le C_\kappa\sum_{B:\pi_B>0}\left\{
		\int_B W_p^\kappa(Q_x,Q_B)\,P_X(dx)
		+\pi_B r_\kappa(n\pi_B)
		\right\}.
		\end{aligned}
	\end{equation}
\end{proposition}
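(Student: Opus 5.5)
We would prove \eqref{eq:partition-bound} cell by cell. Cells with $\pi_B=0$ do not matter, since the $P_X$-integral puts no mass on them. On a cell $B$ with $\pi_B>0$, the estimator $\widehat Q_x=\widehat Q_B$ is constant in $x\in B$. The triangle inequality for $W_p$ together with $(a+b)^\kappa\le2^{\kappa-1}(a^\kappa+b^\kappa)$ gives
$$
W_p^\kappa(\widehat Q_B,Q_x)\le 2^{\kappa-1}\bigl\{W_p^\kappa(Q_x,Q_B)+W_p^\kappa(\widehat Q_B,Q_B)\bigr\}.
$$
Integrating over $B$ against $P_X$ turns the first term into the deterministic bias contribution $\int_B W_p^\kappa(Q_x,Q_B)\,P_X(dx)$. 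The second term contributes $\pi_B\,\E W_p^\kappa(\widehat Q_B,Q_B)$. It therefore remains to show
$$
\E W_p^\kappa(\widehat Q_B,Q_B)\le C_\kappa r_\kappa(n\pi_B),
$$
and then sum over cells.

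For that stochastic term we condition on the covariates $X_1,\dots,X_n$, or equivalently on the index set $I_B=\{i:X_i\in B\}$ with $N_B=|I_B|$. Given $I_B$, the responses $(Y_i)_{i\in I_B}$ are i.i.d. with law $Q_B=P_{Y\mid X\in B}$, because the pairs are i.i.d.

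The key observation is that $Q_B=\int_B Q_u\,P_X(du\mid B)$ is a cellwise mixture of laws in the class. By the closure assumption it therefore lies in the admissible response class, and the unconditional bound applies. On $\{N_B\ge2\}$ this gives
$$
\E\bigl[W_p^\kappa(\widehat Q_B,Q_B)\mid I_B\bigr]\le C_\kappa a(N_B)^\kappa.
$$
On $\{N_B<2\}$ the fallback is used, and $W_p^\kappa(\widehat Q_B,Q_B)\le \operatorname{diam}(\Y)^\kappa=d_y^{\kappa/2}$, a constant.

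Taking expectations and using $N_B\sim\mathrm{Bin}(n,\pi_B)$, Lemma~\ref{lem:binomial} gives
$$
\E W_p^\kappa(\widehat Q_B,Q_B)\le C\,\E\bigl[a(N_B)^\kappa\1\{N_B\ge2\}+\1\{N_B<2\}\bigr]\le C_\kappa r_\kappa(n\pi_B).
$$

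Summing over cells yields \eqref{eq:partition-bound}, with $C_\kappa$ absorbing $2^{\kappa-1}$ and the cube diameter.

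The step needing care is the conditioning. The estimator must be applied to the subsample selected by $X$-membership. Its conditional law given $I_B$ is exactly the $N_B$-sample problem for $Q_B$, and this holds since the selection depends only on the $X_i$. We must also record measurability of $Q_B$ and of $x\mapsto\widehat Q_x$, which is immediate because $\widehat Q$ is piecewise constant on a finite measurable partition. Finally, in the Besov or density-bound classes, closure under cellwise mixtures must hold for the response constraints. This is true by convexity of the Besov ball and of the pointwise density bounds, but it is an assumption of the proposition, so we only invoke it.
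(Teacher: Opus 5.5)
Your proposal is correct and follows essentially the same route as the paper: the $2^{\kappa-1}$ split into pooling bias and a cellwise estimation term, then conditioning so the retained responses are i.i.d.\ $Q_B$, which lies in the class by mixture closure, and finally Lemma~\ref{lem:binomial} to pass from $N_B$ to $r_\kappa(n\pi_B)$. One correction: conditioning on the full covariate vector $X_1,\dots,X_n$ is \emph{not} equivalent to conditioning on $I_B$, since given all the $X_i$ the responses have laws $Q_{X_i}$ rather than $Q_B$; condition only on the membership indicators $\1\{X_i\in B\}$ (or on $N_B$, as the paper does), which is what your argument actually uses.
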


\begin{proof}
	Conditional on $N_B=k$, the responses in cell $B$ are i.i.d. with distribution $Q_B$. The assumed bound for distribution estimation, followed by Lemma~\ref{lem:binomial}, therefore yields
	$$
		\E W_p^\kappa(\widehat Q_B,Q_B)
		\le C_\kappa r_\kappa(n\pi_B).
	$$
	For every $\kappa\ge1$,
	$$
		W_p^\kappa(\widehat Q_B,Q_x)
		\le2^{\kappa-1}\{W_p^\kappa(\widehat Q_B,Q_B)+W_p^\kappa(Q_B,Q_x)\}.
	$$
	Multiplying by $\pi_B$, integrating over the conditional distribution of $x$ in $B$, and summing over the partition yields \eqref{eq:partition-bound}. In the applications below, the required mixture closure is immediate for unrestricted laws; for smooth distribution classes it follows from convexity of the Besov constraint, together with preservation of any stated pointwise density bounds under mixtures.
\end{proof}

\subsection{Replication under the uniform covariate distribution}\label{app:replication}
\begin{proposition}[Replicated hypercube lower bound at $\kappa=1$]\label{prop:replication}
	Fix $\tau\ge0$ and $1\le r,v\le\infty$. Put $a_m=a_{d_y,\tau}(m)$ and take the hypercube family $\{Q_\theta^{(m)}\}$ from Proposition~\ref{prop:response-hypercube}. There exist constants $A_0,c_0>0$ such that, for sufficiently small $h$ and sufficiently large $m$,
	\begin{align}\label{eq:replication-m}
		m=\lfloor A_0nh^{d_x}\rfloor,
		\qquad
		a_m\le c_0h^\beta
	\end{align}
	imply
	\begin{align}\label{eq:replication-lower}
		\mathcal R^\star_{n,1,1}\ge c a_m
	\end{align}
	for the corresponding conditional model. This lower bound already holds when $P_X$ is uniform on $\X$.
\end{proposition}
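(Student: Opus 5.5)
We build a replicated hypercube under uniform $P_X$ and reduce the conditional problem to blockwise testing via Lemma~\ref{lem:decoder}, then sum the per-block separations using Proposition~\ref{prop:response-hypercube}(iv).

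\textbf{Covariate bumps.} Partition $\X$ into $M_x\asymp h^{-d_x}$ disjoint cubes $C_1,\dots,C_{M_x}$ of side $h$. In each cube fix a smooth bump $b_\ell:\X\to[0,1]$ with $\operatorname{Lip}(b_\ell)\lesssim h^{-1}$, supported in $C_\ell$, and equal to $1$ on a concentric plateau $D_\ell$ of side $h/2$. For $\Theta=(\theta^{(1)},\dots,\theta^{(M_x)})$ with each $\theta^{(\ell)}\in\{-1,1\}^{M_m}$, define
$$
Q_x^\Theta=Q^{(m)}_{\theta^{(\ell)},\,b_\ell(x)},\qquad x\in C_\ell,
$$
the attenuated law of Proposition~\ref{prop:response-hypercube}. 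Outside the supports of the bumps this is the uniform law.

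\textbf{Class membership.} Membership in the relevant Besov ball, together with positivity, follows from part (i). For the H\"older condition, part (ii) gives the bound $W_1(Q_x,Q_{x'})\le C a_m|b_\ell(x)-b_\ell(x')|$ when $x,x'$ lie in the same cube. Since both $b_\ell(x)-b_\ell(x')$ and the cross-cube differences are controlled through $\mathrm{Unif}$, we get
$$
W_1(Q_x,Q_{x'})\lesssim a_m\min\{1,|x-x'|/h\}.
$$
Using $\min\{1,s\}\le s^\beta$ and the assumption $a_m\le c_0h^\beta$, this is at most $Cc_0|x-x'|^\beta$. Choosing $c_0$ small makes it at most $L_X|x-x'|^\beta$.

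\textbf{Testing.} Fix a block $\ell$ and a coordinate $u$. For the joint law of one observation $(X,Y)$, the squared Hellinger distance of a flip equals $\int_{C_\ell}H^2(Q_{\theta,b_\ell(x)},Q_{\theta^{(u)},b_\ell(x)})\,dx$. Since the densities are $1-t\Delta U_\theta$ with uniform positivity, the attenuated flip has $H^2\lesssim t^2/m\le 1/m$. The joint squared Hellinger distance is therefore at most $Ch^{d_x}/m$. For $n$ observations it is at most $Cnh^{d_x}/m\le C/A_0$. We then choose $A_0$ large; this is also what makes $m\asymp nh^{d_x}$ valid once $A_0$ is fixed. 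The resulting TV distance is at most $1/2$, and Lemma~\ref{lem:decoder} gives coordinate error probability at least $1/4$. Here the decoder is applied block by block.

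\textbf{Decoding.} Given any estimator $\widehat Q$, for each block define $\widehat\theta^{(\ell)}$ as the nearest vertex, in $W_1$, to the plateau average
$$
\bar Q_\ell=|D_\ell|^{-1}\int_{D_\ell}\widehat Q_x\,dx.
$$
On $D_\ell$ the true law is the constant $Q^{(m)}_{\theta^{(\ell)}}$, so convexity of $W_1$ gives
$$
W_1(\bar Q_\ell,Q_{\theta^{(\ell)}})\le|D_\ell|^{-1}\int_{D_\ell}W_1(\widehat Q_x,Q_x)\,dx.
$$
The decoder inequality then yields
$$
\int_\X W_1(\widehat Q_x,Q_x)\,dx\ \ge\ \sum_\ell|D_\ell|\,\tfrac12\,W_1\bigl(Q_{\theta^{(\ell)}},Q_{\widehat\theta^{(\ell)}}\bigr).
$$
Taking expectations under the uniform prior, Proposition~\ref{prop:response-hypercube}(iv) with $\eta=1/4$ bounds each expected term below by $c\,a_m$. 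Summing over blocks, with $\sum_\ell|D_\ell|\asymp1$, gives a Bayes risk $\gtrsim a_m$, and hence \eqref{eq:replication-lower}.

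\textbf{Main obstacle.} The step I expect to be delicate is the coordinatewise testing: the conditional model gives the decoder the whole sample rather than an i.i.d. sample from one response law. This is handled by computing Hellinger on the joint $(X,Y)$ law, which absorbs the attenuation. The remaining issue is measurability of the plateau averages of $\widehat Q_x$, which follows from joint measurability of the kernel.
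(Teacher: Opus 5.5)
Your proposal is correct and follows essentially the same route as the paper: attenuated copies of the hypercube of Proposition~\ref{prop:response-hypercube} placed in $M_x\asymp h^{-d_x}$ covariate bumps under uniform $P_X$, a one-observation joint Hellinger bound of order $h^{d_x}/m$ with $A_0$ chosen large, blockwise decoding through Lemma~\ref{lem:decoder}, and part (iv) summed over the plateaus. The differences are minor technical variants. You decode from the plateau average of $\widehat Q_x$ using mixture convexity of $W_1$, whereas the paper decodes by nearest vertex in the integrated metric $\int_{U_a}W_1(Q_x,Q'_x)\,dx$. You also handle pairs in different blocks through bumps vanishing on shared faces, while the paper separates the bump supports by a distance of order $h$.
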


\begin{proof}
	Choose $M_x\asymp h^{-d_x}$ smooth covariate bumps $b_a$ with supports of diameter $O(h)$ separated by distances of order $h$. Let $U_a$ be plateau regions with $|U_a|\asymp h^{d_x}$. For an array $\Theta=(\theta_1,\ldots,\theta_{M_x})$, set
	$$
		q_{\Theta,x}(y)
		=1+\sum_a b_a(x)(q_{\theta_a}^{(m)}(y)-1).
	$$
	The supports are disjoint, so at most one bump is active at any covariate value. We verify that the resulting conditional law has the required H\"older modulus. For a vertex $\theta$ and $t\in[0,1]$, write
	$$
		q_{\theta,t}=1+t(q_\theta^{(m)}-1),
		\qquad Q_{\theta,t}=q_{\theta,t}\,dy.
	$$
	Part (ii) of Proposition~\ref{prop:response-hypercube} yields
	$$
		W_1(Q_{\theta,t},Q_{\theta,t'})
		\le C a_m|t-t'|.
	$$
	The scaled bumps may be chosen so that
	$$
		|b_a(x)-b_a(x')|
		\le C\min\{1,|x-x'|/h\}
		\le C h^{-\beta}|x-x'|^\beta.
	$$
	If $x,x'$ lie in the same active bump, or if one point lies immediately outside it so that the corresponding attenuation is zero, the path estimate above and $a_m\le c_0h^\beta$ imply
	$$
		W_1(Q_{\Theta,x},Q_{\Theta,x'})
		\le Ca_mh^{-\beta}|x-x'|^\beta
		\le Cc_0|x-x'|^\beta.
	$$
	If the points belong to distinct active bumps, their supports are separated by at least $ch$. Since each conditional law is within $Ca_m$ of the uniform baseline by Proposition~\ref{prop:response-hypercube}(ii),
	$$
		W_1(Q_{\Theta,x},Q_{\Theta,x'})
		\le Ca_m\le Cc_0h^\beta\le C'c_0|x-x'|^\beta.
	$$
	Choose the fixed response-family constants as in Proposition~\ref{prop:response-hypercube}, so that every attenuated density remains in the prescribed class. With these constants fixed, choose $c_0$ sufficiently small that the H\"older constants in both cases are no larger than $L_X$.

	For the testing scale, flipping one coordinate in block $a$ changes the joint law of one observation by squared Hellinger distance at most $Ch^{d_x}/m$. With the preceding constants fixed, choose $A_0$ sufficiently large. Then $m=\lfloor A_0nh^{d_x}\rfloor$ and Hellinger tensorization keep adjacent $n$-sample total variation distances uniformly below one for sufficiently large $m$. For each $a$, decode an arbitrary estimator by nearest vertex in the integrated $W_1$ metric
	\begin{align}\label{eq:replication-first-metric}
		d_a(Q,Q')=\int_{U_a}W_1(Q_x,Q'_x)\,dx.
	\end{align}
	The testing lemma gives a uniformly positive coordinate error probability. Because the response vertex is constant on each plateau $U_a$, Proposition~\ref{prop:response-hypercube}(iv) gives
	$$
		\E_\pi d_a(Q_{\theta_a}^{(m)},Q_{\widehat\theta_a}^{(m)})
		\ge c|U_a|a_m.
	$$
	Nearest vertex optimality bounds the decoded distance by twice the estimator error integrated over $U_a$. Summing over $a$ and using $M_x|U_a|\asymp1$ gives a lower bound $ca_m$ for the integrated $W_1$ risk, proving \eqref{eq:replication-lower}.
\end{proof}

\subsection{Higher moments and adaptive localization}\label{app:adaptive-localization}

The fixed grid proof is useful for $\kappa=1$, but for larger $\kappa$ the function $t\mapsto t r_\kappa(t)$ need not be concave. The following adaptive construction removes this restriction without imposing regularity on $P_X$.

\begin{proposition}[Adaptive localization]\label{prop:adaptive-localization}
	Fix $1\le p<\infty$ and a finite $1\le\kappa<\infty$. For $p=1$, take either conditional distribution class in Theorem~\ref{thm:main}. For $p>1$, take the distribution class with densities bounded away from zero and infinity in Theorem~\ref{thm:main-wp}. There exists a conditional estimator satisfying
	\begin{equation}\label{eq:adaptive-upper}
		\sup_P\mathcal R_{n,p,\kappa}(\widehat Q;P)
		\lesssim A_n(\tau)^\kappa+n^{-1}
	\end{equation}
	for every integer $n\ge1$ and every Borel covariate distribution $P_X$.
\end{proposition}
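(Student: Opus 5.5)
We follow the sketch in Section~\ref{sec:conditionalization-main}. We split the sample into two halves, $n_0=\lfloor n/2\rfloor$ and $n-n_0$; for bounded $n$ the claim is trivial because $W_p$ is bounded on the cube. First we fix thresholds $T_j$ on the dyadic levels $j\ge0$ so that $a_{d_y,\tau}(T_j)\asymp2^{-j\beta}$. Explicitly, $T_j\asymp2^{2j\beta}$ for $d_y=1$, $T_j\asymp j\,2^{2j\beta}$ for $d_y=2$, and $T_j\asymp2^{j\beta(2\tau+d_y)/(\tau+1)}$ for $d_y>2$. These satisfy $2^{jd_x}T_j\asymp2^{j\beta\kappa_c(\tau)}(1+j)^{\1\{d_y=2\}}$, grow geometrically, and satisfy $T_j\ge c\log(2+j)\cdot j$ once $j$ is large.

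Using the first subsample, we build the adaptive partition. We accept the root and descend: a level-$j$ dyadic cell is accepted when its first-half count is at least $T_j$. The terminal cells are the first rejected cells whose parents are all accepted, together with a cap at the level where $T_j>n$. Each terminal cell $L$ is estimated using the second-half responses whose covariates fall in its accepted parent $B(L)$. We apply the unconditional estimator from Proposition~\ref{prop:response-upper}, or from \AppendixRef{supp:w2-analytic} when $p>1$, with a fixed fallback if fewer than two points fall in $B(L)$. Measurability holds because the partition is a finite-valued function of the first half.

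Next we bound the bad cells. Call a cell misclassified if it is accepted while $n_0P_X(B)<T_j/2$, or rejected while $n_0P_X(B)>2T_j$. Chernoff bounds give $P_X(B)\Pr(\text{misclassified})\lesssim n^{-1}T_je^{-cT_j}$. This holds because either $P_X(B)\lesssim T_j/n$ together with an exponential tail, or $P_X(B)=\lambda/n_0$ with $\lambda e^{-c\lambda}\lesssim T_je^{-cT_j}$. Summing over the $2^{jd_x}$ cells at each level gives $n^{-1}\sum_j2^{jd_x}T_je^{-cT_j}\lesssim n^{-1}$, since $T_j$ grows fast enough to beat $2^{jd_x}$. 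A terminal cell is bad when it or its parent is misclassified. The children of a parent are disjoint subsets of it, so the expected total bad mass is $O(n^{-1})$. Bounded loss then gives a contribution of $O(n^{-1})$.

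For a good terminal cell $L$, the parent is correctly accepted, so $n P_X(B(L))\gtrsim T_{j(L)-1}\asymp T_{j(L)}$. Conditional on the first half, the count in $B(L)$ is binomial. Lemma~\ref{lem:binomial} together with the moment bounds \eqref{eq:response-w1-moments} or \eqref{eq:response-minimax-wp} then gives a stochastic error of $\lesssim a(T_{j(L)})^\kappa\lesssim h_L^{\beta\kappa}$. The bias $W_p(Q_{B(L)},Q_x)\lesssim\operatorname{diam}(B(L))^\beta$ follows from joint convexity of $W_p^p$, and mixture closure keeps $Q_{B(L)}$ in the class. Since $L$ is correctly rejected, $P_X(L)\le2T_{j(L)}/n_0$; capped cells need separate handling. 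We then sum exactly as in Section~\ref{sec:conditionalization-main}, splitting at $j_*$ with $2^{j_*d_x}T_{j_*}\asymp n$. Cells finer than $h_*$ contribute at most $h_*^{\beta\kappa}\asymp A_n^\kappa$. Coarser cells contribute at most $n^{-1}\max_{j\le j_*}2^{j(d_x-\beta\kappa)}T_j$. This maximum is attained at $j_*$ when $\kappa\le\kappa_c$, giving $A_n^\kappa$, and at $j=0$ when $\kappa>\kappa_c$, giving $n^{-1}$.

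The main obstacle is the bookkeeping at the boundary. Three points need care. First, a good terminal cell is produced by rejection, and this must be correctly reconciled with its parent's acceptance to get both the mass bound and the local sample size. Second, the cap cells at the maximal level must be treated: their bias is $\le h^{\beta\kappa}$ at a level where $2^{-j\beta\kappa}\lesssim n^{-\kappa/\kappa_c}\le A_n^\kappa$. Third, when $d_y=2$ and $\kappa=\kappa_c$, the logarithmic factor in $T_j$ must combine with the definition of $A_n$; I expect $\log(en)/n$ matches $A_n^{\kappa_c}$ exactly. I would first check the $d_y=2$ threshold algebra, since it is where constants could fail.
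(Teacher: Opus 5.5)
Your proposal is correct and follows essentially the same route as the paper. Both use sample splitting, occupancy thresholds $T_j$ with $a(T_j)\lesssim2^{-j\beta}$, the terminal partition of first rejected cells estimated from their accepted parents, Chernoff control of the misclassified mass at order $n^{-1}$, and the split at $j_*$ using disjointness of the good terminal cells. The separate ``cap'' is unnecessary: at levels with $T_j>n\ge n_0$ every cell is automatically and correctly rejected, so these are ordinary first rejected cells, already covered by the finer-than-$h_*$ branch.
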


\begin{proof}
	We first bound the expected mass of terminal cells affected by incorrect acceptance or rejection. We then control the risk on the remaining cells using the local moment bounds and disjointness of the terminal partition.

	It suffices to prove the bound for sufficiently large $n$. For the finitely many smaller positive sample sizes, use a fixed probability law for every query covariate; boundedness of $W_p^\kappa$ and positivity of $A_n(\tau)^\kappa+n^{-1}$ give the same upper bound after increasing the constant. Split the data into two subsamples of sizes $n_0$ and $n_1$ with $n_0\asymp n_1\asymp n$. Construct a dyadic partition of $\X$ from the first subsample. A level-$j$ cube has side length $h_j=2^{-j}$. Define
	$$
		T_j=\left\lceil C_0 2^{jb}(1+j)^\eta\right\rceil,
	$$
	where
	$$
		(b,\eta)=
		\begin{cases}
			(2\beta,0),&d_y=1,\\
			(2\beta,1),&d_y=2,\\
			\bigl(\beta(2\tau+d_y)/(\tau+1),0\bigr),&d_y>2.
		\end{cases}
	$$
	By construction, $a(T_j)\lesssim h_j^\beta$. In the $p>1$ case, use Proposition~\ref{prop:response-wp} for the local distribution estimator; cellwise mixtures preserve the density and Besov constraints, and joint convexity of $W_p^p$ gives the same $O(h_j^\beta)$ pooling bias. Declare a cell accepted when its count in the first subsample is at least $T_j$. For large $n$, the root is accepted, whereas $T_j>n$ at sufficiently fine levels; hence every dyadic chain has a deepest accepted cell. Use that cell for each query point. Acceptance is inherited by ancestors because counts decrease and thresholds increase down a chain. Fit the distribution estimator using only the second subsample in the selected cell, making the selected cell independent of the estimation sample.

	Let $\mathcal L$ be the disjoint partition formed by the first rejected cells. If $x\in L\in\mathcal L$, its pooling cell is the accepted parent $A(L)$; parent and child side lengths and thresholds differ only by constants, which we suppress. If a level-$j$ cell satisfies $n_0P_X(B)<T_j/2$, Chernoff's inequality bounds its acceptance probability by $e^{-cT_j}$. Multiplying by its mass, summing over the $O(2^{jd_x})$ cells at level $j$, and then over $j$ gives
	$$
		\frac Cn\sum_{j\ge0}2^{jd_x}T_je^{-cT_j}\lesssim n^{-1}.
	$$
	Likewise, when $n_0P_X(B)>2T_j$, the rejection probability is at most $e^{-cn_0P_X(B)}$. The bound $te^{-ct}\lesssim T_je^{-c'T_j}$ for $t>2T_j$ gives another total contribution of order $n^{-1}$. Let $\mathfrak B$ denote the sum of $P_X(B)$ over dyadic cells $B$ that are accepted in the first case or rejected in the second. The preceding bounds give $\E\mathfrak B\lesssim n^{-1}$. Define
	$$
		\mathcal L_{\mathrm{good}}
		=\{L\in\mathcal L:L\text{ and }A(L)\text{ are not misclassified}\}.
	$$
	For each accepted parent $B$, the terminal cells with $A(L)=B$ are disjoint subsets of $B$. Consequently, for every realization of the first subsample,
 $$
 \begin{aligned}
 \sum_{L\in\mathcal L\setminus\mathcal L_{\mathrm{good}}}P_X(L)
 &\le \sum_{\substack{L\in\mathcal L\\L\text{ misclassified}}}P_X(L)
 +\sum_{\substack{B\text{ accepted}\\B\text{ misclassified}}}
   \sum_{\substack{L\in\mathcal L\\A(L)=B}}P_X(L)\\
 &\le\mathfrak B.
 \end{aligned}
 $$
 Since $W_p^\kappa\le d_y^{\kappa/2}$ on the response cube, these terminal cells contribute at most $C\E\mathfrak B$ to the integrated risk. For $L\in\mathcal L_{\mathrm{good}}$, a terminal cell and its accepted parent satisfy
	\begin{equation}\label{eq:terminal-mass}
		P_X(L)\lesssim T_{j(L)}/n,
		\qquad
		n_0P_X(A(L))\gtrsim T_{j(L)}.
	\end{equation}
	The second inequality also implies $n_1P_X(A(L))\gtrsim T_{j(L)}$. Conditional on the first subsample, the count in the second subsample in $A(L)$ is binomial with this mean. Lemma~\ref{lem:binomial}, together with $a(T_{j(L)})\lesssim h_L^\beta$, bounds the distribution estimation term by $Ch_L^{\beta\kappa}$, including the fallback event. The conditional H\"older condition gives the same order for the pooling bias. Hence
	\begin{equation}\label{eq:terminal-risk}
		\mathcal R_{n,p,\kappa}(\widehat Q;P)
		\lesssim
		\E\sum_{L\in\mathcal L_{\mathrm{good}}}P_X(L)h_L^{\beta\kappa}
		+\E\mathfrak B.
	\end{equation}

	Choose $j_*$ so that
	\begin{equation}\label{eq:jstar-adaptive}
		2^{j_*(d_x+b)}(1+j_*)^\eta\asymp n,
		\qquad h_*=2^{-j_*}.
	\end{equation}
	At this scale $h_*^\beta\asymp A_n(\tau)$. Cells in $\mathcal L_{\mathrm{good}}$ finer than $h_*$ have total probability under $P_X$ at most one and hence contribute at most $h_*^{\beta\kappa}$. For the coarser cells in $\mathcal L_{\mathrm{good}}$, \eqref{eq:terminal-mass} gives
	$$
		\sum_{\substack{L\in\mathcal L_{\mathrm{good}}\\ j(L)\le j_*}}
		P_X(L)h_L^{\beta\kappa}
		\lesssim
		\frac1n\sum_{\substack{L\in\mathcal L_{\mathrm{good}}\\ j(L)\le j_*}}
		|L|h_L^{\beta\kappa-b-d_x}(1+j(L))^\eta.
	$$
	Here $|L|=h_L^{d_x}$ is the Euclidean volume of $L$. Disjointness gives $\sum_{L\in\mathcal L_{\mathrm{good}}}|L|\le1$, so the preceding display is at most
	\[
	 \frac Cn\max_{0\le j\le j_*}
	 2^{j(d_x+b-\beta\kappa)}(1+j)^\eta
	 \lesssim n^{-1}+h_*^{\beta\kappa}.
	\]
	If $d_x+b-\beta\kappa\ge0$, the maximum occurs at $j_*$ and \eqref{eq:jstar-adaptive} gives the second term. If $d_x+b-\beta\kappa<0$, the exponentially decreasing factor makes the maximum bounded independently of $n$, giving the first term. This includes the critical case without summing over resolutions. Together with $\E\mathfrak B\lesssim n^{-1}$ and $h_*^\beta\asymp A_n(\tau)$, substitution into \eqref{eq:terminal-risk} proves \eqref{eq:adaptive-upper}.
\end{proof}

\begin{proof}[Completion of the proofs of Theorems~\ref{thm:main} and~\ref{thm:main-wp}]
The two lower bounds are complementary. The uniform covariate distribution and the replicated family give
\begin{equation}\label{eq:higher-moment-uniform-lower}
	\mathcal R^\star_{n,p,\kappa}\gtrsim A_n(\tau)^\kappa,
\end{equation}
using Jensen's inequality together with the first-moment decoding bound. For $p>1$, the hard family lies inside the prescribed density bounds, Proposition~\ref{prop:response-hypercube}(ii) gives the required $W_p$ attenuation estimate, and $W_p\ge W_1$ preserves the separation.

For the second lower bound, take $n\ge2$, choose two covariate points $x_0,x_1$ a fixed distance apart and let
$$
	P_X=(1-n^{-1})\delta_{x_0}+n^{-1}\delta_{x_1}.
$$
Put the uniform distribution at $x_0$ and choose two fixed smooth perturbations of it, $Q_+,Q_-$, at $x_1$, with $\Delta=W_p(Q_+,Q_-)>0$ small enough to satisfy the conditional H\"older condition, any imposed Besov bound, and, when $p>1$, the stated density bounds. With probability $(1-n^{-1})^n\ge1/4$, no observation has covariate $x_1$, in which case the two data-generating laws coincide. For any estimated distribution $R$, the triangle inequality gives
$$
	\frac{W_p^\kappa(R,Q_+)+W_p^\kappa(R,Q_-)}2
	\ge (\Delta/2)^\kappa.
$$
Hence
\begin{equation}\label{eq:rare-design-lower}
	\mathcal R^\star_{n,p,\kappa}\gtrsim n^{-1}.
\end{equation}
Together, \eqref{eq:adaptive-upper} and \eqref{eq:rare-design-lower}, with the lower bound under the uniform covariate distribution above, establish Theorems~\ref{thm:main} and~\ref{thm:main-wp} for every fixed finite $\kappa$.\ 
\end{proof}

\section{Bounds for higher Wasserstein orders}\label{supp:w2-analytic}
We prove the $W_p$ bound used in Theorem~\ref{thm:main-wp}.

\subsection{\texorpdfstring{$W_p$}{Wp} distribution estimation}\label{app:wp-response}
We use the negative norm \eqref{eq:negative-def-wp} and the transport comparison in Proposition~\ref{prop:wp-negative}. Orthonormality across resolutions controls the empirical wavelet divergence field and gives the sharp logarithmic factor in dimension two.

\begin{proposition}[$W_p$ unconditional rates]\label{prop:response-wp}
	Consider $\Qcal^{(p)}_{d,\tau,r,v}(\underline q,\overline q)$ as defined in Section~\ref{subsec:regularity}, with fixed model parameters. Fix $1\le\kappa<\infty$. Then, for all sufficiently large $m$,
	\begin{equation}\label{eq:response-wp-supp}
		\inf_{\widehat Q}
		\sup_{Q\in\Qcal^{(p)}_{d,\tau,r,v}(\underline q,\overline q)}
		\bigl\{\E W_p^\kappa(\widehat Q,Q)\bigr\}^{1/\kappa}
		\asymp a_{d,\tau}(m).
	\end{equation}
\end{proposition}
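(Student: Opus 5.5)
The lower bound is immediate from earlier results. The hypercube of Proposition~\ref{prop:response-hypercube} has densities bounded between fixed positive constants arbitrarily close to one, so after shrinking the cutoff constant they lie between $\underline q$ and $\overline q$. When $\tau>0$ they also lie in the Besov ball. Since $W_p\ge W_1$, part (iv) together with Lemma~\ref{lem:decoder} and the one-coordinate Hellinger bound (iii) gives $\E_\pi W_p(Q_\theta,\widehat Q)\gtrsim a_{d,\tau}(m)$, by decoding in $W_1$ exactly as in the proof of Theorem~\ref{thm:response-benchmark}. Jensen's inequality then yields the $\kappa$th-moment lower bound.

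For the upper bound I will use the one-sided comparison of Proposition~\ref{prop:wp-negative}. The plan is to construct a preliminary estimator $\widetilde q$ and apply the approximate projection \eqref{eq:approx-projection} in the metric $\|\cdot\|_{-1,p}$ onto $L^p$ probability densities. This gives $\|\widehat q-q\|_{-1,p}\le 2\|\widetilde q-q\|_{-1,p}+m^{-1}$. Since the true $q\ge\underline q$, \eqref{eq:wp-upper-one-sided} applied with $f=\widehat q$ gives
\[
W_p(\widehat Q,Q)\lesssim\|\widetilde q-q\|_{-1,p}+m^{-1}.
\]
It therefore suffices to bound $\E\|\widetilde q_J-q\|_{-1,p}^\kappa$ for the truncated empirical wavelet projection $\widetilde q_J$, with $2^{J(2\tau+d)}\asymp m$ when $d>2$. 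When $d\le 2$, I will take $J$ with $2^{Jd}\asymp m$ (up to a logarithm for $d=2$), so that the bias is negligible.

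The error splits into a bias term $q-q_J$ and a stochastic term $\widetilde q_J-q_J$. For the bias, I will use the standard negative-order wavelet estimate $\|\sum_{j>J}\beta_j\psi_j\|_{-1,p}\lesssim\bigl\|(\sum_{j>J}2^{-2j}|\beta_{j,k}|^2|\psi_{j,k}|^2)^{1/2}\bigr\|_p$. The Besov bound with $r\ge p$, which is assumed when $d>2$, then gives order $2^{-J(\tau+1)}$. When $\tau=0$ and $d>2$, the density bounds alone give $q\in L^\infty$, so $\|q-q_J\|_{-1,p}\lesssim 2^{-J}$. With $2^{Jd}\asymp m$ this matches $m^{-1/d}=a_{d,0}(m)$.

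For the stochastic term, the same square-function characterization reduces the problem to bounding
\[
\E\Bigl\|\Bigl(\sum_{j\le J}\sum_k 2^{-2j}|\widehat\beta_{j,k}-\beta_{j,k}|^2|\psi_{j,k}|^2\Bigr)^{1/2}\Bigr\|_p^\kappa .
\]
Pointwise, at each resolution, finite overlap and the variance bound $\operatorname{Var}\widehat\beta_{j,k}\lesssim\overline q/m$, which is where the upper density bound enters, give the expected square function $\lesssim m^{-1}\sum_{j\le J}2^{j(d-2)}=m^{-1}B_J$. This reproduces $a_{d,\tau}(m)$, including $\sqrt{\log m/m}$ for $d=2$ from orthogonality across scales.

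The main obstacle is passing from the second-moment pointwise bound to the $L^p$ norm and the $\kappa$th moment. I would use Rosenthal's (or Bernstein's) inequality for each coefficient: $\E|\widehat\beta_{j,k}-\beta_{j,k}|^s\lesssim (\overline q/m)^{s/2}+m^{1-s}2^{jd(s/2-1)}$. The second term is dominated by the first as long as $2^{Jd}\lesssim m$, which holds at the chosen $J$. Combined with Minkowski's inequality in $L^{s/2}$ for the square function, with $s=\max(p,\kappa)$, this controls $\E\|\cdot\|_p^s$. Moments of order $\kappa$ then follow from H\"older's inequality, completing the upper bound for all sufficiently large $m$.
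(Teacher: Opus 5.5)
Your proposal is correct and follows the paper's overall architecture: a truncated empirical wavelet projection, an approximate projection in $\|\cdot\|_{-1,p}$, Proposition~\ref{prop:wp-negative}, and Rosenthal plus Minkowski for the upper bound, with the hypercube of Proposition~\ref{prop:response-hypercube} and $W_p\ge W_1$ for the lower bound. You differ from the paper in the key tool used to control the negative norm.

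\textbf{Your route.} You invoke a Littlewood--Paley square-function bound
$\|\sum c_{j,k}\psi_{j,k}\|_{-1,p}\lesssim\|(\sum 2^{-2j}|c_{j,k}|^2|\psi_{j,k}|^2)^{1/2}\|_p$
for the boundary basis. That theorem then supplies the cross-scale almost orthogonality, and with it the $\sqrt{\log m/m}$ rate for $d=2$. The stochastic term becomes a pointwise sum of squares, which you treat coefficient by coefficient.

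\textbf{The paper's route.} It uses the explicit fields $V_\psi$ with $\nabla\!\cdot V_\psi=\psi$, $V_\psi\cdot n=0$ and $\|V_\psi\|_\infty\lesssim2^{j(d/2-1)}$, and only the easy direction of Lemma~\ref{lem:flux-negative-wp}. The bias then follows from the triangle inequality over levels. The stochastic error is a single average of centered i.i.d.\ vectors $G_J(z)$, built from $K_J(z,y)=\sum_\psi V_\psi(z)\psi(y)$. Its variance is controlled by orthonormality of the wavelets in the sample variable, $\int|K_J(z,y)|^2\,dy=\sum_\psi|V_\psi(z)|^2\lesssim B_J$. So the cross-scale orthogonality comes for free, with no harmonic analysis on the cube.

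\textbf{Comparison.} The paper's route is elementary and self-contained. Yours is the more familiar Littlewood--Paley picture, but it rests on a square-function theorem for the Cohen--Daubechies--Vial boundary basis with boundary-condition-free test functions in $W^{1,p'}(\Omega_d)$. The paper only uses the positive-order version for interior wavelets after zero extension (Lemma~\ref{lem:positive-interior-wavelet}). You would therefore need to cite or prove the boundary version. By duality it reduces to
$\|(\sum 2^{2j}|\langle\phi,\psi_{j,k}\rangle|^2|\psi_{j,k}|^2)^{1/2}\|_{p'}\lesssim\|\nabla\phi\|_{p'}$.
This can be obtained, for example, from $\langle\phi,\psi\rangle=-\langle\nabla\phi,V_\psi\rangle$ and a vector-valued Calder\'on--Zygmund argument, but it is not off the shelf.

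\textbf{Two small repairs.}
\begin{enumerate}
\item Take $s=\max\{2,p,\kappa\}$ rather than $\max(p,\kappa)$. Minkowski's inequality in $L^{s/2}(\mathbb P)$ requires $s\ge2$, while $p$ and $\kappa$ may both lie below $2$.
\item Treat the finite scaling part of $\widetilde q_J-q_J$ separately. It has mean zero, so Poincar\'e's inequality bounds its negative norm by $O(m^{-1/2})$ in every fixed moment.
\end{enumerate}
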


\begin{proof}
	Let $Y_1,\ldots,Y_m$ be i.i.d. with density $q$, let $q_J$ be the population wavelet projection through level $J$, and let $\widetilde q_J$ be the corresponding estimator with empirical wavelet coefficients. The scaling space contains constants, so $\int\widetilde q_J=1$.

	Let $\widehat q_J$ be the measurable approximate projection of $\widetilde q_J$ onto the set of $L^p$ probability densities, using the rule of \AppendixRef{supp:guide} in the norm $\|\cdot\|_{-1,p}$ with tolerance $m^{-1}$, and write $\widehat Q_J=\widehat q_J\,dy$. Since the true density is admissible, \eqref{eq:approx-projection} and Proposition~\ref{prop:wp-negative} give
	$$
		\bigl\{\E W_p^\kappa(\widehat Q_J,Q)\bigr\}^{1/\kappa}
		\lesssim
		\|q-q_J\|_{-1,p}
		+
		\bigl\{\E\|\widetilde q_J-q_J\|_{-1,p}^\kappa\bigr\}^{1/\kappa}
		+m^{-1}.
	$$

	For each tensor-product wavelet $\psi\in\Psi_j$, choose a coordinate $i$ containing a detail factor and set
	$$
		V_\psi(z)=e_i\int_0^{z_i}
		\psi(z_1,\ldots,z_{i-1},s,z_{i+1},\ldots,z_d)\,ds.
	$$
	The detail factor has zero integral, so its primitive vanishes beyond its support. Thus $\operatorname{div}V_\psi=\psi$ and $V_\psi\cdot n=0$ on the boundary of the cube. These fields have support diameter $O(2^{-j})$, bounded overlap at each level, and $\|V_\psi\|_\infty\lesssim2^{j(d/2-1)}$. This is the wavelet field construction of \cite[Section~3.1, Lemmas~3 and~5]{NilesWeedBerthet2022}. Finite overlap gives
	$$
		\left\|\sum_{\psi\in\Psi_j}c_\psi V_\psi\right\|_{L^p}
		\lesssim 2^{-j}2^{jd(1/2-1/p)}\|c\|_{\ell^p}.
	$$

	The density upper bound implies $|\beta_{q,\psi}|\le\overline q\|\psi\|_1\lesssim2^{-jd/2}$, hence
	$\|\beta_{q,j}\|_{\ell^p}\lesssim2^{jd(1/p-1/2)}$. When $d>2$ and $\tau>0$, the Besov coefficient bound \eqref{eq:besov-wavelet-supp} and the class restriction $r\ge p$ instead give
	$\|\beta_{q,j}\|_{\ell^p}\lesssim2^{-j\tau}2^{jd(1/p-1/2)}$.
	The fields $\sum_{j>J}\sum_{\psi\in\Psi_j}\beta_{q,\psi}V_\psi$ therefore converge absolutely in $L^p$. Since the boundary wavelet projections converge to $q$ in $L^p$, integration by parts for the finite sums passes to every test function in $W^{1,p'}(\Omega_d)$. The limiting field has divergence $q-q_J$ and zero normal flux in this weak sense. Consequently,
	$$
		\|q-q_J\|_{-1,p}
		\lesssim\sum_{j>J}2^{-j}2^{jd(1/2-1/p)}\|\beta_{q,j}\|_{\ell^p},
	$$
	which yields
	\begin{equation}\label{eq:wp-bias}
		\|q-q_J\|_{-1,p}
		\lesssim
		\begin{cases}
			2^{-J},&d\le2\ \text{or}\ \tau=0,\\
			2^{-J(\tau+1)},&d>2,\ \tau>0.
		\end{cases}
	\end{equation}

	For the empirical detail error, define
	$$
		K_J(z,y)=\sum_{j=j_0}^J\sum_{\psi\in\Psi_j}V_\psi(z)\psi(y),
		\qquad
		G_J(z)=\frac1m\sum_{a=1}^m
		\{K_J(z,Y_a)-\E K_J(z,Y_a)\}.
	$$
	The divergence of $G_J$ is the detail part of $\widetilde q_J-q_J$, and $G_J$ has zero normal component. Write $B_J=\sum_{j=j_0}^J2^{j(d-2)}$. Orthonormality in $y$ and finite overlap give, for each $z$,
	$$
		\int_{\Omega_d}|K_J(z,y)|^2\,dy
		=\sum_{j=j_0}^J\sum_{\psi\in\Psi_j}|V_\psi(z)|^2
		\lesssim B_J.
	$$
	Also, $\sum_{j=j_0}^J\sum_{\psi\in\Psi_j}|\psi(y)|^2\lesssim2^{Jd}$, so Cauchy--Schwarz gives $|K_J(z,y)|^2\lesssim B_J2^{Jd}$. Set $t=\max\{2,p,\kappa\}$ and $Z_J(z,Y)=K_J(z,Y)-\E K_J(z,Y)$. Under $q\le\overline q$, these bounds imply
	$$
		\E|Z_J(z,Y)|^2\lesssim B_J,
		\qquad
		\E|Z_J(z,Y)|^t\lesssim_t B_J^{t/2}2^{Jd(t/2-1)}.
	$$
	Rosenthal's inequality \cite{Rosenthal1970}, applied componentwise to the independent observations, therefore gives
	$$
		\|G_J(z)\|_{L^t(\mathbb P)}
		\lesssim_{p,\kappa}
		\left(\frac{B_J}{m}\right)^{1/2}
		\left\{1+\left(\frac{2^{Jd}}m\right)^{1/2-1/t}\right\}
		\lesssim_{p,\kappa}\left(\frac{B_J}{m}\right)^{1/2},
		\qquad 2^{Jd}\le m.
	$$
	For $t=2$, the same bound follows from the variance identity. Since $t\ge\max\{p,\kappa\}$, Minkowski's inequality gives
	$$
		\bigl\|\|G_J\|_{L^p(dz)}\bigr\|_{L^\kappa(\mathbb P)}
		\le\bigl\|\|G_J\|_{L^t(\mathbb P)}\bigr\|_{L^p(dz)}
		\lesssim_{p,\kappa}\left(\frac{B_J}{m}\right)^{1/2}.
	$$

	The scaling part of $\widetilde q_J-q_J$ has mean zero because both projections have integral one and every detail wavelet has zero integral. Poincar\'e's inequality bounds its negative norm by its $L^p$ norm. The scaling family is fixed and finite, and its bounded functions have empirical coefficient errors of order $m^{-1/2}$ in every fixed finite moment. Combining this bound with the divergence field $G_J$ yields
	\begin{equation}\label{eq:wp-variance}
		\bigl\{\E\|\widetilde q_J-q_J\|_{-1,p}^\kappa\bigr\}^{1/\kappa}
		\lesssim_{p,\kappa}
		\left[\frac1m\left(1+\sum_{j=j_0}^J2^{j(d-2)}\right)\right]^{1/2}.
	\end{equation}
	For $d\le2$, take $J=\lfloor(\log_2m)/2\rfloor$. Substituting \eqref{eq:wp-bias} and \eqref{eq:wp-variance} into the preceding error decomposition gives $m^{-1/2}$ when $d=1$ and $\{\log(em)/m\}^{1/2}$ when $d=2$. For $d>2$, take $J=\lfloor(\log_2m)/(2\tau+d)\rfloor$, which balances the bias $2^{-J(\tau+1)}$ and stochastic error $m^{-1/2}2^{J(d-2)/2}$, including when $\tau=0$. Both choices satisfy $2^{Jd}\le m$ and yield the upper bound $C_{p,\kappa}a_{d,\tau}(m)$.

	For the finitely many positive sample sizes for which $J<j_0$, use a fixed probability law on the cube. Boundedness of $W_p$ then extends the moment upper bound to every integer $m\ge1$ after increasing the constant.

	For the lower bound, use the hypercube in Proposition~\ref{prop:response-hypercube}. Part (i) places the family in the prescribed density bounds and Besov ball after decreasing the fixed perturbation amplitude if necessary. Since $W_p\ge W_1$, the separation in part (iv) gives a first-moment Bayes lower bound of order $a_{d,\tau}(m)$. Jensen's inequality raises this to order $a_{d,\tau}(m)^\kappa$ for the $W_p^\kappa$ risk. This proves \eqref{eq:response-wp-supp} and hence the unconditional statement \eqref{eq:response-minimax-wp}.
\end{proof}

\section{Conditional Brenier map estimation}\label{app:brenier}
We establish the stability and measurability needed for the upper bound in Theorem~\ref{thm:brenier}, then prove the lower bound using inverse potentials.

\subsection{Stability and measurability}
\begin{lemma}[Uniform Brenier stability]\label{lem:brenier-stability}
	Fix $P\in\Pcal^{\mathrm{Br}}_{\beta,\tau,r,v}(\overline q,\Lambda)$ and write
	$$
		\lambda_0=(\overline q\Lambda^{d_y-1})^{-1}.
	$$
	Then, uniformly in $x\in S_X$,
	\begin{equation}\label{eq:brenier-density-floor}
		q_x(y)\ge\Lambda^{-d_y}
		\quad\text{for a.e. }y\in\Y,
	\end{equation}
	and for every probability law $\widetilde Q$ on $\Y$,
	\begin{equation}\label{eq:map-stability}
		\|T_{\rho\to\widetilde Q}-T_x\|_{L^2(\rho)}^2
		\le\frac{\Lambda}{\lambda_0}W_2^2(\widetilde Q,Q_x).
	\end{equation}
	In addition, every measurable map $S:\Y\to\Y$ satisfies
	\begin{equation}\label{eq:common-source}
		W_2^2(S_\#\rho,Q_x)\le\|S-T_x\|_{L^2(\rho)}^2.
	\end{equation}
\end{lemma}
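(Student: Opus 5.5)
The plan is to derive all three claims from the Monge--Ampère equation for $\varphi_x$ together with the two-sided Hessian bounds on $\nabla\varphi_x$. Fix $x\in S_X$ and write $T_x=\nabla\varphi_x$. Since $\varphi_x\in C^{1,1}$ with $\operatorname{Lip}(\nabla\varphi_x)\le\Lambda$, Alexandrov/Rademacher give $D^2\varphi_x\preceq\Lambda I$ a.e. The map $T_x$ pushes $\rho$ forward to $q_x\,dy$. Because $\nabla\varphi_x$ is Lipschitz, the area formula applies, and the Monge--Ampère equation holds in the a.e. sense:
\[
\det D^2\varphi_x(z)=q_x(\nabla\varphi_x(z))^{-1}
\]
for $\rho$-a.e. $z$. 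Here we use that the image of the singular set $\{\det D^2\varphi_x=0\}$ has zero Lebesgue measure, while $Q_x$ charges it with full measure of its preimage. That would contradict absolute continuity unless the set is $\rho$-null.

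From $q_x\le\overline q$ we get $\det D^2\varphi_x\ge\overline q^{-1}$. All eigenvalues lie in $[0,\Lambda]$, so the smallest one satisfies
\[
\lambda_{\min}\ge\det/\Lambda^{d_y-1}\ge\lambda_0 .
\]
This gives $\lambda_0 I\preceq D^2\varphi_x\preceq\Lambda I$ a.e. Conversely, $\det D^2\varphi_x\le\Lambda^{d_y}$, so $q_x\circ\nabla\varphi_x\ge\Lambda^{-d_y}$ $\rho$-a.e. Since $\nabla\varphi_x$ pushes $\rho$ to $Q_x$, whose support is all of $\Y$ up to null sets (because $q_x>0$ a.e.), this transfers to $q_x\ge\Lambda^{-d_y}$ a.e. on $\Y$, which proves \eqref{eq:brenier-density-floor}.

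The main step is \eqref{eq:map-stability}. I would invoke the fixed-source stability bound of \cite[Corollary~1]{BalakrishnanManole2026}. It states that if the true potential $\varphi$ is $\lambda$-strongly convex and $\Lambda$-smooth, then for any target $\widetilde Q$ the Brenier map $\widetilde T$ from $\rho$ satisfies
\[
\|\widetilde T-\nabla\varphi\|_{L^2(\rho)}^2\le(\Lambda/\lambda)\,W_2^2(\widetilde Q,Q) .
\]
The only subtlety is that strong convexity must hold globally on a convex domain, so that the conjugate $\varphi_x^*$ is $\lambda_0^{-1}$-smooth. The domain $\Y$ is convex, and the a.e. Hessian bound of a $C^{1,1}$ function integrates along segments to give genuine $\lambda_0$-strong convexity. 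If the citation's hypotheses need unwinding, I would reproduce the standard argument. First, $\lambda_0$-strong convexity of $\varphi_x$ gives $\varphi_x^*$ with $(1/\lambda_0)$-Lipschitz gradient on the relevant set. Second, the semi-dual gap inequality gives
\[
\tfrac{\lambda_0}{2}\|\widetilde T-T_x\|^2\le\int\varphi_x\,d\rho+\int\varphi_x^*\,d\widetilde Q-\int\langle z,\widetilde T\rangle\,d\rho .
\]
Third, the right side is bounded by $\tfrac{\Lambda}{2}W_2^2$ using smoothness of $\varphi_x$ on the coupling. I expect this step to be the main obstacle: one must verify that the constants come out exactly as $\Lambda/\lambda_0$, and handle $\varphi_x^*$ being defined on $\mathbb R^{d_y}$ while $\widetilde Q$ is arbitrary on $\Y$.

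Finally, \eqref{eq:common-source} is immediate. The coupling $(S,T_x)_\#\rho$ belongs to $\Pi(S_\#\rho,Q_x)$, and its cost is $\|S-T_x\|_{L^2(\rho)}^2$. Uniformity in $x$ holds because every constant depends only on $\overline q,\Lambda,d_y$.
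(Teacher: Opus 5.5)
Your proposal follows the paper's proof essentially step for step: the a.e.\ identity $q_x(T_x(z))\det DT_x(z)=1$, the eigenvalue bound giving $\lambda_0 I\preceq D^2\varphi_x\preceq\Lambda I$, the density floor from $\det DT_x\le\Lambda^{d_y}$, \cite[Corollary~1]{BalakrishnanManole2026} for \eqref{eq:map-stability}, and the common-source coupling for \eqref{eq:common-source}. For the density floor, your transfer via $Q_x(\{q_x<\Lambda^{-d_y}\})=0$ and $q_x>0$ a.e.\ is a slightly more direct substitute for the paper's surjectivity and bi-Lipschitz argument.

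One correction to your fallback sketch: the constants enter the other way around. The $\Lambda$-smoothness of $\varphi_x$ (equivalently, $\Lambda^{-1}$-strong convexity of $\varphi_x^*$) gives the lower bound $\frac{1}{2\Lambda}\|\widetilde T-T_x\|_{L^2(\rho)}^2$ on the semi-dual gap. The $\lambda_0$-strong convexity of $\varphi_x$ (equivalently, $\lambda_0^{-1}$-smoothness of $\varphi_x^*$), applied along the coupling obtained by gluing $(\Id,T_x)_\#\rho$ with an optimal coupling of $Q_x$ and $\widetilde Q$, gives the upper bound $\frac{1}{2\lambda_0}W_2^2(\widetilde Q,Q_x)$. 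Your intermediate bounds with $\frac{\lambda_0}{2}$ and $\frac{\Lambda}{2}$ are false in general. For $\varphi(z)=\frac a2|z|^2$ the gap equals $\frac{1}{2a}\|\widetilde T-\nabla\varphi\|_{L^2(\rho)}^2=\frac{1}{2a}W_2^2(\widetilde Q,Q)$, which refutes your lower bound for $a>1$ and your upper bound for $a<1$.
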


\begin{proof}
	By the almost everywhere change of variables identity for the Brenier map \cite{Santambrogio2015}, $$
        q_x(T_x(z))\det DT_x(z)=1.
    $$
    Since $0\preceq DT_x\preceq\Lambda I$ and $q_x\le\overline q$, we have $\det DT_x\ge\overline q^{-1}$ almost everywhere, so the smallest eigenvalue of $DT_x$ is at least $\lambda_0$. Thus the Brenier potential is uniformly $\lambda_0$-strongly convex and $\Lambda$-smooth. The map $T_x$ is continuous, and its compact image has full $Q_x$-mass; because $q_x>0$ almost everywhere, that image must be the whole cube. Since the preceding bounds make $T_x$ bi-Lipschitz, $\det DT_x\le\Lambda^{d_y}$ in the same identity gives \eqref{eq:brenier-density-floor}.

	Equation~\eqref{eq:map-stability} now follows from \cite[Corollary~1]{BalakrishnanManole2026}, with fixed source $\rho$, strong convexity constant $\lambda_0$, and smoothness constant $\Lambda$. The estimated target $\widetilde Q$ may be any probability law on the cube. Finally, $(S,T_x)_\#\rho$ couples $S_\#\rho$ and $Q_x$, which proves \eqref{eq:common-source}.
\end{proof}

For the plugin estimators, consider the Brenier map operator
$$
 \mathsf B:\mathcal P(\Y)\to L^2(\rho;\mathbb R^{d_y}),
 \qquad \mathsf B(\nu)=T_{\rho\to\nu}.
$$
If $\nu_k\to\nu$ in $W_2$, qualitative stability of optimal maps \cite[Corollary~5.23]{Villani2009} gives $\mathsf B(\nu_k)\to\mathsf B(\nu)$ in $\rho$-probability, since $\rho$ is absolutely continuous. All maps take values in the cube, so the convergence also holds in $L^2(\rho)$. Thus $\mathsf B$ is continuous and Borel measurable. The distribution estimator in Theorem~\ref{thm:main-wp} is measurable by construction. It follows by composition that the plugin estimator $\widehat T_x=\mathsf B(\widehat Q_x)$ is measurable as an $L^2(\rho)$-valued function.

\subsection{Transport families from inverse potentials}\label{app:inverse-potential-map}
For the lower bound, we parameterize the inverse Brenier map directly. The target density is then an explicit determinant, avoiding composition with an inverse map.

\begin{lemma}[Brenier chart based on inverse potentials]\label{lem:inverse-brenier-chart}
	Let $U,V\in C_c^2((0,1)^d)$ and suppose
	$$
		\|D^2U\|_{\mathrm{op},\infty}
		\vee
		\|D^2V\|_{\mathrm{op},\infty}
		\le\eta<1.
	$$
	Define
	$$
		S_U=\Id+\nabla U,
		\qquad
		T_U=S_U^{-1},
		\qquad
		q_U=\det(I+D^2U).
	$$
	Then the following statements hold.
	\begin{enumerate}[label=(\roman*)]
		\item $S_U$ is a diffeomorphism of the cube onto itself, $T_U$ is the gradient of a convex potential, and
		$$
			(T_U)_\#\rho=q_U\,dy.
		$$
		Moreover
		$$
			(1-\eta)^d\le q_U\le(1+\eta)^d,
			\qquad
			(1+\eta)^{-1}I\preceq DT_U\preceq(1-\eta)^{-1}I.
		$$
		\item The corresponding target laws satisfy
		\begin{equation}\label{eq:inverse-chart-hellinger}
			H^2(q_U\,dy,q_V\,dy)
			\lesssim_\eta
			\|D^2(U-V)\|_2^2.
		\end{equation}
		\item The Brenier maps satisfy the two-sided comparison
		\begin{equation}\label{eq:inverse-chart-map}
			\|T_U-T_V\|_{L^2(\rho)}^2
			\asymp_\eta
			\|\nabla U-\nabla V\|_2^2.
		\end{equation}
	\end{enumerate}
\end{lemma}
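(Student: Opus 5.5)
The plan is to treat $S_U$ as a perturbation of the identity and take each part in turn.

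\textbf{Part (i).} Since $U\in C_c^2((0,1)^d)$, the map $S_U=\nabla(\tfrac12|y|^2+U)$ equals the identity near $\partial[0,1]^d$. Its Jacobian $I+D^2U$ is symmetric with eigenvalues in $[1-\eta,1+\eta]$, so the potential $\tfrac12|y|^2+U$ is $(1-\eta)$-strongly convex on $\mathbb R^d$ after extending $U$ by zero. Hence $S_U$ is injective and a proper local diffeomorphism of $\mathbb R^d$, i.e.\ a global $C^1$ diffeomorphism. Because it is the identity outside the cube, it maps the cube onto itself: points outside are fixed, so bijectivity forces the cube to be invariant.

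The inverse $T_U$ is the gradient of the Legendre transform of a strongly convex function, so it is the gradient of a convex potential. By the inverse function theorem, $DT_U=(I+D^2U)^{-1}\circ T_U$, which gives $(1+\eta)^{-1}I\preceq DT_U\preceq(1-\eta)^{-1}I$. The change of variables $z=S_U(y)$ shows that $(T_U)_\#\rho$ has density $|\det DS_U(y)|=\det(I+D^2U(y))=q_U(y)$, and the bounds $(1\mp\eta)^d$ on $q_U$ follow from the eigenvalue bounds. By Brenier's theorem, $T_U$ is then the quadratic optimal map from $\rho$ to $q_U\,dy$.

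\textbf{Part (ii).} Since $q_U,q_V\ge(1-\eta)^d$, we have $H^2\le\int(q_U-q_V)^2/(\sqrt{q_U}+\sqrt{q_V})^2\lesssim_\eta\|q_U-q_V\|_2^2$. Write $A=D^2U$ and $B=D^2V$. The map $M\mapsto\det(I+M)$ is smooth on the compact set of symmetric matrices with operator norm at most $\eta$, so it is Lipschitz there: $|\det(I+A)-\det(I+B)|\le C_{\eta,d}|A-B|$ pointwise. Integrating gives \eqref{eq:inverse-chart-hellinger}.

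\textbf{Part (iii).} This is the main step. Use the fact that $S_U$ and $S_V$ are bi-Lipschitz with constants in $[1-\eta,1+\eta]$ and preserve the cube. First change variables $z=S_V(y)$ in $\|T_U-T_V\|_{L^2(\rho)}^2$ to obtain $\int|T_U(S_V(y))-y|^2\det(I+D^2V)\,dy$. The Jacobian weight lies in $[(1-\eta)^d,(1+\eta)^d]$, so this is comparable to $\int|T_U(S_V(y))-T_U(S_U(y))|^2dy$. Since $T_U$ is bi-Lipschitz with constants $(1\pm\eta)^{-1}$, this is in turn comparable to $\int|S_V(y)-S_U(y)|^2dy=\|\nabla U-\nabla V\|_2^2$, which is \eqref{eq:inverse-chart-map}. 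The obstacle is only to check that all points lie in the cube, where the Lipschitz bounds apply; this holds because every map involved preserves the cube. A global Lipschitz bound on $\mathbb R^d$ via the zero extension would also avoid the issue.
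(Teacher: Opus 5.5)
Your proposal is correct and follows essentially the same route as the paper: strong convexity of $\tfrac12|y|^2+U$ gives the global diffeomorphism and the Legendre-transform structure of $T_U$, Lipschitz continuity of the determinant on the $\eta$-ball together with the density floor gives the Hellinger bound, and a change of variables with bi-Lipschitz control gives the map comparison. The only cosmetic difference is in (iii): you substitute $z=S_V(y)$ and use the bi-Lipschitz bounds of $T_U$, whereas the paper substitutes $z=S_U(y)$ and uses those of $S_V$, so the two computations are mirror images of each other.
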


\begin{proof}
	Extend $U$ by zero to $\mathbb R^d$. Then $\Psi_U(y)=|y|^2/2+U(y)$ is globally $(1-\eta)$-strongly convex and quadratic outside a compact set. For every $z\in\mathbb R^d$, the function $\Psi_U(y)-z\cdot y$ has a unique minimizer, so $S_U=\nabla\Psi_U$ is a bijection of $\mathbb R^d$. Since $DS_U=I+D^2U$ is positive definite, the local inverse theorem makes $S_U$ a global $C^1$ diffeomorphism. Since $S_U$ fixes every point outside the cube, injectivity and surjectivity imply that it maps the cube onto itself; see also \cite[Lemma~38]{HutterRigollet2021}. Its inverse is $T_U=\nabla\Psi_U^*$ and is therefore a Brenier map. Change of variables gives
	$$
		(S_U)_\#(q_U\,dy)=\rho,
	$$
	which is equivalent to $(T_U)_\#\rho=q_U\,dy$. The eigenvalue bounds on $I+D^2U$ immediately give the stated determinant and curvature bounds.

	The determinant is Lipschitz on the compact matrix ball $\{A:\|A\|_{\mathrm{op}}\le\eta\}$. Together with uniform positivity, this gives
	$$
		H^2(q_U\,dy,q_V\,dy)
		\lesssim\|q_U-q_V\|_2^2
		\lesssim\|D^2(U-V)\|_2^2,
	$$
	and hence \eqref{eq:inverse-chart-hellinger}.

	For \eqref{eq:inverse-chart-map}, change variables $z=S_U(y)$ and put $y'=S_V^{-1}(S_U(y))$. Then
	$$
		S_V(y)-S_V(y')=\nabla V(y)-\nabla U(y).
	$$
	Uniform bi-Lipschitz control of $S_V$ gives
	$$
		|y-y'|\asymp_\eta|\nabla U(y)-\nabla V(y)|.
	$$
	Since $q_U\asymp1$,
	$$
		\|T_U-T_V\|_{L^2(\rho)}^2
		=\int|y-y'|^2q_U(y)\,dy
		\asymp_\eta\|\nabla U-\nabla V\|_2^2.
	$$
\end{proof}

\begin{proof}[Proof of Theorem~\ref{thm:brenier} (lower bound)]
For the map lower bound, fix model parameters in the ranges of Assumption~\ref{ass:brenier-class} and take $P_X$ uniform on $[0,1]^{d_x}$. Use the covariate bumps from \AppendixRef{app:replication}: $M_x\asymp h^{-d_x}$ smooth functions $0\le b_a\le1$ with supports of diameter $O(h)$ separated by distances comparable to $h$, plateau regions of volume comparable to $h^{d_x}$, and H\"older moduli bounded by $Ch^{-\beta}$. For each $a$, let $\theta_a$ index an independent copy of the potential family in the response dimension in Lemma~\ref{lem:multiscale-potential}, and set
\begin{equation}\label{eq:map-inverse-family}
	U_{\Theta,x}(y)=\sum_a b_a(x)U_{\theta_a}(y),
	\qquad
	S_{\Theta,x}=\Id+\nabla U_{\Theta,x},
	\qquad
	T_{\Theta,x}=S_{\Theta,x}^{-1},
\end{equation}
with target density
\begin{equation}\label{eq:map-inverse-density}
	q_{\Theta,x}(y)
	=\det\{I+D_y^2U_{\Theta,x}(y)\}.
\end{equation}
Set
$$
	B_J=\sum_{j=j_0}^J2^{j(d_y-2)}.
$$

We verify membership in the Brenier class. By Lemma~\ref{lem:multiscale-potential},
$$
	\|D_y^2U_{\Theta,x}\|_\infty
	\lesssim \varepsilon2^{Jd_y/2}.
$$
If the fixed quantity $\varepsilon2^{Jd_y/2}$ is sufficiently small, Lemma~\ref{lem:inverse-brenier-chart} supplies the density and curvature bounds. When $\tau>0$, choose an integer $\ell$ with $\tau<\ell\le K-2$. Because the determinant is polynomial in the Hessian and the Hessians stay in a fixed small matrix ball,
$$
	\|q_{\Theta,x}-1\|_\infty\lesssim \varepsilon2^{Jd_y/2},
	\qquad
	\|D_y^\ell(q_{\Theta,x}-1)\|_\infty
	\lesssim_\ell \varepsilon2^{J(d_y/2+\ell)}.
$$
As the perturbation is supported in the fixed interior cube $K_0$, the characterization by finite differences implies
\begin{equation}\label{eq:map-inverse-besov}
	\|q_{\Theta,x}-1\|_{B^\tau_{r,v}}
	\lesssim
	\varepsilon2^{J(\tau+d_y/2)},
	\qquad 1\le r,v\le\infty.
\end{equation}
Thus, with the convention $\tau=0$ when no additional smoothness in the response variable is imposed, it is enough to impose
\begin{equation}\label{eq:map-inverse-smoothness-constraint}
	\varepsilon2^{J(\tau+d_y/2)}\le c_0.
\end{equation}

The conditional $W_2$ regularity follows from the coupling induced by the common source and \eqref{eq:inverse-chart-map}. Lemma~\ref{lem:multiscale-potential} gives $\|\nabla U_{\theta_a}\|_2\lesssim\varepsilon\sqrt{B_J}$. If $x,x'$ are in the same active covariate bump,
$$
	W_2(Q_{\Theta,x},Q_{\Theta,x'})
	\le\|T_{\Theta,x}-T_{\Theta,x'}\|_2
	\lesssim
	\varepsilon\sqrt{B_J}\,h^{-\beta}|x-x'|^\beta.
$$
The same estimate holds when, say, $x'$ lies outside all bump supports, since $b_a(x')=0$. If both points lie outside, the conditional laws coincide. For points in different active bumps, the supports are separated by a distance comparable to $h$, and both maps lie within $C\varepsilon\sqrt{B_J}$ of the identity. The conditional H\"older bound therefore follows provided
\begin{equation}\label{eq:map-inverse-holder-constraint}
	\varepsilon\sqrt{B_J}\le c_1h^\beta.
\end{equation}

Flipping one coordinate in block $a$ changes $D_y^2U_{\Theta,x}$ by an $L^2$ term of size $O(\varepsilon b_a(x))$. Lemma~\ref{lem:inverse-brenier-chart}(ii) consequently gives the one-sample testing estimate
\begin{equation}\label{eq:map-inverse-testing}
	H^2(P_\Theta,P_{\Theta^{(a,j,k)}})
	\lesssim h^{d_x}\varepsilon^2.
\end{equation}
On the plateau of covariate bump $a$, Lemma~\ref{lem:inverse-brenier-chart}(iii), together with \eqref{eq:potential-pair-H1}, gives the following separation for arbitrary vertices:
\begin{equation}\label{eq:map-inverse-separation}
	\int_\X\|T_{\Theta,x}-T_{\Theta',x}\|_{L^2(\rho)}^2\,dx
	\gtrsim
	h^{d_x}\varepsilon^2
	\sum_{a,j,k}2^{-2j}\1\{\theta_{a,j,k}\ne\theta'_{a,j,k}\}.
\end{equation}
Decode any map estimator using the metric
$$
	d(T,T')=\left\{\int_\X\|T_x-T'_x\|_{L^2(\rho)}^2\,dx\right\}^{1/2}.
$$
If $nh^{d_x}\varepsilon^2$ is bounded by a sufficiently small constant, Hellinger tensorization and Lemma~\ref{lem:decoder} give a uniformly positive coordinate error probability. Since $M_x\asymp h^{-d_x}$ covariate blocks are present, \eqref{eq:map-inverse-separation} implies
\begin{equation}\label{eq:map-inverse-risk}
	\inf_{\widehat T}\sup_\Theta
	\E\int_\X\|\widehat T_x-T_{\Theta,x}\|_{L^2(\rho)}^2\,dx
	\gtrsim \varepsilon^2B_J.
\end{equation}

Take
$$
	\varepsilon\asymp(nh^{d_x})^{-1/2}
$$
and choose $J$ as large as allowed by \eqref{eq:map-inverse-smoothness-constraint}. As in \eqref{eq:potential-radius-cases},
$$
	\varepsilon\sqrt{B_J}
	\asymp a_{d_y,\tau}(nh^{d_x}).
$$
Choosing $h$ according to \eqref{eq:balancing-main} also enforces \eqref{eq:map-inverse-holder-constraint}, and gives
$$
	\varepsilon^2B_J\asymp A_n(\tau)^2.
$$
The fixed construction constants can be chosen small enough that the density upper bound and the prescribed map curvature constant hold uniformly. This proves the map lower bound in every response dimension with the same family based on inverse potentials.
\end{proof}

\section{Details of the numerical experiments}\label{supp:numerical}
\ifcombinedpaper This appendix\else This section\fi{} gives the numerical procedures for Sections~\ref{sec:model-main}, \ref{sec:conditionalization-main}, and~\ref{sec:response}. For the conditional estimation and multiscale illustrations, the reported repetitions use seeds independent of the preliminary simulations. The experiment code and simulation records are not publicly available.

Pointwise $95\%$ percentile bootstrap intervals use $4{,}000$ resamples of independent repetitions, preserving paired signs, all queries within a dataset, and pairing between estimators evaluated on the same repetitions. Rare covariate simulations are resampled within count strata before recombination with exact probabilities.

For losses evaluated using numerical lower and upper bounds, plotted curves use mean interval midpoints. The shaded interval runs from the $2.5$th percentile of the bootstrap mean lower endpoints to the $97.5$th percentile of the bootstrap mean upper endpoints. These intervals combine approximate pointwise bootstrap uncertainty with numerical bracketing. The probabilistic transport lower bounds in Subsection~\ref{subsec:supp-dimension} have an additional failure probability $\delta$, separate from the bootstrap level. Quadrature and resolution comparisons are sensitivity checks rather than uniform error bounds.

\subsection{Conditional estimation across response dimensions}\label{subsec:supp-dimension}

For Figure~\ref{fig:dimension-phase}, $X$ is uniform on $[0,1]$, and the conditional response density on $[0,1]^d$ is
\[
 q_x(y)=1+\theta(x)(2y_1-1),\qquad \theta(x)=0.8\sin(2\pi x).
\]
The remaining response coordinates are independent uniforms. These densities satisfy $0.2\le q_x\le1.8$. Projection onto the first coordinate and a coupling with identical remaining coordinates give $W_1(Q_x,Q_{x'})=|\theta(x)-\theta(x')|/6$, so the conditional laws are Lipschitz with a constant independent of $d$.

For $n=2^9,\ldots,2^{15}$, the grid estimator partitions the covariate interval into $K_d(n)$ equal cells, where
\[
 K_1(n)=\lceil n^{1/3}\rceil,\qquad
 K_2(n)=\left\lceil\{n/\log(en)\}^{1/3}\right\rceil,\qquad
 K_4(n)=\lceil n^{1/5}\rceil.
\]
It uses the empirical response distribution in the query cell, with the uniform distribution as the empty-cell fallback. The bandwidth for $d=4$ uses the unrestricted response rate; this example does not assess the gain from response smoothness. We use $2{,}048$, $128$, and $24$ independent repetitions in dimensions $1$, $2$, and $4$, respectively.

Each repetition uses an independent uniform query. Conditional on its cell $I$, the retained count has distribution $\operatorname{Bin}(n,1/K_d(n))$, and the retained responses are independent with density $1+\bar\theta_I(2y_1-1)$, where $\bar\theta_I=|I|^{-1}\int_I\theta(x)\,dx$. We simulate this count and the retained responses directly, which has the same law as retaining observations from a full i.i.d. training sample. The cell parameter is evaluated analytically, and the first response coordinate is generated by inverting $F_I(t)=(1-\bar\theta_I)t+\bar\theta_I t^2$. The estimator itself uses only the retained responses. Averaging the losses over independent training samples and queries estimates the expected integrated risk. Common random variables couple the simulations across sample sizes, and bootstrap resampling preserves each complete trajectory. No empty query cell occurred in the reported repetitions.

Panel (a) evaluates losses against the population conditional law $Q_x$. Panel (b) uses the same empirical cell distributions but evaluates their sampling error against $Q_I$, the population law pooled over the query cell. The population pooling bias is $W_1(Q_I,Q_x)=|\bar\theta_I-\theta(x)|/6$. For $K$ equal cells, its integrated value is asymptotic to $2/(15K)$ and contributes to the total error in panel (a).

Scalar losses use analytic quantile integration. In dimensions two and four, partition the target cube into cells of side $1/M$ and compute their exact masses, centroids $c_C$, and within-cell variances $v_C$. A balanced transport linear program with source atoms $Y_i$ and costs $\sqrt{|Y_i-c_C|^2+v_C}$ gives an upper bound: independent lifting within each target cell and Jensen's inequality produce a feasible continuous transport plan, as in Subsection~\ref{subsec:supp-critical}.

For the lower bound, target dual values $b_C$ define $a_i=\min_C\{|Y_i-c_C|-b_C\}$. These values are $1$-Lipschitz on the source atoms, and $\phi(z)=\max_i\{a_i-|z-Y_i|\}$ is a globally $1$-Lipschitz extension. Thus $N^{-1}\sum_i a_i-\int\phi\,dQ_x$ is a lower bound when the cell contains $N$ observations. To evaluate the integral, partition the target cube into strata of side $1/H$, with exact probabilities $w_S=Q_x(S)$, and draw one independent observation $Z_S$ from each conditional law $Q_x(\cdot\mid S)$. The draws are independent of the training data and the computed potential. Subtracting
\[
 \left\{\frac{d\log(1/\delta)}{2H^2}\sum_S w_S^2\right\}^{1/2},
 \qquad \delta=10^{-7},
\]
from $N^{-1}\sum_i a_i-\sum_S w_S\phi(Z_S)$ gives a lower bound with failure probability at most $\delta$, by Hoeffding's inequality and the diameter $\sqrt d/H$ of each stratum. The same construction with $Q_I$ in place of $Q_x$ gives the sampling loss bounds in panel (b), using independent integration draws.

We use $(M,H)=(128,256)$ in dimension two and $(12,16)$ in dimension four. Primal and dual objectives are checked to agree within $10^{-7}$, and a rounding allowance of $10^{-9}$ is added to the upper bound and subtracted from the lower bound. Scalar losses are checked against independent numerical quadrature, and the continuous-target bounds are checked on examples with known transport costs. For panel (a) at $n=2^{15}$, the difference between the mean upper and lower bounds is $3.0\%$ and $15.7\%$ of the mean upper bound in dimensions two and four, respectively.

Panel (b) multiplies each mean sampling loss by $\sqrt{m_{n,d}}$, where $m_{n,d}=n/K_d(n)$, and normalizes it at $n=512$. Writing $\bar L_n^\ast$ and $\bar U_n^\ast$ for the bootstrap mean lower and upper endpoints, the normalized interval endpoints are
\[
 \sqrt{\frac{m_{n,d}}{m_{512,d}}}\,
 \frac{\bar L_n^\ast}{\bar U_{512}^\ast},
 \qquad
 \sqrt{\frac{m_{n,d}}{m_{512,d}}}\,
 \frac{\bar U_n^\ast}{\bar L_{512}^\ast}.
\]
Their $2.5$th and $97.5$th percentiles define the shaded band. At the normalization point, the curve and both endpoints are set to one. The reference curve is $\{\log(e m_{n,2})/\log(e m_{512,2})\}^{1/2}$. These fixed-law simulations do not determine the full conditional minimax logarithmic exponent.

\subsection{Scalar distribution estimation}\label{subsec:supp-scalar}

The grid estimator uses $\lceil n^{1/3}\rceil$ equal covariate cells and the empirical response distribution in each occupied cell. The empirical nearest neighbor estimator uses
\[
 k=\min\{n,\max(2,\lceil Cn^{2/3}\rceil)\},
 \qquad C=4.
\]

The occupancy estimator divides the sample into two halves. For each query, the first half selects the deepest dyadic cell at level $j$ such that every ancestor, at its respective level $l\le j$, contains at least $\lceil0.5\,2^{2l}\rceil$ observations. The empirical distribution of second half responses in the selected cell is then used for estimation. The empirical estimators use the uniform response distribution when an estimation cell is empty.

For the family in \eqref{eq:num-scalar-law}, the conditional distribution function is
\[
 F_{x,s}(y)=(1-\alpha sx)y+\alpha sxy^2.
\]
Responses are generated by inversion, with common uniform random variables for the two signs. For uniform covariates, we use $256$ independent repetitions per sample size, each evaluating both signs, with the nearest neighbor constant fixed beforehand. Empirical $W_1$ losses use analytic quantile integration, checked independently through the distribution functions.

Covariate integration uses exact probabilities for the two-point distribution. Under uniform covariates, grid and occupancy losses use $16$ Gauss points on each localization interval, and nearest neighbor losses use $2{,}048$ midpoints. Doubling these resolutions for the first $12$ datasets at each sample size changes the mean fourth-moment risk by at most $0.1945\%$ in the cases checked.

For the rare covariate design, let $K=\sum_{i=1}^n\1\{X_i=1\}$ count observations at the rare atom in the full sample, before sample splitting. For the empirical estimators, we simulate datasets conditional on $K=k$, for $k=0,\ldots,16$, placing the $k$ rare observations uniformly among sample indices. Each sample size uses $5{,}632$ conditional repetitions: $512$ for each $k=0,\ldots,8$ and $128$ for each $k=9,\ldots,16$. Conditional mean losses are weighted by exact $\operatorname{Bin}(n,1/n)$ probabilities. The omitted tail contribution is bounded by the maximal loss multiplied by $\Pr(K>16)$. The calculation is checked against $1{,}024$ independent samples from the original joint law per sample size.

The normalizing flow has eight spline intervals on $[0,1]$ and a uniform base distribution. A network with two hidden layers of 16 units and SiLU activations takes $2x-1$ as input and supplies the spline widths, heights, and derivatives, each bounded below by $10^{-3}$. The inverse spline generates responses.

Each fit starts from the uniform distribution. We use $80\%$ of the observations for likelihood fitting and the remaining $20\%$ for validation. Adam uses batches of 256 observations sampled with replacement, 1,600 updates, an $\ell_2$ penalty of $10^{-3}$, and a learning rate that decreases by a cosine schedule from $0.002$ to $2\times10^{-5}$. Gradient norms are clipped at 5. The initial model and checkpoints at every 40 updates are compared by validation likelihood. The two signs are fitted separately with paired initializations, data splits, and minibatch indices.

We estimate flow risks from 256 paired datasets per sample size under uniform covariates. Under rare covariates, we use 128 repetitions conditional on $K=k$ for each $k=0,\ldots,4$, 64 for each $k=5,\ldots,8$, and 16 for each $k=9,\ldots,16$, giving 1,024 stratified datasets per sample size. The weighting and tail treatment are the same as for the empirical estimators.

For flow losses, we integrate the absolute difference between fitted and true cumulative distribution functions using 64 Gauss nodes on each spline interval. Covariate integration uses 64 Gauss nodes under uniform covariates and exact masses for the two-point design. We double both resolutions for the first eight uniform datasets at each sample size and the first eight datasets in each rare stratum $K=0,1,2$. The largest relative change in a mean fourth-moment risk is $0.0003\%$ in these checks. When $K=0$, the two signs give identical training data and, under the paired training randomness, identical fitted flows.

Under uniform covariates, the sharp decrease in the flow's fourth-moment risk in Figure~\ref{fig:num-risk}(c) coincides with validation selecting the uniform initialization less often: it is selected in $53.1\%$ of fits at $n=256$, $2.5\%$ at $n=2{,}048$, and none at $n=4{,}096$.

For the two-point distribution, write $\ell_a=W_1(\widehat Q_a,Q_{a,s})$, $a\in\{0,1\}$. With $\E$ including the average over signs, the occupancy risk decomposes as
\begin{equation}\label{eq:num-risk-decomposition}
 \mathcal R_{n,1,4}
 =(1-n^{-1})\E[\ell_0^4]
 +n^{-1}\E[\ell_1^4\1\{K=0\}]
 +n^{-1}\E[\ell_1^4\1\{K\ge1\}].
\end{equation}
On $K=0$, the data cannot distinguish the two conditional laws at the rare atom. Their $W_1$ distance is $\alpha/3$, so the triangle inequality and convexity imply the lower bound
\begin{equation}\label{eq:num-missed-reference}
 B_n=\frac1n\left(1-\frac1n\right)^n\left(\frac\alpha6\right)^4
\end{equation}
for the second term in \eqref{eq:num-risk-decomposition}. An estimator using the uniform distribution at the unobserved atom attains this bound. An estimator that pools observations from a parent cell may incur a larger loss; accordingly, the dotted curve in Figure~\ref{fig:num-risk-decomposition} is a lower bound rather than the measured contribution.

\begin{figure}[tp]
 \centering
 \includegraphics[width=0.45\textwidth]{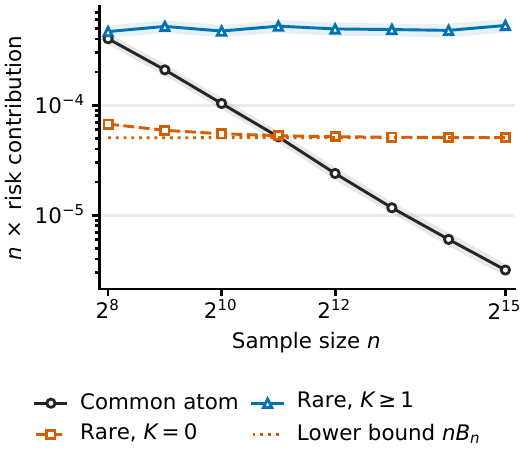}
 \small
 \caption{Fourth-moment risk of the occupancy estimator under rare covariates. The three contributions in \eqref{eq:num-risk-decomposition} are multiplied by $n$. Both axes are logarithmic. The dotted curve is the lower bound $nB_n$ from \eqref{eq:num-missed-reference}. Shaded bands are pointwise $95\%$ bootstrap intervals, using $5{,}632$ stratified repetitions at each sample size.}
 \label{fig:num-risk-decomposition}
\end{figure}

At $n=2^{15}$, the rare atom contributes $99.46\%$ and $98.87\%$ of the fourth-moment risks of the occupancy estimator and normalizing flow, respectively. The corresponding contributions from the unobserved rare atom are $8.69\%$ and $36.39\%$ of the total risks. The fitted flow laws at the rare atom remain close to uniform even when that atom is observed, so their risk also includes estimation error beyond the missed-event contribution.

\subsection{Multiscale transport separation}\label{subsec:supp-critical}

For the constructions in Figure~\ref{fig:num-critical}, let $\psi_k$ be the nonconstant orthonormal cosine modes on $[0,1]^2$, and define
\[
 I_j=\{k:2^{j-1}\le\max(k_1,k_2)<2^j\},
 \qquad \varepsilon_J=0.025\,2^{-2J}.
\]
For each $J\in\{1,2,3,4,5\}$, we generate $32$ independent pairs of sign vectors and form
\[
 q^{(a)}=1+\varepsilon_J\sum_{k\in I}\sigma_k^{(a)}\psi_k,
 \qquad a\in\{1,2\}.
\]
The finest level construction uses $I=I_J$, and the multiscale construction uses $I=\bigcup_{j=1}^J I_j$. The two constructions share signs on their common modes and coincide at $J=1$. The modes have zero mean, so the densities integrate to one. The cosine family permits exact cell averages in the transport calculations.

Both constructions satisfy $0.95\le q^{(a)}\le1.05$ and $\varepsilon_J2^j|I\cap I_j|^{1/2}\le0.025\sqrt3/2$ for $1\le j\le J$, giving positivity and the spectral block bound at $\tau=1$.

We compute exact cell averages on an $M^2$ grid, with $M=256$ for $J\le4$ and $M=512$ for $J=5$, and double $M$ for the first $8$ pairs. For $g=q^{(1)}-q^{(2)}$, let $z_C$ be the center of cell $C$ and set
\[
 g_M=\sum_C\left(\int_Cg(y)\,dy\right)\delta_{z_C}.
\]
Moving the signed mass in each cell to its center gives
\[
 \|g\,dy-g_M\|_{\KR}
 \le\frac{\sqrt2}{2M}\|g\|_1
 \le\frac{\sqrt2}{2M}\|g\|_2
 =\frac{\sqrt2}{2M}\|\delta\|_2=:\eta,
\]
where $\delta$ contains the differences in the orthonormal cosine coefficients, including $\varepsilon_J$.

Let $[L_G,U_G]$ be dual and primal bounds for transport of $g_M$ on the grid with eight neighbors. Horizontal and vertical edges have cost $1/M$, and diagonal edges have cost $\sqrt2/M$. The Euclidean distance $d_E$ and graph distance $d_G$ satisfy
\[
 d_E\le d_G\le\xi d_E,
 \qquad \xi=\sqrt{1+(\sqrt2-1)^2}.
\]
Indeed, for coordinate displacements $a\ge b\ge0$, the graph distance is $a+(\sqrt2-1)b$; Cauchy--Schwarz gives the upper comparison. Thus $L_G/\xi$ and $U_G$ bound the Euclidean transport cost of $g_M$.

We also compute bounds $[L_E,U_E]$ by grouping the positive and negative masses of $g_M$. After normalizing each part to a probability measure, we recursively split the group with largest mass-weighted spatial variance along its largest-variance coordinate at a weighted median, using at most $512$ groups for each measure. Each group is represented by its mass, centroid, and mean squared distance from the centroid. If $c_i,d_j$ are the centroids, $v_i,w_j$ the within-group variances, and $\pi_{ij}$ a feasible transport plan between group masses, independent lifting within each group pair and Jensen's inequality give the upper bound
\[
 \sum_{i,j}\pi_{ij}\sqrt{|c_i-d_j|^2+v_i+w_j}.
\]
For the lower bound, target dual values $b_j$ from the centroid transport problem define the globally $1$-Lipschitz function $\phi(z)=\min_j\{|z-d_j|-b_j\}$. We integrate $\phi$ against the original discrete measures. The bounds are multiplied by the common transported mass. Atoms of normalized mass at most $10^{-15}$ are omitted during grouping; their total mass is charged at the cube diameter, and residual and rounding allowances are added to the upper bound and subtracted from the lower bound.

Combining the discrete bounds and the signed discretization error yields
\begin{equation}\label{eq:num-critical-bracket}
 \max\{0,L_G/\xi-\eta,L_E-\eta\}
 \le W_1(q^{(1)}\,dy,q^{(2)}\,dy)
 \le \min\{U_G+\eta,U_E+\eta\}.
\end{equation}
Optimization uses a relative gap target of $1\%$ and a limit of $12{,}000$ iterations; the corrected bounds are retained even when the target gap is not achieved. Graph divergence residuals are corrected using a discrete Neumann Poisson solve, with any remaining residual charged at the graph diameter. Dual potentials are rescaled to satisfy the edge constraints. Centroid transport plans are accepted only when the maximum marginal residual is at most $10^{-8}$, with the resulting allowance included in the bounds. Costs, residuals, and feasibility are checked in double precision.

Independent checks against exact linear programming solutions cover $6$ random cases and $2$ multiscale cases. The largest relative graph primal--dual gap among the primary runs is $8.4\%$. In the doubled-grid checks, interval midpoints change by at most $2.4\%$, and all corresponding intervals overlap.

For $J=2,3,4,5$, the intervals for the mean difference in separation between the multiscale and finest level constructions, normalized by $\varepsilon_J$, lie strictly above zero after incorporating both numerical bounds and bootstrap uncertainty. At $J=1$, the multiscale and finest level constructions coincide for each paired sign realization, so the difference in their true separations is zero. The positive width of its numerical interval comes from subtracting the separately computed lower and upper bounds.

\FloatBarrier

\bibliographystyle{siamplain}
\bibliography{references}
\end{document}